\numberwithin{equation}{section}
\newtheorem{theorem}{Theorem}
\numberwithin{theorem}{section}
\newtheorem{proposition}[theorem]{Proposition}
\newtheorem{lemma}[theorem]{Lemma}
\newtheorem{corollary}[theorem]{Corollary}
\newtheorem{conjecture}[theorem]{Conjecture}
\theoremstyle{definition}\newtheorem{definition}[theorem]{Definition}
\theoremstyle{definition}\newtheorem{remark}[theorem]{Remark}
\theoremstyle{definition}\newtheorem{example}[theorem]{Example}
\theoremstyle{definition}\newtheorem*{notation*}{Notation}
\theoremstyle{definition}\newtheorem*{convention*}{Convention}
\theoremstyle{definition}\newtheorem{observation}{Observation}
\theoremstyle{definition}\newtheorem*{acknowledgment*}{Acknowledgments}
\newcommand{\N}{\mathbb{N}}
\newcommand{\R}{\mathbb{R}}
\newcommand{\C}{\mathbb{C}}
\newcommand{\A}{\mathcal{A}}
\newcommand{\h}{\mathcal{H}}
\renewcommand{\H}{\mathcal{H}}
\newcommand{\K}{\mathcal{K}}
\newcommand{\E}{\mathcal{E}}
\newcommand{\Q}{\mathbb{Q}}
\newcommand{\G}{\mathcal{G}}
\newcommand{\T}{\mathbb{T}}
\newcommand{\M}{\mathcal{M}}
\renewcommand{\L}{\mathcal{L}}
\newcommand{\hpi}{\mathcal{H}_{\pi}}
\newcommand{\B}{\mathcal{B}}
\newcommand{\bh}{\mathcal{B}(\mathcal{H})}
\newcommand{\bk}{\mathcal{B}(\mathcal{K})}
\newcommand{\bg}{\mathcal{B}(\mathcal{G})}
\newcommand{\sqd}{D^{1/2}}
\newcommand{\sha}{S_{\mathcal{H}}(\mathcal{A})}
\newcommand{\copa}{\pi(\mathcal{A})'}
\newcommand{\cst}{C^*}
\DeclareMathOperator{\id}{id}
\DeclareMathOperator{\Span}{span}
\DeclareMathOperator{\ospan}{\overline{span}}
\DeclareMathOperator{\ran}{\mathscr{R}}
\DeclareMathOperator{\Lat}{Lat}
\DeclareMathOperator{\Alg}{Alg}
\begin{document}

\title{$C^\ast$-extreme maps and nests}

\author{B.V. Rajarama Bhat}
\email{bhat@isibang.ac.in}

\author{Manish Kumar}
\email{manish\_rs@isibang.ac.in}

\address{Indian Statistical Institute, Stat-Math. Unit, R V College Post, Bengaluru 560059, India}

\begin{abstract}
    The generalized state space $ S_{\mathcal{H}}(\mathcal{\mathcal{A}})$ of all unital completely positive (UCP) maps on a unital $C^*$-algebra $\mathcal{A}$ taking values in the algebra $\mathcal{B}(\mathcal{H})$ of all bounded operators on a  Hilbert space $\mathcal{H}$, is a $C^\ast$-convex set.  In this paper, we establish a connection between $C^\ast$-extreme points of $S_{\mathcal{H}}(\mathcal{A})$ and a factorization property of certain algebras associated to the UCP maps. In particular, the factorization property of some nest algebras is used to give a complete characterization of those $C^\ast$-extreme maps which are direct sums of pure UCP maps. This significantly extends a result of Farenick and Zhou [Proc. Amer. Math. Soc. 126 (1998)]  from finite to infinite dimensional Hilbert spaces. Also it is shown that  normal $C^\ast$-extreme maps on type $I$ factors are direct sums of normal pure UCP maps if and only if an associated algebra is reflexive. Further, a Krein-Milman type theorem is established for $C^\ast$-convexity of the set $ S_{\mathcal{H}}(\mathcal{A})$ equipped with bounded weak topology, whenever $\mathcal{A}$ is a separable $C^\ast$-algebra or it is a type $I$ factor. As an application, we provide a new proof of a classical  factorization result on operator-valued Hardy algebras.
\end{abstract}


\subjclass[2020]{46L30, 47L35, 46L55}
\keywords{Unital Completely
positive map, $C^*$-convexity, $C^*$-extreme point, pure map, nest,
nest algebra, factorization property, Krein-Milman theorem}

\maketitle

\section{Introduction}\label{section:introduction}

Quantization in functional analysis and search for the right
noncommutative analogue of various classical concepts have
attracted considerable amount of interest among operator
algebraists.  Several different notions of quantizations of
convexity have appeared in the literature over the decades, among
which we cite a few \cite{DaKe, EfWi, FaMo, FaPlSm, Fu, LoPa,  Ma2}.
One such natural extension
is $\cst$-convexity, where the idea is to replace scalar valued convex
coefficients by $\cst$-algebra valued coefficients. This particular
notion has been explored in different contexts. The initial
definition was for subsets of $\cst$-algebras \cite{LoPa}.
Subsequently it has been  extended on similar lines for subsets of
bimodules over $\cst$-algebras \cite{Ma2}, for spaces of unital
completely positive maps \cite{FaMo}, and for  positive operator
valued measures \cite{FaPlSm}. In all these frame-works,  one of the
primary goals has been to identify $\cst$-extreme points of the
corresponding $\cst$-convex sets and look for an analogue of
Krein-Milman theorem. Our focus in this paper is the $\cst$-convexity structure of
the generalized state space $\sha$ of all unital completely positive
(UCP) linear maps from a unital $\cst$-algebra $\A$ to $\bh$, the
algebra of all bounded operators on a separable Hilbert space $\h$.
Generalized state spaces are thought of as quantizations of usual
state spaces.

Motivated by  the ideas of Loebl and Paulsen \cite{LoPa}, the notion
of $\cst$-convexity and $\cst$-extreme points of $\sha$ was  defined and  studied by
Farenick and Morenz \cite{FaMo}.
They  developed some general
properties, however the main focus remained on the case when $\h$ is a finite
dimensional Hilbert space,  that is the case, $\h = \C ^n$ for some
$n\in \N.$ They gave a complete description of $\cst$-extreme points
of $ S_{\C^n}(\A)$, whenever $\A$ is a commutative $\cst$-algebra or
a finite dimensional matrix algebra.
Following this work, Farenick and Zhou \cite{FaZh} came up with an
abstract characterization of  $\cst$-extreme points via Stinespring
decomposition, using which the structure of all $\cst$-extreme
points of $S_{\C^n}(\A)$ was illustrated for  an arbitrary
$\cst$-algebra $\A$.  It was shown  that all such
maps are direct sums of pure UCP maps satisfying some `nested'
properties.

In the case when the $\cst$-algebra $\A$ is commutative and the
Hilbert space $\h$  is arbitrary dimensional, the techniques of
positive operator valued measures  were exploited by Gregg \cite{Gr}
to study  necessary conditions for $\cst$-extreme points of $\sha$.
Banerjee et al. \cite{BaBhMa} have recently used this  approach to
show in particular that all $\cst$-extreme points of $\sha$ are
$*$-homomorphisms whenever $\A$ is a commutative $\cst$-algebra with
countable spectrum. The purpose of this article is to undertake a systematic investigation of the structure  of $\cst$-extreme points in $\sha$ for arbitrary (not-necessarily commutative) $\cst$-algebras $\A .$

We begin with a
discussion of some abstract characterizations of $\cst$-extreme
points of $\sha$ in Section \ref{section:general properties}. Making
use of a result from \cite{FaZh}, we present a connection between
$\cst$-extreme points and factorization property of certain
subalgebras  in some von Neumann algebras (see Definition
\ref{definition of algebras with factorization} and Corollary \ref{a
factorization property of algebras coming out of C* extreme point}).
  Our
main insight is that characterizations of $C^*$-extreme points
naturally lead to some nests of subspaces and we can invoke some
well-known factorization results of the associated nest algebras. In
other words we develop  a strong mathematical link between the
theory of $C^*$-extreme points of UCP maps and that of nest
algebras.
 The theory of nest algebras
and their factorization property has a long history, for which we
refer the readers to the beautiful book by Davidson \cite{Da}.

One of our main results is Theorem \ref{direct sum of pure UCP maps}
in Section \ref{section:direct sum of pure UCP maps}, which
generalizes significantly a result of \cite{FaZh} from finite
dimensional Hilbert spaces  to infinite dimensions. More precisely,
a complete description is given for countable direct sums of  pure
UCP maps to be $\cst$-extreme points. This result pinpoints  a
refinement needed to a sufficiency condition suggested by
\cite{FaZh} for $\cst$-extremity of direct sums of pure UCP maps.

 Section \ref{section:normal UCP maps} is devoted to the study of normal
$\cst$-extreme maps on  type $I $ factors.   The structure of normal
UCP maps on these algebras is well-known.  This knowledge helps us
to arrive at necessary and sufficient conditions for normal
$\cst$-extreme  maps to be direct sums of normal pure UCP maps
(Theorem \ref{normal C*-extreme maps and iff criteria}). In the
course of the proof, we apply a fact recently proved by the authors
\cite{BhMa} that all reflexive algebras having factorization are
nest algebras.

A fundamental result in classical convexity theory is Krein-Milman theorem for compact convex sets in locally convex topological vector spaces.
Naturally, an analogue of Krein-Milman theorem is  expected  for
quantized convexity under appropriate topology. Several researchers
have been quite successful in reaching this goal under varying
set-ups, particularly when the operator-valued coefficients are taken from finite dimensional $\cst$-algebras: see for example, for compact $\cst$-convex subsets of $M_n$ \cite{Mo}, for
compact matrix convex sets in locally convex spaces \cite{WeWi}, and
for weak$^*$-compact $\cst$-convex sets in hyperfinite factors \cite{Ma}.
However there are instances where such theorems fail to hold.  In fact Magajna \cite{Ma2} produced an example of a weak$^*$-compact $\cst$-convex subset of an operator $\B$-bimodule over a commutative von Neumann algebra $\B$ which does not even possess any $\cst$-extreme point.
Nevertheless, for  $\cst$-convex spaces of UCP maps equipped with
bounded weak topology, some promising  results have appeared in
restricted cases. More specifically,    Krein-Milman type theorems are
known to be true  for $\cst$-convexity of the space $S_{\h}(\A)$ in
the following two cases: (1)  when $\A$ is an arbitrary
$\cst$-algebra and $\h$ a finite dimensional Hilbert space
\cite{FaMo}, (2) when $\A$ is a commutative $\cst$-algebra  and $\h$
has arbitrary dimension \cite{BaBhMa}.  We extend this line of
research in Section \ref{section:Krein-Milman theorem}, by showing a
Krein-Milman type theorem for $\cst$-convexity of $\sha$, whenever
$\h$ is infinite dimensional and separable, and $\A$ is a separable
$\cst$-algebra or a type $I$ factor (Theorem \ref{Krein-Milman type
theorem}). Whether the same holds for $\sha$ in full generality
remains as an open question.

Finally in Section \ref{section:examples and application}, we produce a number of examples of $\cst$-extreme maps, and consider their applications. At first, behaviour of $\cst$-extreme points  under minimal tensor product of UCP maps are examined to derive more $\cst$-extreme points.
Further, examples of certain $\cst$-extreme points in $ S_\h(C(\T))$ are seen (here $C(\T)$ is the space of all continuous functions on the unit circle $\T$), using which we provide a new proof of a known classical result of Szeg\" o and its operator valued analogue about factorization property of operator valued Hardy algebras. Lastly factorization property of some well-known algebras are utilized to produce  examples of  UCP maps, some of which are $\cst$-extreme  and some  are not.

The following convention will be followed throughout the paper.
All Hilbert spaces on which  completely positive maps act  are complex and separable, where the inner product is assumed to be linear in the second variable.
For Hilbert spaces $\h$ and $\K$, $\B(\h,\K)$ denotes the space of all bounded linear operators from $\h$ to $\K$.
We denote by $\bh$ the algebra of all bounded operators on $\h$.  By subspaces, projections and operators, we mean closed subspaces, orthogonal projections and bounded operators respectively. For any subset $E$ of $\h$, $[E]$ denotes the closed  subspace generated by $E$.  The orthogonal complement of a subspace $F$ in a subspace  $E $ will be denoted by  $E\ominus F$.  If $\{E_i\}_{i\in\Lambda}$ is a collection of subspaces of $\h$, then we write $\wedge_{i\in\Lambda}E_i=\cap_{i\in\Lambda}E_i$ and  $\vee_{i\in\Lambda}E_i=[\cup_{i\in\Lambda}E_i]$.  For any subspace $E$, we denote by $P_E$ the projection onto  $E$. All $\cst$-algebras considered will be assumed to contain identity, which we  denote by $1$ or by $I_\h$ if the Hilbert space $\h$ on which the algebra acts  needs to be specified.
For any self-adjoint subalgebra $\M$ of $\B(\h)$, we denote by $\M'$ the commutant of $\M$ in $\bh$.

\section{General properties of $C^\ast$-extreme points}\label{section:general properties}

We begin with some preliminaries on the theory of completely
positive maps and their dilations. Let $\A$ be a unital
$\cst$-algebra, and $\h$  a separable Hilbert space. A linear map
$\phi:\A\to\bh$ is called {\em positive} if $\phi(a)\geq0$ in $\bh$
whenever $a\geq 0$ in $\A$, and $\phi$ is called {\em completely
positive (CP)} if $\phi\otimes \id_n:\A\otimes M_n\to\B(\h)\otimes
M_n$ is positive for all $n\geq 1$. Here $\id_n$ is the identity map
on the $\cst$-algebra $M_n$ of $n\times n$ complex matrices. A
unital $*$-homomorphism  from $\A$ to $\bh$ is called a {\em
representation}. Note that a map of the form $a\mapsto V^*\pi(a)V$,
$a\in\A$, for a representation $\pi$ on $\A$ and an appropriate
operator $V$, is a CP map on $\A$.

Conversely, the well-known Stinespring dilation theorem says: if $\phi:\A\to\bh$ is a CP map, then there is a triple $(\pi,V,\hpi)$ of  a Hilbert space $\hpi$, an operator $V\in\B(\h,\hpi)$ and a representation $\pi:\A\to\B(\hpi)$ such that $\phi(a)=V^*\pi(a)V$ for all $a\in\A$, and satisfies the minimality condition that $\hpi=[\pi(\A)V\h]$.
Moreover, any such triple is unique upto unitary equivalence. We call $(\pi,V,\hpi)$ the {\em minimal Stinespring triple} for $\phi$. Note that $V$ is an isometry if and only if $\phi$ is unital (i.e. $\phi(1)=I_\h$).

We remark here that although the Hilbert space  on  which a CP map acts is  assumed to be separable,  the Hilbert space $\hpi$ in the minimal Stinespring triple $(\pi,V,\hpi)$ may not be separable. However, when the $\cst$-algebra $\A$ is also separable,  $\hpi$ is separable. See \cite{Ar69, Pa, Pi} for more details on the theory of  CP maps. We fix the following notation for the rest of the article.

\begin{notation*}
We denote by $\sha$  the collection of all unital completely positive (UCP) maps from a unital  $\cst$-algebra $\A$ to $\bh$.
\end{notation*}

The set $\sha$ is called {\em generalized state space} on the
$\cst$-algebra $\A$. Note that $S_\C(\A)$ is the usual state space
of $\A$. The set $\sha$ possesses  both linear as well as other
quantized convexity structure. In particular, the $\cst$-convexity
structure of $\sha$ has played a very important role in understanding
general theory of completely positive maps and various concepts
associated with them, and this is the main theme of this paper.

We  list two very important theorems on completely positive maps proved by Arveson \cite{Ar69}. For any two completely positive maps $\phi,\psi:\A\to\bh$, we say $\psi\leq \phi$, if $\phi-\psi$ is completely positive. Below we state a Radon-Nikodym type theorem (Theorem 1.4.2, \cite{Ar69}) for comparison of two completely positive maps.

\begin{theorem}[Radon-Nikodym type Theorem]\label{Radon-nikodym type theorem}
Let $\phi:\A\to\bh$ be a completely positive map with minimal Stinespring triple $(\pi,V,\hpi)$. Then a completely positive map $\psi:\A\to\bh$ satisfies $\psi\leq \phi$ if and only if there is a positive contraction $T\in\copa$ such that $\psi(a)=V^*T\pi(a)V$ for all $a\in\A$.
\end{theorem}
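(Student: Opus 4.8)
The plan is to establish the two implications separately; the ``only if'' direction carries all the content. For the ``if'' direction, suppose $\psi(a)=V^*T\pi(a)V$ for a positive contraction $T\in\copa$. Since $T$ commutes with $\pi(\A)$, the functional calculus gives that $T^{1/2}$ and $(I_{\hpi}-T)^{1/2}$ do too, so $\psi(a)=(T^{1/2}V)^*\pi(a)(T^{1/2}V)$ and, using $\phi(a)=V^*\pi(a)V$, $(\phi-\psi)(a)=V^*(I_{\hpi}-T)\pi(a)V=((I_{\hpi}-T)^{1/2}V)^*\pi(a)((I_{\hpi}-T)^{1/2}V)$. Both expressions are compressions of the representation $\pi$, hence completely positive maps on $\A$, so $\psi$ is completely positive and $\psi\leq\phi$.

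For the ``only if'' direction, assume $\psi\leq\phi$, and recover $T$ by a Riesz representation argument applied to a sesquilinear form on $\hpi$ encoding $\psi$. On the dense subspace $\pi(\A)V\h$ of $\hpi$, define, for $\xi=\sum_{i}\pi(a_i)Vh_i$ and $\eta=\sum_{j}\pi(b_j)Vk_j$,
\[
B(\xi,\eta)=\sum_{i,j}\langle\psi(b_j^*a_i)h_i,k_j\rangle .
\]
The crucial point -- and the only place where complete positivity, as opposed to mere positivity, is indispensable -- is the pair of estimates $0\leq B(\xi,\xi)\leq\langle\xi,\xi\rangle$. The left inequality holds because $[a_i^*a_j]_{i,j}\in M_n(\A)$ is positive and $\psi$ is completely positive, so that $B(\xi,\xi)=\sum_{i,j}\langle\psi(a_j^*a_i)h_i,h_j\rangle\geq0$; the right one holds because $\langle\xi,\xi\rangle$ is given by the same double sum with $\phi$ in place of $\psi$ (using $\phi=V^*\pi(\cdot)V$), whence $\langle\xi,\xi\rangle-B(\xi,\xi)$ is the corresponding double sum for the completely positive map $\phi-\psi$, hence nonnegative. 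The first estimate makes $B$ a positive form, so it obeys the Cauchy--Schwarz inequality; the second then forces $B$ to be well defined on $\pi(\A)V\h$ -- any formal sum representing the zero vector contributes zero -- and bounded there with norm at most one, hence to extend to a bounded positive sesquilinear form on all of $\hpi$.

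By the Riesz representation theorem there is $T\in\bhpi$ with $\langle T\xi,\eta\rangle=B(\xi,\eta)$ for all $\xi,\eta\in\hpi$, and positivity of $B$ together with its norm bound makes $T$ a positive contraction. Two routine density computations finish the proof. First, for $c\in\A$ one checks that $\langle T\pi(c)\pi(a)Vh,\pi(b)Vk\rangle$ and $\langle\pi(c)T\pi(a)Vh,\pi(b)Vk\rangle$ both equal $\langle\psi(b^*ca)h,k\rangle$, so $T\pi(c)=\pi(c)T$ on a dense subspace and hence $T\in\copa$. Second, taking $b=1$ in the formula for $B$ gives $\langle V^*T\pi(a)Vh,k\rangle=\langle T\pi(a)Vh,Vk\rangle=\langle\psi(a)h,k\rangle$, i.e. $\psi(a)=V^*T\pi(a)V$. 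I expect the only genuine obstacle to be the justification of well-definedness and boundedness of $B$; the point to emphasize there is that it truly needs complete positivity of both $\psi$ and $\phi-\psi$, since positivity alone would not deliver the matrix positivity invoked above, whereas everything after the Riesz step is purely mechanical.
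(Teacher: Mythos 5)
Your argument is correct and complete: both the easy direction (via $T^{1/2}$ and $(I_{\hpi}-T)^{1/2}$ commuting with $\pi(\A)$) and the construction of $T$ from the sesquilinear form $B$ on $\pi(\A)V\h$ are carried out properly, including the well-definedness point via Cauchy--Schwarz. The paper itself gives no proof of this statement --- it is quoted as Theorem 1.4.2 of Arveson's \emph{Subalgebras of $C^*$-algebras} --- and your argument is essentially Arveson's original one, so there is nothing to reconcile.
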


Although we are mainly concerned about
$\cst$-extreme points of UCP maps, we sometimes mention results
about (linear) extreme points as well, in order to make comparisons
between the two situations. Clearly, the set $\sha$ is a convex set
(i.e. $\sum_{i=1}^n \lambda_i\phi_i\in\sha$, whenever
$\phi_i\in\sha$ and $\lambda_i\in[0,1]$, $1\leq i\leq n$ with
$\sum_{i=1}^n \lambda_i=1$). The following is an abstract characterization of extreme points of UCP maps due to Arveson (Theorem 1.4.6, \cite{Ar69}).

\begin{theorem}[Extreme point condition]\label{Extreme point condition}
Let $\phi\in\sha$, and let $(\pi,V,\hpi)$ be its minimal Stinespring triple. Then $\phi$ is extreme in $\sha$  if and only if
 the map $T\mapsto V^*TV$ from $\pi(\A)'$ to $\bh$ is injective.
\end{theorem}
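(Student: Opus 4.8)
The plan is to establish both implications in contrapositive form; the only non-trivial input is the Radon--Nikodym type theorem (Theorem~\ref{Radon-nikodym type theorem}) together with the minimality relation $\hpi=[\pi(\A)V\h]$.

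\emph{From non-injectivity to non-extremity.} Suppose the map $T\mapsto V^*TV$ on $\pi(\A)'$ has a non-zero kernel. Since $\pi(\A)'$ is a von Neumann algebra and $V^*TV$ is self-adjoint whenever $T$ is, decomposing a kernel element into its real and imaginary parts produces a non-zero self-adjoint $T\in\pi(\A)'$ with $V^*TV=0$; after rescaling we may assume $\|T\|\le1$. Then $I\pm T\ge0$, and since $(I\pm T)^{1/2}$ commutes with $\pi(a)$ we may write $\psi_\pm(a):=V^*(I\pm T)\pi(a)V=\big((I\pm T)^{1/2}V\big)^*\pi(a)\big((I\pm T)^{1/2}V\big)$, so each $\psi_\pm$ is CP; moreover $\psi_\pm(1)=V^*V\pm V^*TV=I_\h$, so $\psi_\pm\in\sha$, and evidently $\phi=\tfrac12\psi_++\tfrac12\psi_-$. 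Finally $\psi_+\ne\psi_-$: otherwise $V^*T\pi(a)V=0$ for every $a\in\A$, whence for all $a,b\in\A$ and $\xi,\eta\in\h$, using $T\in\pi(\A)'$,
\[
\langle\pi(a)V\xi,\,T\pi(b)V\eta\rangle=\langle\xi,\,V^*T\pi(a^*b)V\eta\rangle=0,
\]
and since the vectors $\pi(a)V\xi$ span a dense subspace of $\hpi$ by minimality, $T=0$, a contradiction. Hence $\phi$ is a proper convex combination of distinct members of $\sha$ and is not extreme.

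\emph{From non-extremity to non-injectivity.} Suppose $\phi=\lambda\psi_1+(1-\lambda)\psi_2$ with $\lambda\in(0,1)$, $\psi_1,\psi_2\in\sha$ and $\psi_1\ne\psi_2$. Then $\phi-\lambda\psi_1=(1-\lambda)\psi_2$ is CP, so $\lambda\psi_1\le\phi$, and Theorem~\ref{Radon-nikodym type theorem} yields a positive contraction $T_1\in\pi(\A)'$ with $\lambda\psi_1(a)=V^*T_1\pi(a)V$ for all $a$. Put $T:=T_1-\lambda I_{\hpi}\in\pi(\A)'$, a self-adjoint element. Then $V^*TV=\lambda\psi_1(1)-\lambda I_\h=0$. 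If $T=0$ then $T_1=\lambda I_{\hpi}$, hence $\lambda\psi_1(a)=\lambda V^*\pi(a)V=\lambda\phi(a)$ for all $a$, so $\psi_1=\phi$ and then $\psi_2=\phi=\psi_1$, contradicting $\psi_1\ne\psi_2$. Thus $T$ is a non-zero element of the kernel, and $T\mapsto V^*TV$ fails to be injective.

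\emph{Main obstacle.} There is no serious obstruction once the Radon--Nikodym theorem is available; the one point requiring care is the translation between the ``scalar'' condition $V^*TV=0$ (evaluation at the identity) and the ``map-level'' condition $V^*T\pi(a)V=0$ for all $a$, which is precisely where the minimality of the Stinespring triple enters and does all the work.
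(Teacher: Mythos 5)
Your proof is correct. The paper does not prove this statement at all --- it is quoted as Theorem 1.4.6 of Arveson's \emph{Subalgebras of $C^*$-algebras} --- and your argument is essentially the standard one from that source: the Radon--Nikodym theorem converts a non-trivial convex decomposition into a positive contraction $T_1\in\pi(\A)'$ with $T_1-\lambda I$ in the kernel, and conversely a non-zero self-adjoint kernel element $T$ with $\|T\|\le 1$ yields the splitting $\phi=\tfrac12\psi_++\tfrac12\psi_-$, with minimality of $[\pi(\A)V\h]$ ensuring $\psi_+\neq\psi_-$.
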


We now  turn our attention to the main topic of $\cst$-convexity of the generalized state space $\sha$. The  space $\sha$ is a {\em $\cst$-convex set} in the following sense: If $\phi_i\in\sha$ and $T_i\in\bh$ for $1\leq i\leq n$ with $\sum_{i=1}^nT_i^*T_i=I_\h$, then their {\em $\cst$-convex combination}
\begin{equation*}
\phi (\cdot ):=  \sum_{i=1}^nT_i^*\phi_i(\cdot)T_i
\end{equation*}
is in $\sha$. The operators $T_i$'s are called {\em
$\cst$-coefficients}. When $T_i$'s are invertible, the sum is  called
a {\em proper $\cst$-convex combination} of $\phi$. Following
\cite{FaMo}, we consider the following definition:

\begin{definition}\label{definition of C*-extreme points}
A UCP map $\phi:\A\to\bh$ is  called a {\em $\cst$-extreme point} of $\sha$ if whenever
\[\phi(\cdot)=\sum_{i=1}^nT_i^*\phi_i(\cdot)T_i,\]
is a proper $\cst$-convex combination of $\phi$, then  $\phi_i$ is unitarily equivalent to $\phi$ for each $i$ i.e. there is a unitary $U_i\in\bh$ such that $\phi_i=U_i^*\phi(\cdot)U_i$.
\end{definition}

It is clear that every map unitarily equivalent to a $\cst$-extreme point is also $\cst$-extreme.   The structure of $\cst$-extreme points of $\sha$ has been studied extensively, see \cite{BaBhMa, FaMo, FaPlSm, FaZh, Gr, Ma2, Zh} among others. The aim of this article is to understand the behaviour of $\cst$-extreme points of $\sha$, upto unitary equivalence.

A key ingredient in our approach is a result by Farenick and Zhou \cite{FaZh}, who  taking cue from Arveson's extreme
point condition for UCP maps provided an abstract characterization of
$\cst$-extreme points of $\sha$ by making use of Stinespring
decomposition.
We restate their result with minor modifications in our notation and
give an outline of the proof. In what follows, $\ran(T)$ denotes the
range of an operator $T$.

\begin{theorem}[Theorem 3.1, \cite{FaZh}]\label{Farenick Zhou criterion}
Let $\phi:\A\to\bh$ be a UCP map with the minimal Stinespring triple $(\pi,V,\hpi)$. Then $\phi$ is $C^*$-extreme in $\sha$ if and only if for any positive operator $D \in\copa$ with $V^*DV$  invertible, there exist a partial isometry $U\in\copa$ with $\ran(U^*)=\ran(U^*U)=\overline{\ran(D^{1/2})}$ and an invertible $Z\in\bh$ such that $UD^{1/2}V=VZ$.
\end{theorem}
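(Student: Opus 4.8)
The plan is to derive both implications from the Radon–Nikodym type theorem (Theorem \ref{Radon-nikodym type theorem}) together with the uniqueness of the minimal Stinespring triple, mimicking how Arveson's extreme point condition (Theorem \ref{Extreme point condition}) is proved but keeping track of the extra data (operators $T_i$ acting on $\h$) that the $\cst$-convexity setting introduces. First I would unwind the definition: a proper $\cst$-convex decomposition $\phi=\sum_i T_i^*\phi_i(\cdot)T_i$ with each $T_i$ invertible. Each summand $a\mapsto T_i^*\phi_i(a)T_i$ is a CP map dominated by $\phi$, so by Theorem \ref{Radon-nikodym type theorem} there is a positive contraction $D_i\in\copa$ with $T_i^*\phi_i(a)T_i=V^*D_i\pi(a)V$ for all $a\in\A$, and $\sum_i D_i=I_{\hpi}$. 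Note $V^*D_iV=T_i^*\phi_i(1)T_i=T_i^*T_i$ is invertible. Conversely, any positive $D\in\copa$ with $V^*DV$ invertible can be rescaled and paired with $I_{\hpi}-D$ (after normalizing to contractions) to build a two-term proper $\cst$-convex decomposition of $\phi$, so the operators $D$ appearing in the statement are exactly the ones that can arise this way. This translation is the routine part.

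The heart of the matter is to see that ``$\phi_i$ unitarily equivalent to $\phi$'' is equivalent to the factorization $UD^{1/2}V=VZ$ with $U$ a partial isometry in $\copa$ of the stated range type and $Z$ invertible. Given $D\in\copa$ positive with $V^*DV$ invertible, consider the CP map $\psi(a):=V^*D^{1/2}\pi(a)D^{1/2}V = V^*D\pi(a)V$ (using $D\in\copa$). Its minimal Stinespring space is $[\pi(\A)D^{1/2}V\h]=[D^{1/2}\pi(\A)V\h]=\overline{\ran(D^{1/2})}$ (again $D\in\copa$, and $\hpi=[\pi(\A)V\h]$), with dilation operator $D^{1/2}V$ and representation $\pi$ compressed to $\overline{\ran(D^{1/2})}$. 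The map $\psi$ (suitably normalized by conjugating with $(V^*DV)^{-1/2}$) being unitarily equivalent to $\phi$ amounts, via uniqueness of the minimal Stinespring triple, to the existence of a unitary $W$ from $\hpi$ onto $\overline{\ran(D^{1/2})}$ intertwining the representations and carrying $VZ$ to $D^{1/2}V$ for the appropriate invertible $Z$ on $\h$. Extending $W$ by $0$ on $\hpi\ominus\overline{\ran(D^{1/2})}$ and setting $U=W$ (or $U^*=W$) produces precisely a partial isometry in $\copa$ with $\ran(U^*)=\ran(U^*U)=\overline{\ran(D^{1/2})}$ satisfying $UD^{1/2}V=VZ$; conversely such a $U,Z$ let one reverse the argument. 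One must check that an arbitrary proper $\cst$-convex combination with $n$ terms reduces to handling each $D_i$ separately — this is immediate since unitary equivalence of each $\phi_i$ to $\phi$ is exactly what the definition of $\cst$-extremity demands, term by term.

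The step I expect to be the main obstacle is the bookkeeping in the ``only if'' direction: from the hypothesis that $\phi$ is $\cst$-extreme one gets, for the two-term decomposition built from $D$ and $I_{\hpi}-D$, a unitary $U_1$ on $\h$ with $\phi_1=U_1^*\phi(\cdot)U_1$, and one must convert this $\h$-level unitary into the $\hpi$-level partial isometry $U\in\copa$ with the precise range conditions, while simultaneously producing the invertible $Z\in\bh$ from the relation between $V$, $U_1$, and $(V^*DV)^{1/2}$. Chasing the intertwining unitary coming from Stinespring uniqueness and verifying it lands in $\copa$ (not merely intertwines $\pi$ with a compression) requires care: one uses that $\pi(\A)D^{1/2}V\h$ is dense in $\overline{\ran(D^{1/2})}$ and that the original dilation is minimal, so the intertwiner is uniquely determined and commutes with all of $\pi(\A)$ once extended by zero. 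Checking $\ran(U^*)=\ran(U^*U)$, i.e. that $U^*$ is a co-isometry onto its range with the right support projection, is then a matter of identifying $U^*U$ with the support projection of $D$, which equals $P_{\overline{\ran(D^{1/2})}}$ since $D\in\copa$ is positive. I would assemble these pieces and leave the straightforward normalizations (passing between contractions and their invertible rescalings) to the reader.
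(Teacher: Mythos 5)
Your proposal follows essentially the same route as the paper: the ``if'' direction via the Radon--Nikodym theorem applied term by term to a proper $\cst$-convex combination, and the ``only if'' direction via a two-term proper decomposition built from a small multiple of $D$ and its complement, followed by the identification $[\pi(\A)D^{1/2}V\h]=\overline{\ran(D^{1/2})}$ and the uniqueness of the minimal Stinespring triple to produce the intertwining partial isometry $U\in\copa$ (extended by zero off $\overline{\ran(D^{1/2})}$) and the invertible $Z$. The only point to tighten is the normalization: you must choose the scalar $\alpha$ strictly small enough that $I_{\hpi}-\alpha D$ is positive \emph{and invertible} (not merely a contraction), so that the second $\cst$-coefficient $(V^*(I_{\hpi}-\alpha D)V)^{1/2}$ is invertible and the combination is genuinely proper.
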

\begin{proof}
$\Longrightarrow$ Let $\phi$ be $C^*$-extreme in $\sha$, and let $D\in\copa$ be positive with $V^*DV$  invertible. Choose $\alpha>0$  small enough so that $I_{\hpi}-\alpha D$ is positive and invertible. Set $ T_1=(\alpha V^*DV)^\frac{1}{2}$ and $T_2=(V^*(I_{\hpi}-\alpha D)V)^{1/2}$. Then $T_1,T_2$ are invertible, and satisfy $T_1^*T_1+T_2^*T_2=V^*V=I_\h$. Now we define $\phi_1,\phi_2:\A\to\bh$ by
\[\phi_1(a)=T_1^{-1}(\alpha V^*D\pi(a)V) T_1^{-1},~ \mbox{ and }~ \phi_2(a)=T_2^{-1}V^*(I_{\hpi}-\alpha D)\pi(a)VT_2^{-1}
\]
for all $a\in\A$. Clearly  $\phi_1$ and $\phi_2$ are UCP maps such that
$\phi(a)=T_1^*\phi_1(a)T_1+T_2^*\phi_2(a)T_2$, $a\in\A$.
Since $\phi$ is a $C^*$-extreme point in $\sha$, there exists a unitary $W\in\bh$ such that for all $a\in\A$, we have $\phi(a)=W^*\phi_1(a)W$, that is,
\begin{equation*}
    \phi(a)=(\sqrt{\alpha}D^{1/2}VT_1^{-1}W)^*\pi(a)(\sqrt{\alpha}D^{1/2}VT_1^{-1}W)=X^*\pi(a)X,
\end{equation*}
where $X=\sqrt{\alpha}D^{1/2}VT_1^{-1}W$. It is easy to verify  that $[\pi(\A)X(\h)]=\overline{\ran(D^{1/2})} $ (call it $\K$). Then the
triple $(\pi(\cdot)_{|{\K}},X,\K)$ is another minimal Stinespring triple for $\phi$; hence 
by uniqueness, there exists a unitary operator $\widetilde{U}:\K\to \hpi$ such that \begin{equation*}
    \widetilde{U}X=V,\quad\text{ and }\quad \pi(a)\widetilde{U}=\widetilde{U}\pi(a)_{|_{\K}}\quad \mbox{for all}~ a\in\A.
\end{equation*}
Extend $\widetilde{U}$ to $\hpi$ by assigning $0$ on the complement of $\K$, and call this map $U$. Then $U$ is a partial isometry (in fact, a co-isometry) with ${\ran}(U^*)=\K$.
We also note that $UX=V$ and  $\pi(a)U=U\pi(a)$ for all $a\in\A$, so $ U\in\copa$. Further, we have
\begin{equation*}
    V=UX=U\sqrt{\alpha}D^{1/2}VT_1^{-1}W=U\sqd VZ^{-1}
\end{equation*}
where $Z=\frac{1}{\sqrt{\alpha}}W^*T_1\in\bh$ is invertible; hence we get $U\sqd V=VZ$.\\
$\Longleftarrow$ Assume the `only if' condition, and  let $\phi(\cdot)=\sum_{i=1}^n T_i^*\phi_i(\cdot)T_i$ be a proper $C^*$-convex combination of $\phi$. Then   $T_i^*\phi_i(\cdot)T_i\leq \phi(\cdot)$ for each $i$, so by Radon-Nikodym type theorem (Theorem \ref{Radon-nikodym type theorem}) there exists $D_i\in\copa$ with $0\leq D_i\leq I_{\hpi}$ such that $T_i^*\phi_i(\cdot)T_i=V^*D_i\pi(\cdot)V$. Note that $V^*D_iV=T_i^*T_i$, so $V^*D_iV$ is invertible; hence by  hypothesis, there exist a partial isometry $U_i\in\copa$ with $\ran(U_i^*U_i)=\overline{\ran(D_i^{1/2})}$ and an invertible $Z_i\in\bh$ such that $U_iD_i^{1/2}V=VZ_i$. Note that  $U_i^*U_iD_i^{1/2}=D_i^{1/2}$; hence for all $a\in\A$, we have
\begin{equation*}
\begin{split}
    T_i^*\phi_i(a)T_i&=V^*D_i\pi(a)V=V^*D_i^{1/2}\pi(a)D_i^{1/2}V
    =V^*D_i^{1/2}\pi(a)U_i^*U_iD_i^{1/2}V\\
    &=(U_iD_i^{1/2}V)^*\pi(a)(U_iD_i^{1/2}V)
    =(VZ_i)^*\pi(a)(VZ_i)
    =Z_i^*\phi(a)Z_i,
\end{split}
\end{equation*}
which in other words says $\phi_i(a)=W_i^*\phi(a)W_i$, where $W_i=Z_iT_i^{-1}$. Note that $W_i^*W_i=\phi_i(1)=I_\h$,  and since $W_i$ is invertible, it follows that  $W_i$ is unitary. Thus $\phi_i$ is unitarily equivalent to $\phi$ for each $i$, which concludes that $\phi$ is a $C^*$-extreme point in $\sha$.
\end{proof}

It is claimed in  \cite{FaZh}  that the operator $U$ in the statement of
 Theorem \ref{Farenick Zhou criterion} above is a unitary. At this point, we do not know whether $U$ can be chosen to be a unitary.

The following corollary is a characterization of $\cst$-extreme  maps provided by Zhou \cite{Zh}. The proof follows directly
from Theorem \ref{Farenick Zhou criterion} and Radon-Nikodym type theorem. However, the statement as written in \cite{Zh} has a minor error; see Example 3.7 in \cite{BaBhMa} for a counterexample (which is stated there in the language of positive operator valued measures). Also see Example \ref{invertibility cannot be dropped} below.  The proof of
the following proceeds on almost the same lines as in \cite{Zh}, so it is left to the readers.

\begin{corollary}[Theorem 3.1.5, \cite{Zh}]\label{Zhou criteria for C^* extreme points}
Let $\phi\in\sha$. Then $\phi$ is $\cst$-extreme in $\sha$ if and only if for any completely positive map $\psi$ satisfying $\psi\leq \phi$ with $\psi(1)$ invertible,  there exists an invertible operator $T\in\bh$ such that $\psi(a)=T^*\phi(a)T$ for all $a\in \A$.
\end{corollary}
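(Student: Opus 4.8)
The plan is to derive both implications by combining the Radon--Nikodym type theorem (Theorem \ref{Radon-nikodym type theorem}) with the Farenick--Zhou criterion (Theorem \ref{Farenick Zhou criterion}); the forward direction needs the latter, while the converse is a short direct argument that mirrors the $\Longleftarrow$ part of the proof of Theorem \ref{Farenick Zhou criterion}.

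For the forward implication, suppose $\phi$ is $\cst$-extreme with minimal Stinespring triple $(\pi,V,\hpi)$, and let $\psi\leq\phi$ be a CP map with $\psi(1)$ invertible. First I would apply Theorem \ref{Radon-nikodym type theorem} to obtain a positive contraction $D\in\copa$ with $\psi(\cdot)=V^*D\pi(\cdot)V$; then $V^*DV=\psi(1)$ is invertible, so Theorem \ref{Farenick Zhou criterion} applies and yields a partial isometry $U\in\copa$ with $\ran(U^*U)=\overline{\ran(D^{1/2})}$ together with an invertible $Z\in\bh$ such that $UD^{1/2}V=VZ$. Since $U^*UD^{1/2}=D^{1/2}$, I would then compute, exactly as in Theorem \ref{Farenick Zhou criterion}, that $\psi(a)=V^*D^{1/2}\pi(a)U^*UD^{1/2}V=(UD^{1/2}V)^*\pi(a)(UD^{1/2}V)=Z^*\phi(a)Z$ for all $a\in\A$, so $T=Z$ does the job.

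For the converse, assume the factorization property and let $\phi(\cdot)=\sum_{i=1}^n T_i^*\phi_i(\cdot)T_i$ be a proper $\cst$-convex combination of $\phi$. Each summand $\psi_i:=T_i^*\phi_i(\cdot)T_i$ satisfies $\psi_i\leq\phi$ with $\psi_i(1)=T_i^*T_i$ invertible, so by hypothesis there is an invertible $Z_i\in\bh$ with $T_i^*\phi_i(a)T_i=Z_i^*\phi(a)Z_i$ for all $a$; hence $\phi_i=W_i^*\phi(\cdot)W_i$ with $W_i=Z_iT_i^{-1}$ invertible. Since $W_i^*W_i=\phi_i(1)=I_\h$, the operator $W_i$ is unitary, so $\phi_i$ is unitarily equivalent to $\phi$, and $\phi$ is $\cst$-extreme. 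The only genuine obstacle is the bookkeeping around $U$: one must extract $U^*UD^{1/2}=D^{1/2}$ from $\ran(U^*U)=\overline{\ran(D^{1/2})}$ and keep in mind that $U$ need not be unitary (as the remark after Theorem \ref{Farenick Zhou criterion} stresses) --- but this causes no difficulty since only $U^*U$ enters the computation. The hypothesis that $\psi(1)$ be invertible is precisely what is needed to invoke Theorem \ref{Farenick Zhou criterion}, and dropping it is exactly where the version stated in \cite{Zh} fails.
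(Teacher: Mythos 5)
Your proof is correct and follows exactly the route the paper indicates (it leaves the proof to the reader, noting only that it "follows directly from Theorem \ref{Farenick Zhou criterion} and Radon-Nikodym type theorem"): Radon--Nikodym to produce $D\in\copa$ with $V^*DV=\psi(1)$ invertible, the Farenick--Zhou criterion to factor through $UD^{1/2}V=VZ$, and the same unitarity argument for $W_i=Z_iT_i^{-1}$ in the converse. Your handling of the partial isometry $U$ via $U^*UD^{1/2}=D^{1/2}$ is the right observation and matches the computation in the paper's proof of Theorem \ref{Farenick Zhou criterion}.
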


We now give another abstract characterization of $\cst$-extreme points, whose proof follows from a direct application of Theorem \ref{Farenick Zhou criterion} and polar decomposition of operators. This powerful characterization  turns out to be the most useful for our purpose.

\begin{corollary}\label{a C*-extreme criterian in language of factorization of positive operators}
Let $\phi:\A\to\bh$ be a UCP map with minimal Stinespring triple  $(\pi, V,\h_{\pi})$. Then $\phi$ is  $C^*$-extreme  in $\sha$ if and only if for any positive operator $D\in\copa$ with $V^*DV$  invertible, there exists $S\in\copa$ such that $D=S^*S$, $ SVV^*=VV^*SVV^*$  and $V^*SV$ is invertible (i.e. $S(V\h)\subseteq V\h$ and $S_{|_{V\h}}$ is invertible).
\end{corollary}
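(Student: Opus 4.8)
The plan is to deduce this corollary directly from Theorem \ref{Farenick Zhou criterion} by absorbing the partial isometry $U$ and the operator $D^{1/2}$ into a single operator $S$, and conversely to recover the hypotheses of that theorem from the factorization $D = S^*S$. For the forward direction, suppose $\phi$ is $C^*$-extreme and let $D \in \copa$ be positive with $V^*DV$ invertible. Theorem \ref{Farenick Zhou criterion} supplies a partial isometry $U \in \copa$ with $\ran(U^*) = \ran(U^*U) = \overline{\ran(D^{1/2})}$ and an invertible $Z \in \bh$ with $UD^{1/2}V = VZ$. I would set $S := U D^{1/2}$; since $U, D^{1/2} \in \copa$ and $\copa$ is an algebra, $S \in \copa$. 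The key computation is $S^*S = D^{1/2} U^* U D^{1/2}$, and because $U^*U$ is the projection onto $\ran(U^*U) = \overline{\ran(D^{1/2})}$, it fixes $D^{1/2}$, i.e.\ $U^*U D^{1/2} = D^{1/2}$, giving $S^*S = D^{1/2} D^{1/2} = D$. Next, $SV = U D^{1/2} V = VZ$, so $V^*SV = V^*VZ = Z$ is invertible; moreover $SV\h = VZ\h = V\h$ (as $Z$ is invertible), which is exactly the statement $SVV^* = VV^*SVV^*$ (both sides applied to any vector give the projection of $SVV^*\xi$ onto $V\h$, and $SV\h \subseteq V\h$ forces equality) and shows $S_{|_{V\h}}$ is invertible with inverse implemented by $Z^{-1}$.

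For the converse, assume the factorization condition and verify the hypothesis of Theorem \ref{Farenick Zhou criterion}. Given positive $D \in \copa$ with $V^*DV$ invertible, obtain $S \in \copa$ with $D = S^*S$, $S(V\h) \subseteq V\h$, and $S_{|_{V\h}}$ invertible. Let $S = U_0 |S|$ be the polar decomposition, where $|S| = (S^*S)^{1/2} = D^{1/2}$ and $U_0$ is the partial isometry with initial space $\overline{\ran(|S|)} = \overline{\ran(D^{1/2})}$ and final space $\overline{\ran(S)}$. Since $S \in \copa$ and $\copa$ is a von Neumann algebra (hence closed under polar decomposition), both $U_0$ and $|S| = D^{1/2}$ lie in $\copa$. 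Put $U := U_0$; then $\ran(U^*U) = \overline{\ran(D^{1/2})}$ and $\ran(U^*) = \overline{\ran(D^{1/2})}$ as required, and $U D^{1/2} V = U_0 |S| V = S V$. Because $SV\h \subseteq V\h$ with $S_{|_{V\h}}$ invertible, the operator $Z := V^*SV \in \bh$ is invertible, and $SV = VZ$ (using $SV\h \subseteq V\h$, we have $SV = VV^*SV = VZ$). Hence $U D^{1/2} V = VZ$ with $Z$ invertible, which is precisely the condition in Theorem \ref{Farenick Zhou criterion}, so $\phi$ is $C^*$-extreme.

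I would also record the parenthetical equivalence separately: for $S \in \bh$, the identity $SVV^* = VV^*SVV^*$ is equivalent to $S(V\h) \subseteq V\h$ (apply both sides to an arbitrary vector $\eta$; writing $V\h = \ran(VV^*)$, the left side is $S$ composed with the projection onto $V\h$, the right side is the projection onto $V\h$ of that, and equality for all $\eta$ says $S$ maps the range of the projection into itself); and given this containment, $S_{|_{V\h}}$ is invertible as an operator on $V\h$ if and only if $V^*SV$ is invertible on $\h$, since $V$ is a unitary from $\h$ onto $V\h$ intertwining $S_{|_{V\h}}$ with $V^*SV$. The main obstacle, such as it is, is bookkeeping around partial isometries: one must be careful that the range and initial-space conditions on $U$ in Theorem \ref{Farenick Zhou criterion} match exactly what polar decomposition produces, and that $U^*U$ acting as the projection onto $\overline{\ran(D^{1/2})}$ is indeed what lets $U^*U D^{1/2} = D^{1/2}$ (equivalently $U^*U|S| = |S|$, a standard property of polar decomposition). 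Everything else is a routine manipulation, and no step presents a genuine difficulty.
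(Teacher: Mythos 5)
Your proposal is correct and follows essentially the same route as the paper: the forward direction sets $S=UD^{1/2}$ using Theorem \ref{Farenick Zhou criterion} and checks $S^*S=D$, $V^*SV=Z$, and $SVV^*=VV^*SVV^*$, while the converse uses the polar decomposition $S=U D^{1/2}$ inside the von Neumann algebra $\copa$ and recovers $UD^{1/2}V=VZ$ with $Z=V^*SV$. The only cosmetic difference is that you verify $SVV^*=VV^*SVV^*$ via the range inclusion $S(V\h)\subseteq V\h$ rather than by the paper's direct operator identity, which is an equivalent bookkeeping choice.
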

\begin{proof}
$\Longrightarrow$ We use the equivalent conditions for $\cst$-extreme points as in  Theorem \ref{Farenick Zhou criterion}.  Assume first that $\phi$ is a $C^*$-extreme point in $\sha$. Let $D\in\copa$ be a positive operator such that $V^*DV$ is invertible.   By Theorem \ref{Farenick Zhou criterion}, there exist a partial isometry $U\in\copa$ with $U^*UD^{1/2}=D^{1/2}$ and an invertible $Z\in\bh$ such that $UD^{1/2}V=VZ.$ Set $S=UD^{1/2}$. Then $S^*S=D^{1/2}U^*UD^{1/2}=D$ and $V^*SV=V^*UD^{1/2}V=V^*VZ=Z$. Thus $V^*SV$ is invertible, and we get
\begin{equation*}
    SVV^*=UD^{1/2}VV^*
    =(VZ)V^*=VV^*(VZ)V^*
    =VV^*(UD^{1/2}V)V^*
    =VV^*SVV^*.
\end{equation*}
$\Longleftarrow$ Assume the `only if' conditions. To show that $\phi$ is $\cst$-extreme in $\sha$, let $D\in\copa$ be positive  with $V^*DV$ invertible. By hypothesis, there exists $S\in\copa$ such that $D=S^*S$, $SVV^*=VV^*SVV^*$ and $V^*SV$ is invertible. Let $S=U\sqd$ be the polar decomposition of $S$, where $U$ is a partial isometry with initial space $\overline{\ran(\sqd)}$ i.e. $\ran(U^*)=\overline{\ran(D^{1/2})}$. Since $S\in\copa$, and $\copa$ is a von Neumann algebra, it follows  that $U\in\copa$. Further, we have
\begin{equation*}
    U\sqd V=SV=(SVV^*)V=(VV^*SVV^*)V=VV^*SV=VZ,
\end{equation*}
where $Z=V^*SV\in\bh$, which is invertible. That $\phi$ is $\cst$-extreme in $\sha$ now follows from the equivalent criteria of  Theorem \ref{Farenick Zhou criterion}. This completes the proof.
\end{proof}

In the corollary above, we cannot drop the assumption that $V^*DV$ is invertible as the following example shows. Below $\T$ is the unit circle with one dimensional Lebesgue measure, $C(\T)$ is the space of continuous functions on $\T$, and $ H^2= H^2(\T)$ is the Hardy space.

\begin{example}\label{invertibility cannot be dropped}
Consider the UCP  map $\phi:C(\T)\to \B( H^2)$ defined by
\begin{equation}\label{eq:Hardy space example}
    \phi(f)=P_{ H^2}{M_f}_{|_{ H^2}}=T_f~~\mbox{for all }f\in C(\T).
\end{equation}
Here $M_f$ is the multiplication operator on $L^2(\T)$ by the symbol $f$.
Then $\phi$ is a $\cst$-extreme point in $ S_{ H^2}(C(\T))$ (Example 2, \cite{FaMo}). Note that $\phi$ is already in minimal Stinespring form with the representation
$\pi:C(\T)\to \B(L^2(\T))$  given by $\pi(f)=M_f$.
Then it is well-known that $\pi(C(\T))'=\{M_f; f\in L^\infty(\T)\}\subseteq\B(L^2(\T))$. Now let $d\in L^\infty(\T)$ be  such that $d\geq 0$ a.e. and  the subset $\{x\in \T; d(x)=0\}$ has positive one-dimensional Lebesgue measure.  It is then clear that $M_d$ is  not invertible which is equivalent to saying that $P_{ H^2}{M_d}_{|_{ H^2}}$  is not invertible.  Now let if possible, there exists $s\in L^\infty(\T)$ such that $d=  \bar{s}s$ and $M_s( H^2)\subseteq H^2$. This implies that $s\in  H^\infty(\T)$. But then   the zero set of any  function in $H^\infty(\T)$ (in particular, $s$) has zero  measure (Theorem 25.3, \cite{Co}). This contradicts the assumption that zero set of the function $d$ has positive  measure.
\end{example}

\begin{observation}\label{observation}
Let $\phi$ be a $\cst$-extreme point in $\sha$ with minimal Stinespring triple $(\pi,V,\hpi)$. Then for any positive $D\in\copa$ with $V^*DV$ invertible, we observe the following from the proof of Theorem \ref{Farenick Zhou criterion}:
\begin{itemize}
    \item There is a co-isometry $U$ with $\ran(U^*)=\overline{\ran(D^{1/2})}$ and an invertible $Z$ such that $UD^{1/2}V=VZ$. In particular if $D$ is one-one (equivalently, $D$ has dense range), then  $U$ is unitary.
    \item If $S=UD^{1/2}$, then $S^*$ is one-one.
    \item Also $V^*SV$ is invertible such that $\|(V^*SV)^{-1}\|^2=\|(V^*DV)^{-1}\|$.
\end{itemize}
\end{observation}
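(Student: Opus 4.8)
The plan is to extract all three assertions directly from the construction carried out in the forward implication of Theorem~\ref{Farenick Zhou criterion}, so I would begin by fixing a positive $D\in\copa$ with $V^*DV$ invertible and recalling, with the same notation as in that proof, the scalar $\alpha>0$, the unitary $W\in\bh$, the operator $T_1=(\alpha V^*DV)^{1/2}$, the subspace $\K=\overline{\ran(\sqd)}$, the unitary $\widetilde U\colon\K\to\hpi$ produced by uniqueness of the minimal Stinespring triple, and its zero-extension $U\in\copa$, which satisfies $\ran(U^*)=\K$, $U\sqd V=VZ$ with $Z=\tfrac{1}{\sqrt{\alpha}}W^*T_1$ invertible. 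With these objects in hand each bullet is a short verification.

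For the first item I would note that $\widetilde U$ is a unitary \emph{onto} $\hpi$, so $UU^*=\widetilde U\widetilde U^*=I_{\hpi}$, i.e. $U$ is a co-isometry, while $\ran(U^*)=\ran(\widetilde U^*)=\K=\overline{\ran(\sqd)}$; and $U$ is then unitary exactly when $U^*U=I_{\hpi}$, that is when $\K=\hpi$. Using the standard facts for a positive operator that $\ker\sqd=\ker D$ and $\overline{\ran(\sqd)}=(\ker\sqd)^{\perp}$, the condition $\K=\hpi$ is equivalent to $\ker D=\{0\}$, i.e. to $D$ being one-one (equivalently, having dense range). For the second item, writing $S=U\sqd$ so that $S^*=\sqd U^*$, if $S^*x=0$ then $U^*x\in\ker\sqd$; but $U^*x\in\ran(U^*)=\overline{\ran(\sqd)}=(\ker\sqd)^{\perp}$, forcing $U^*x=0$, and since $U^*$ is an isometry this gives $x=0$, so $S^*$ is one-one. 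For the third item, since $V$ is an isometry, $V^*SV=V^*U\sqd V=V^*(VZ)=Z$, which is invertible; and because $\alpha$ is a positive scalar, $T_1=\sqrt{\alpha}\,(V^*DV)^{1/2}$, whence $Z=W^*(V^*DV)^{1/2}$ and $Z^{-1}=(V^*DV)^{-1/2}W$. As $W$ is unitary and $(V^*DV)^{-1/2}$ is a positive operator, $\|(V^*SV)^{-1}\|=\|Z^{-1}\|=\|(V^*DV)^{-1/2}\|=\|(V^*DV)^{-1}\|^{1/2}$, and squaring gives $\|(V^*SV)^{-1}\|^2=\|(V^*DV)^{-1}\|$.

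I do not expect a genuine obstacle here: the statement is essentially a corollary of the explicit construction in Theorem~\ref{Farenick Zhou criterion}. The only delicate point is the bookkeeping that keeps the unitary $\widetilde U\colon\K\to\hpi$ separate from its zero-extension $U$ on $\hpi$ (so that $U$ is only a co-isometry in general), together with the elementary kernel/range identities for positive operators used to pin down when $U$ is unitary. I would therefore simply present the three verifications above in sequence.
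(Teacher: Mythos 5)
Your proposal is correct and follows exactly the route the paper intends: the Observation carries no separate proof but explicitly refers back to the construction in the forward implication of Theorem \ref{Farenick Zhou criterion}, and your three verifications (co-isometry property of the zero-extension $U$, injectivity of $S^*$ via $\ran(U^*)=(\ker D^{1/2})^\perp$, and $V^*SV=Z=W^*(V^*DV)^{1/2}$ giving the norm identity) are precisely the bookkeeping the authors leave to the reader. No gaps.
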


We digress momentarily from $\cst$-convexity and consider a notion of factorization  of  subalgebras (not necessarily self-adjoint) in $\cst$-algebras. We shall return to $\cst$-extreme maps by providing their connection with such algebras.

\begin{definition}\label{definition of algebras with factorization}
A subalgebra $\M$ of a $\cst$-algebra $\A$ has  {\em factorization} in   $\A$ if for any positive and invertible element $D\in\A$, there is an invertible element $S$ such that $S,S^{-1}\in\M$ and $D=S^*S$.
\end{definition}

 The following proposition  follows directly from the definition of factorization and $*$-closed property of
 $C^*$-algebras, and so we omit the proof.
  Here and elsewhere, $\mathcal{S}^*$ denotes the set $\{S^*; S\in\mathcal{S}\}$ for any subset $\mathcal{S}$ of a $\cst$-algebra $\A$.

\begin{proposition}\label{M* has factorization}
If a subalgebra $\M$ has factorization in a $\cst$-algebra $\A$, then $\M^*$ also has factorization in $\A$ i.e. for any positive and invertible element $D\in\A$, there is an invertible element $S\in\M$ with $S,S^{-1}\in\M$ such that $D=SS^*$.
\end{proposition}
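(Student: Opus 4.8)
The plan is to invoke the factorization hypothesis for $\M$ directly on the element $D$ itself and then pass to adjoints. Concretely, let $D\in\A$ be positive and invertible. Since $D$ is (trivially) positive and invertible, Definition \ref{definition of algebras with factorization} applied to $\M$ produces an invertible element $S$ with $S,S^{-1}\in\M$ and $D=S^*S$. Now put $T:=S^*$. By the very definition of $\M^*$ we have $T\in\M^*$, and $T$ is invertible with $T^{-1}=(S^*)^{-1}=(S^{-1})^*$, which lies in $\M^*$ because $S^{-1}\in\M$. Finally $TT^*=S^*(S^*)^*=S^*S=D$, which is precisely the factorization $D=TT^*$ with $T,T^{-1}\in\M^*$ asserted in the statement.

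I do not anticipate any genuine obstacle here: the argument uses nothing beyond the fact that the $*$-operation is an involutive antiautomorphism of $\A$, which also makes $\M^*$ a subalgebra so that the phrase ``$\M^*$ has factorization'' is meaningful. The only mild subtlety is cosmetic, namely that the definition of factorization is written with $D=S^*S$ while for $\M^*$ one naturally lands on $D=TT^*$; these two shapes are interchanged by the $*$-operation, so no content is lost. If one insists on the literal form $D=T^*T$, it suffices to apply the hypothesis instead to the positive invertible element $D^{-1}$, writing $D^{-1}=S^*S$ with $S,S^{-1}\in\M$, and to take $T:=(S^{-1})^*\in\M^*$; then $T^{-1}=S^*\in\M^*$ and $T^*T=S^{-1}(S^*)^{-1}=(S^*S)^{-1}=D$. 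Either way the proof is a one-line application of the adjoint operation, which is why it is left to the reader in the text.
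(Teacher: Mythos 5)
Your proposal contains the right proof, but only in the parenthetical at the end; the main argument you present is circular, and the remark that the discrepancy between $D=S^*S$ and $D=SS^*$ is merely ``cosmetic'' is the one genuinely false step. Starting from the hypothesis $D=S^*S$ with $S,S^{-1}\in\M$ and setting $T=S^*$, the identity $D=TT^*$ with $T,T^{-1}\in\M^*$ is literally a restatement of the hypothesis, not of the conclusion: unpacking $T=S^*$ turns $D=TT^*$ back into $D=S^*S$. The conclusion of the proposition is the \emph{other} shape, $D=SS^*$ with $S,S^{-1}\in\M$ (equivalently $D=T^*T$ with $T,T^{-1}\in\M^*$, which is what Definition \ref{definition of algebras with factorization} applied to the subalgebra $\M^*$ actually demands). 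The two shapes are \emph{not} interchanged by applying the $*$-operation to the equation: $D$ is self-adjoint, so taking adjoints of $D=TT^*$ returns $D=TT^*$ and you learn nothing. For a noncommutative algebra such as a nest algebra, ``every positive invertible $D$ equals $S^*S$ with $S,S^{-1}\in\M$'' and ``every positive invertible $D$ equals $SS^*$ with $S,S^{-1}\in\M$'' are genuinely distinct assertions, and converting one into the other is the entire content of the proposition.

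The conversion is exactly your fallback argument: apply the hypothesis to the positive invertible element $D^{-1}$, write $D^{-1}=S^*S$ with $S,S^{-1}\in\M$, and invert to get $D=S^{-1}(S^{-1})^*=RR^*$ with $R=S^{-1}$ and $R^{-1}=S$ both in $\M$. That short computation, together with the observation that $\M^*$ is again a subalgebra, is what the paper has in mind when it says the result ``follows directly from the definition of factorization and $*$-closed property of $C^*$-algebras'' and omits the proof. The fallback should be promoted to the argument itself, and the first paragraph of your proposal discarded.
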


The factorization property of several non-selfadjoint algebras has widely been studied, of which we mention a few. The Cholesky factorization theorem talks about  factorization property of the algebra of upper triangular matrices  in $M_n$, the algebra of all $n\times n$ matrices. A result of Szeg\"o says that the Hardy algebra $H^\infty(\T)$ on the unit circle has factorization in $L^\infty(\T)$ (see Corollary \ref{factorization property of hardy algebra} below).   Many other algebras like  nest algebras, subdiagonal algebras  etc. and their factorization property have attracted very deep study (see \cite{Ar67, BhMa, Da, La}).

 The next corollary provides a bridge between the theory of $\cst$-extreme maps and factorization property of certain algebras.

\begin{corollary}\label{a factorization property of algebras coming out of C* extreme point}
Let $\phi$ be  a $\cst$-extreme point in $\sha$, and let $(\pi, V,\hpi)$ be its minimal Stinespring triple. If $D$ is any positive and invertible operator in $\copa$, then there exists an invertible operator  $S\in\copa$  such that $D=S^*S$, $SVV^*=VV^*SVV^*$, and $V^*SV$ is invertible with  inverse $V^*S^{-1}V$. In particular, the algebra \begin{equation}\label{eq:algebra with factorization}
    \M=\{T\in\copa; TVV^*=VV^*TVV^*\}
\end{equation}
has factorization in $\copa$.
\end{corollary}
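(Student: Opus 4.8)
The plan is to extract everything from Corollary~\ref{a C*-extreme criterian in language of factorization of positive operators} together with Observation~\ref{observation}; the only genuine work is to upgrade the operator $S$ produced there to an \emph{invertible} one and to check that both $S$ and $S^{-1}$ leave the subspace $V\h$ invariant.

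First, note that the hypothesis of Corollary~\ref{a C*-extreme criterian in language of factorization of positive operators} is met: since $\phi$ is unital, $V$ is an isometry, and since $D\in\copa$ is positive and invertible we have $D\ge\varepsilon I_{\hpi}$ for some $\varepsilon>0$, whence $V^*DV\ge\varepsilon V^*V=\varepsilon I_{\h}$, so $V^*DV$ is invertible. That corollary then yields $S\in\copa$ with $D=S^*S$, $SVV^*=VV^*SVV^*$ and $V^*SV$ invertible. To see that $S$ may be taken invertible, I would retrace the construction in the proof of that corollary (equivalently, invoke Observation~\ref{observation}): since $D$ is invertible it is in particular one-one, so the partial isometry $U$ appearing there is actually unitary, and $S=UD^{1/2}$ is then invertible with $S^{-1}=D^{-1/2}U^*$; as $\copa$ is a von Neumann algebra, $S^{-1}\in\copa$.

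Next I would read the relation $SVV^*=VV^*SVV^*$ as invariance of $V\h$ under $S$: writing $P=VV^*$ for the projection onto $V\h$, it says $SP=PSP$, i.e. $S(V\h)\subseteq V\h$, and under the unitary $V\colon\h\to V\h$ the restriction $S|_{V\h}$ is represented by $V^*SV$, which is invertible; hence $S|_{V\h}$ is a bijection of $V\h$. Since $S$ is injective on all of $\hpi$, the global inverse $S^{-1}$ must therefore also carry $V\h$ onto $V\h$, that is $S^{-1}VV^*=VV^*S^{-1}VV^*$, and $S^{-1}|_{V\h}=(S|_{V\h})^{-1}$; transporting this last identity through $V$ gives $V^*S^{-1}V=(V^*SV)^{-1}$, the asserted formula for the inverse.

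It remains to deduce the factorization statement. One checks directly that $\M=\{T\in\copa:TVV^*=VV^*TVV^*\}$ is a unital subalgebra of $\copa$: the defining relation again merely says $T(V\h)\subseteq V\h$, and operators leaving $V\h$ invariant are closed under composition and contain $I_{\hpi}$. Combining this with the preceding paragraphs, for every positive invertible $D\in\copa$ the operator $S$ constructed is invertible, lies in $\M$, has $S^{-1}\in\M$, and satisfies $D=S^*S$; this is exactly the factorization property of $\M$ in $\copa$ in the sense of Definition~\ref{definition of algebras with factorization}. I expect the delicate point to be the passage from invertibility of $V^*SV$ to invertibility of $S$ and to $S^{-1}\in\M$; once one systematically views the condition $SVV^*=VV^*SVV^*$ as invariance of $V\h$ under $S$, the rest is bookkeeping.
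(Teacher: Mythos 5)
Your proposal is correct and follows essentially the same route as the paper: both obtain the invertible $S=UD^{1/2}$ by noting that invertibility of $D$ forces the partial isometry from Theorem \ref{Farenick Zhou criterion}/Observation \ref{observation} to be unitary, and both then verify $S^{-1}\in\M$ and $(V^*SV)^{-1}=V^*S^{-1}V$. The only difference is cosmetic: where the paper checks these last two facts by direct operator identities such as $[(I_{\hpi}-VV^*)S^{-1}V](V^*SV)=0$, you argue via the equivalent invariant-subspace picture ($SP=PSP$ with $P=VV^*$, $S|_{V\h}$ a bijection of $V\h$, hence $S^{-1}(V\h)=V\h$), which is a valid and arguably more transparent way to do the same bookkeeping.
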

\begin{proof}
Let $D$ be a positive and invertible operator in $\copa$.
Clearly $V^*DV$ is invertible; hence by Theorem \ref{Farenick Zhou criterion} and Observation \ref{observation}, we get a co-isometry $U\in\copa$ with initial space $\overline{\ran(D^{1/2})}$ and an invertible $Z\in\bh$ such that $UD^{1/2}V=VZ$.  Note that $\overline{\ran(D^{1/2})}=\h_\pi$ as $D$ is invertible; so $U$ is unitary. Set $S=UD^{1/2}$. Then $S\in\copa$ and $S$ is invertible. Also  $D=S^*S$ and $SVV^*=VV^*SVV^*$ with $V^*SV$ invertible.
Note that
\begin{equation*}
    (V^*S^{-1}V)(V^*SV)=V^*S^{-1}(VV^*SVV^*)V=V^*S^{-1}(SVV^*)V=V^*(S^{-1}S)V=I_\h,
\end{equation*}
and since $V^*SV$ is invertible, it follows that $(V^*SV)^{-1}=V^*S^{-1}V$.
Further
\begin{equation*}
\begin{split}
   [(I_{\h_\pi}-VV^*)S^{-1}V](V^*SV)&=(I_{\h_\pi}-VV^*)S^{-1}(VV^*SVV^*)V=(I_{\h_\pi}-VV^*)S^{-1}(SVV^*)V\\
   &=(I_{\hpi}-VV^*)(SS^{-1}VV^*V)=(I_{\h_\pi}-VV^*)VV^*V=0.
 \end{split}
\end{equation*}
Since $V^*SV$ is invertible, it follows that $(I_{\h_\pi}-VV^*)S^{-1}V=0$; hence $S^{-1}VV^*=VV^*S^{-1}VV^*$. In particular, $S,S^{-1}\in\M$, so we conclude  that $\M$ has factorization in $\copa$.
\end{proof}

We end this section by considering the question of when  a  $\cst$-extreme point is also extreme, and vice versa. If $\h$ is a finite dimensional Hilbert space, then it was shown in \cite{FaMo} that  every $\cst$-extreme point of $\sha$ is extreme as well. Whether this is true for infinite dimensional Hilbert spaces is not known. Conversely, there are examples where an extreme point in $\sha$ is not $\cst$-extreme (see pg. 1470 in \cite{FaZh}). We discuss some sufficient criteria under which  condition of $\cst$-extremity automatically implies extremity. Also see Corollary \ref{direct sum cst extreme is also extreme} below.

\begin{proposition}\label{multiplicity free}
Let $\phi\in\sha$  with minimal Stinespring triple $(\pi,V,\hpi)$ such that $\pi$ is multiplicity-free (i.e. $\copa$ is commutative). If $\phi$ is $\cst$-extreme in $\sha$, then $\phi$ is extreme in $\sha$.
\end{proposition}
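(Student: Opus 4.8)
The plan is to use the Extreme point condition (Theorem~\ref{Extreme point condition}), which says $\phi$ is extreme in $\sha$ if and only if the map $T\mapsto V^*TV$ from $\copa$ to $\bh$ is injective. So I would take a self-adjoint $T\in\copa$ with $V^*TV=0$ and aim to show $T=0$; since $\copa$ is a von Neumann algebra, it suffices to handle self-adjoint elements, and then the claim reduces to showing the kernel of $T\mapsto V^*TV$ is trivial.

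First I would observe that since $\copa$ is commutative, it is generated by its positive elements, and we can perturb: for such a self-adjoint $T$, choose a scalar $\lambda>0$ large enough that $D:=\lambda I_{\hpi}+T$ is positive and invertible in $\copa$. Then $V^*DV=\lambda I_\h+V^*TV=\lambda I_\h$ is invertible. Now I would invoke Corollary~\ref{a C*-extreme criterian in language of factorization of positive operators} (or Corollary~\ref{a factorization property of algebras coming out of C* extreme point}): there exists $S\in\copa$ with $D=S^*S$, $SVV^*=VV^*SVV^*$, and $V^*SV$ invertible. The key point is that in a \emph{commutative} von Neumann algebra, the positive square root $D^{1/2}$ is the \emph{unique} positive element whose square is $D$, and moreover $S^*S=SS^*$ forces (via polar decomposition inside the abelian algebra, where the partial isometry part is a projection-like unitary on the support) that $|S|=D^{1/2}$; more cleanly, one can simply take $S=D^{1/2}$ itself to be a legitimate choice, or argue that the condition $SVV^*=VV^*SVV^*$ can be arranged with $S=D^{1/2}$ because $D\in\copa$ commutes with everything in $\copa$, and one must then check $D^{1/2}$ leaves $V\h$ invariant — this is where the commutativity is genuinely used.

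Here is the cleaner route I would actually pursue. Because $\copa$ is commutative, $D^{1/2}\in\copa$ and $D^{1/2}$ commutes with $VV^*$? No — $VV^*$ need not lie in $\copa$. So instead: from Corollary~\ref{a C*-extreme criterian in language of factorization of positive operators} we get $S\in\copa$, $D=S^*S$, with $S(V\h)\subseteq V\h$ and $S_{|_{V\h}}$ invertible. In the commutative case, write $S=WD^{1/2}$ via polar decomposition with $W$ a partial isometry in $\copa$ and, since $D$ is invertible, $W$ is in fact unitary and $W\in\copa$; then $S$ and $D^{1/2}$ differ by a unitary in the commutative algebra, so $\operatorname{Re}(S)$ and $D^{1/2}$ have comparable behaviour. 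Compute $V^*DV = V^*S^*SV = (SV)^*(SV)$, and since $SV = VZ$ with $Z=V^*SV$ invertible (using $S(V\h)\subseteq V\h$), we get $V^*DV=Z^*Z$. But $V^*DV=\lambda I_\h$, so $Z^*Z=\lambda I_\h$, i.e. $\tfrac{1}{\sqrt\lambda}Z$ is a unitary $U$ on $\h$ with $SV=VZ=\sqrt\lambda\, VU$. Then $V^*SV=\sqrt\lambda U$. Now I would use that $S$ is normal (it lies in the commutative $\copa$) together with $S(V\h)\subseteq V\h$ to show $S$ also leaves $(V\h)^\perp$ invariant, hence $V^*S^*V = (V^*SV)^*$ behaves well and $V^*TV = V^*DV - \lambda I_\h$ combined with $V^*(S^*S)V=Z^*Z$ and the normality forces $T$ restricted appropriately to vanish; more directly, $D = S^*S$ with $V^*DV=\lambda I$ and $SV=\sqrt\lambda VU$ gives $DV = S^*SV = \sqrt\lambda S^*VU$, and pairing with $V$: I can extract that $\|D^{1/2}V h\|^2 = \lambda\|h\|^2$ while also $D\geq$ a positive multiple, pinning $D^{1/2}V=\sqrt\lambda V U'$ for a unitary $U'$, whence $TV h = DVh-\lambda Vh$ lands in a way that, fed back through minimality $[\pi(\A)V\h]=\hpi$ and $T\in\copa$, yields $T=0$.

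The main obstacle is the bookkeeping at the last step: translating "$V^*TV=0$ together with the factorization data and commutativity of $\copa$" into "$T=0$" without circularity. The clean way is probably to bypass the factorization corollaries entirely and argue directly from Theorem~\ref{Farenick Zhou criterion}: given self-adjoint $T\in\copa$ with $V^*TV=0$, form $D=\lambda I+T$, get the unitary $U\in\copa$ and invertible $Z$ with $UD^{1/2}V=VZ$; since $\copa$ is commutative, $U$ and $D^{1/2}$ commute, so $S:=UD^{1/2}=D^{1/2}U$ is normal with $S^*S=SS^*=D$ and $SV=VZ$, $S^*V = VW$ for suitable $W$ (from $S^*S=D$ invertible and the range condition), giving $DV=S^*SV=S^*VZ=VWZ$, so $V^*DV=WZ$; but $V^*DV=\lambda I$, and also $D\geq 0$ with $[\pi(\A)V\h]=\hpi$ forces $D^{1/2}V\h$ to be "spread out", and a Cauchy--Schwarz/rank argument on $D^{1/2}$ normal with $D^{1/2}V=$ (isometry up to scalar) shows $D^{1/2}$ acts as $\sqrt\lambda$ times a unitary on $[\pi(\A)V\h]=\hpi$, i.e. $D=\lambda I$, i.e. $T=0$. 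I expect that making this "spread out" argument rigorous — essentially that a normal operator $N$ with $N V$ a scalar multiple of an isometry and $[\text{generated space}]$ full must itself be that scalar times a unitary — is the crux, and it is exactly where commutativity/normality of $\copa$ (hence of $S$) is indispensable.
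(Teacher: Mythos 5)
Your overall strategy is the same as the paper's: take a self-adjoint $T\in\copa$ with $V^*TV=0$, perturb to a positive invertible $D=\lambda I_{\hpi}+T$, apply Corollary \ref{a factorization property of algebras coming out of C* extreme point} to get $S\in\copa$ with $D=S^*S$, $SVV^*=VV^*SVV^*$ and $V^*SV$ invertible, exploit normality of $S$ (from commutativity of $\copa$) and conclude via minimality of $[\pi(\A)V\h]$. However, there is a genuine gap at the decisive step, and you essentially flag it yourself. Everything hinges on upgrading ``$S$ leaves $V\h$ invariant'' to ``$S$ commutes with $VV^*$'' (equivalently, $S^*$ also leaves $V\h$ invariant), after which $T=D-\lambda I_{\hpi}$ commutes with $VV^*$, so $TV=VV^*TV=V(V^*TV)=0$ and minimality gives $T=0$. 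Your proposed justifications for this upgrade do not work as stated: the claim that a normal operator leaving a subspace invariant must leave its orthocomplement invariant is false in general (the bilateral shift is unitary and leaves $H^2$ invariant without reducing it), and the alternative ``take $S=D^{1/2}$'' is not legitimate because nothing forces the positive square root $D^{1/2}$ itself to map $V\h$ into $V\h$ --- the whole content of the $C^*$-extreme condition is that some twisted square root $UD^{1/2}$ does. The ``spread out'' lemma you propose at the end ($V^*DV=\lambda I_\h$ alone pinning down $D$) cannot be right either, since $V^*DV=\lambda I_\h$ by itself never implies $D=\lambda I_{\hpi}$ (that would make every UCP map extreme).

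The paper closes the gap with a short quantitative computation that uses \emph{both} pieces of information simultaneously. First, from $SVV^*=VV^*SVV^*$ one gets
\begin{equation*}
(V^*SV)^*(V^*SV)=V^*S^*(VV^*SVV^*)V=V^*S^*SV=V^*DV=\lambda I_\h,
\end{equation*}
so $\tfrac{1}{\sqrt\lambda}V^*SV$ is an invertible isometry, hence unitary, giving $(V^*SV)(V^*SV)^*=\lambda I_\h$ as well. Second, commutativity gives $SS^*=S^*S=D$, so $V^*SS^*V=\lambda I_\h$ too, and therefore
\begin{equation*}
\bigl[V^*S(I_{\hpi}-VV^*)\bigr]\bigl[V^*S(I_{\hpi}-VV^*)\bigr]^*=V^*SS^*V-(V^*SV)(V^*SV)^*=0 .
\end{equation*}
Hence $V^*S(I_{\hpi}-VV^*)=0$, i.e.\ $VV^*S=VV^*SVV^*=SVV^*$. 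This is exactly the norm identity $\|S^*Vh\|^2=\lambda\|h\|^2=\|VV^*S^*Vh\|^2$ you were circling around with Cauchy--Schwarz; without it your argument does not terminate. I recommend you replace the heuristic appeals to normality and ``spread'' by this computation; the rest of your outline then matches the paper's proof.
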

\begin{proof}
To show $\phi$ is extreme in $\sha$, we use Arveson's extreme point condition
(Theorem \ref{Extreme point condition}). Let $D$ be a self-adjoint
operator  in $\copa$ such that $V^*DV=0$. By multiplying by a small
enough scalar,  we assume without loss of generality that
$-\frac{1}{2}I_{\hpi}\leq D\leq\frac{1}{2}I_{\hpi}$. Then
$D+I_{\hpi}$ is positive and invertible. By Corollary \ref{a
factorization property of algebras coming out of C* extreme point},
there exists an invertible $S\in\copa$ satisfying $SVV^*=VV^*SVV^*$
with  $V^*SV$  invertible such that  $D+I_{\hpi}=S^*S$.
Thus we have
\begin{equation*}
    (V^*SV)^*(V^*SV)=V^*S^*(VV^*SVV^*)V=V^*S^*(SVV^*)V=V^*S^*SV=V^*DV+V^*V=I_\h,
\end{equation*}
and since $V^*SV$ is invertible, it follows that $V^*SV$ is  unitary, that is, $V^*SVV^*S^*V=I_\h.$
Further as $\copa$ is commutative by hypothesis, we have $SS^*=S^*S=D+I_{\hpi}$; hence $V^*SS^*V=V^*(D+I_{\hpi})V=I_\h$. Therefore we get
\begin{equation*}
    [V^*S(I_{\hpi}-VV^*)][V^*S(I_{\hpi}-VV^*)]^*= V^*S(I_{\hpi}-VV^*)S^*V=V^*SS^*V-V^*SVV^*S^*V=0.
\end{equation*}
This implies $V^*S(I_{\hpi}-VV^*)=0$, which further yields
\begin{equation*}
      VV^*S=VV^*SVV^*=SVV^*.
\end{equation*}
In other words, $S$ commutes with $VV^*$ which also implies that $S^*$ commutes with $VV^*$; hence $D$ commutes with $VV^*$. Therefore, we have $DV=DVV^*V=VV^*DV=0.$
But then $D\pi(\A)V=\pi(\A)DV=0$ and since $\pi(\A)V\h$ is dense in $\hpi$, we conclude that $D=0$. Since $D$ is arbitrary, this proves that $\phi$ is extreme in $\sha$.
\end{proof}

\section{Direct sums of pure UCP maps}\label{section:direct sum of pure UCP maps}

The  question of whether the direct sum of two $\cst$-extreme  points is also $\cst$-extreme  is very natural. For the case when the Hilbert space  is finite dimensional, a necessary and sufficient criterion for the validity of the assertion is known due to Farenick-Zhou \cite{FaZh}. In fact if $\A$ is a unital $\cst$-algebra and $n\in\N$, then every $\cst$-extreme point in $S_{\C^n}(\A)$ is a direct sum of pure UCP maps (Theorem 2.1, \cite{FaMo}), so the question reduces to finding conditions under which direct sums of pure UCP maps are $\cst$-extreme (which was exploited in \cite{FaZh}). But it is no longer the case in infinite dimensional Hilbert space settings that a $\cst$-extreme point is a direct sum of pure UCP maps (see Example 2, \cite{FaMo}).
Nevertheless, finding  criteria for a direct sum of pure UCP maps in $\sha$ (for $\h$ infinite dimensional) to be $\cst$-extreme is interesting in its own right. In this section, we provide a complete characterization  for such maps to be $\cst$-extreme. One of the main applications of this description would be in proving Krein-Milman type theorem in Section \ref{section:Krein-Milman theorem}.

We begin with some general properties of $\cst$-extremity under direct sums. In the rest of the article, $\Lambda$ will usually be a countable indexing set for a family of maps or subspaces. For any family $\{\phi_i:\A\to\B(\h_i)\}_{i\in\Lambda}$ of UCP maps, their {\em direct sum} $\oplus_{i\in\Lambda}\phi_i$
is the UCP map from $\A$ to $\B(\oplus_{i\in\Lambda}\h_i)$ defined by $(\oplus_{i\in\Lambda}\phi_i)(a)=\oplus_{i\in\Lambda}\phi_i(a)$, for all $a\in\A$. The following remark records the minimal Stinespring triple for a direct sum of UCP maps, which is  easy to verify.

\begin{remark}\label{minimal Stinespring of direct sum}
Let $\phi_i:\A\to\mathcal{B}(\h_i)$, $i\in\Lambda$, be a collection of UCP maps with respective minimal Stinespring triple $(\pi_i,V_i, \K_i)$.
Then  the minimal Stinespring triple for $\oplus_{i\in\Lambda}\phi_i$ is given by $(\pi,V,\K)$, 
where $\K=\oplus_{i\in\Lambda}\K_i$, $V=\oplus_{i\in\Lambda}V_i$ and $\pi=\oplus_{i\in\Lambda}\pi_i$.
\end{remark}

We now recall some  notions relevant to our results.
If $\pi:\A\to\B(\hpi)$ is a representation, and $\K\subseteq\hpi$ is a subspace invariant (and hence reducing) under $\pi(a)$ for all $a\in\A$, then the map $a\mapsto\pi(a)_{|_{\K}}$ is a representation from $\A$ to $\bk$, called {\em sub-representation} of $\pi$.
Two representations $\pi_i:\A\to\B(\h_{\pi_i})$, $i=1,2$, are said to be {\em disjoint} if no non-zero sub-representation of $\pi_1$ is unitarily equivalent to any sub-representation of $\pi_2$.  We shall use the following fact about disjoint representations (see Proposition 2.1.4, \cite{Arbook}): If $\pi_1$ and $\pi_2$ are disjoint representations  and $S\pi_1(a)=\pi_2(a)S$, for all $a\in\A$, for some $S\in\B(\h_{\pi_1},\h_{\pi_2})$, then $S=0$.

A representation $\pi$ is called {\em irreducible} if it has no non-zero sub-representation (equivalently, $\pi(\A)'=\C\cdot I_{\hpi}$). Note that if $\pi_1$ and $\pi_2$ are two non-unitarily equivalent irreducible representations, then $\pi_1(\cdot)\otimes I_{\K_1}$ and $\pi_2(\cdot)\otimes I_{\K_2}$ are disjoint representations (for any Hilbert spaces $\K_1$ and $\K_2$).

We recollect some more terminologies from CP map theory. A completely positive map $\phi$ is called {\em pure} if whenever $\psi$ is a completely positive map with $\psi\leq \phi$, then $\psi=\lambda\phi$ for some $\lambda\in[0,1]$. It is easy to verify that if $(\pi,V,\hpi)$ is the minimal Stinespring triple of a completely positive map $\phi$, then $\phi$ is pure if and only if $\pi$ is irreducible
 (Corollary 1.4.3, \cite{Ar69}). All pure UCP maps are known to be $\cst$-extreme as well as extreme points of $\sha$ (Proposition 1.2, \cite{FaMo}).

Let $\phi_i:\A\to\mathcal{B}(\h_i)$, $i=1,2$, be two UCP maps. We say $\phi_2$ is a \textit{compression} of $\phi_1$ if there exists an isometry $W:\h_2\to\h_1$ such that $\phi_2(a)=W^*\phi_1(a)W$, for all $a\in\A$.
If $\phi$ is a pure UCP map  with the minimal Stinespring triple $(\pi,V,\hpi)$, and $\psi=W^*\phi(\cdot)W$ is a compression of $\phi$ for some isometry $W$, then  $(\pi,VW,\hpi)$ is the minimal Stinespring triple for $\psi$, and so $\psi$ is pure. This follows from the fact that $\pi(\A)'=\C\cdot I_{\hpi}$, so that $\pi(\A)''=\B(\hpi)$, which further yields \[[\pi(\A)VW\h]=[\pi(\A)''VW\h]=[\B(\hpi)VW\h]=\hpi.\]
Moreover, if $(\pi, V_i,\hpi)$ is the minimal Stinespring triple of  UCP maps $\phi_i$, $i=1,2$ (i.e. both $\phi_1,\phi_2$ are compression of the same representation $\pi$), then one can easily show that $\phi_2$ is a compression of $\phi_1$ if and only if $V_2V_2^*\leq V_1V_1^*$ i.e. $\ran(V_2)\subseteq\ran(V_1)$.

Inspired from the notion of disjointness of representations, we define the same for UCP maps as follows. One can see this notion being considered for pure maps in \cite{FaMo}.

\begin{definition}
For any two UCP maps $\phi_i:\A\to\B(\h_i)$, $i=1,2$ with respective minimal Stinespring triple $(\pi_i,V_i,\h_{\pi_i})$, we say $\phi_1$ is {\em disjoint} to $\phi_2$ if $\pi_1$ and  $\pi_2$ are disjoint representations.
\end{definition}

The major results of this paper deal with finding  conditions under which direct sums of mutually disjoint UCP maps (especially, pure maps) are $\cst$-extreme.
The next lemma and proposition are the first step in this direction.

It should be remarked that for  a  family of Hilbert spaces $\{\H_i\}_{i\in\Lambda}$, an operator $T$ in $\B(\oplus_{i\in\Lambda}\h_i)$ will also be written in the matrix form $[T_{ij}]$, for some $T_{ij}\in\B(\h_j,\h_i)$.

\begin{lemma}\label{commutant of direct sum of disjoint representations}
Let $\pi_i:\A\to\B(\K_i)$, $i\in\Lambda$, be a collection of mutually disjoint representations. If $\pi=\oplus_{i\in\Lambda}\pi_i$,
then $\pi(\A)'=\{\oplus_{i\in\Lambda}T_i;~ T_i\in\pi_i(\A)'\}$.
\end{lemma}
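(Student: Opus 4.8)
The plan is to prove the two inclusions separately. The inclusion $\{\oplus_{i\in\Lambda}T_i;~T_i\in\pi_i(\A)'\}\subseteq\pi(\A)'$ is immediate: a block-diagonal operator whose $i$-th block commutes with $\pi_i(a)$ clearly commutes with $\pi(a)=\oplus_{i\in\Lambda}\pi_i(a)$ for every $a\in\A$. The substance is the reverse inclusion, and for this I would write an arbitrary $T\in\pi(\A)'$ in matrix form $T=[T_{ij}]$ with $T_{ij}\in\B(\K_j,\K_i)$, using the convention introduced just before the lemma. The commutation relation $T\pi(a)=\pi(a)T$, read entrywise, becomes $T_{ij}\pi_j(a)=\pi_i(a)T_{ij}$ for all $a\in\A$ and all $i,j\in\Lambda$.

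The key step is then to invoke the fact about disjoint representations recalled in the excerpt (from Proposition 2.1.4 of \cite{Arbook}): if $S\pi_1(a)=\pi_2(a)S$ for all $a\in\A$ and $\pi_1,\pi_2$ are disjoint, then $S=0$. Applying this with $\pi_1=\pi_j$, $\pi_2=\pi_i$, and $S=T_{ij}$ gives $T_{ij}=0$ whenever $i\neq j$, since the family is mutually disjoint. Hence $T$ is block-diagonal, $T=\oplus_{i\in\Lambda}T_{ii}$, and the surviving relations $T_{ii}\pi_i(a)=\pi_i(a)T_{ii}$ say exactly that $T_{ii}\in\pi_i(\A)'$ for each $i$. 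This shows $T$ lies in the claimed set and completes the proof.

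I do not anticipate a genuine obstacle here; the only point requiring a little care is the bookkeeping for an infinite (countable) index set $\Lambda$ — one should note that writing $T\in\B(\oplus_{i\in\Lambda}\K_i)$ in matrix form $[T_{ij}]$ and reading off the entrywise commutation relations is legitimate because the matrix entries are recovered as $T_{ij}=P_{\K_i}T_{|_{\K_j}}$ (compressions by the coordinate projections), and each $P_{\K_i}$ commutes with $\pi(a)$. Thus the entrywise identities $T_{ij}\pi_j(a)=\pi_i(a)T_{ij}$ follow rigorously without any convergence subtleties. With that remark in place, the argument is a direct and short application of disjointness, so I would present it in just a few lines.
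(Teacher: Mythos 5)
Your proof is correct and follows essentially the same route as the paper: write $T\in\pi(\A)'$ in matrix form, read off the entrywise relations $T_{ij}\pi_j(a)=\pi_i(a)T_{ij}$, and apply the disjointness fact to kill the off-diagonal entries. The extra remark about recovering $T_{ij}$ as compressions by coordinate projections is a nice clarification but not a departure from the paper's argument.
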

\begin{proof}
Let $S\in\pi(\A)'\subseteq\B(\oplus_{i\in\Lambda}\K_i)$. Then $S=[S_{ij}]$ for some $S_{ij}\in\B(\K_j,\K_i)$, such that for all $a\in\A$, we have $[S_{ij}](\oplus_{i\in\Lambda}\pi_i(a))=(\oplus_{i\in\Lambda}\pi_i(a))[S_{ij}]$; hence $S_{ij}\pi_j(a)=\pi_i(a)S_{ij}$ for all $i,j$. For $i\neq j$, since $\pi_i$ is disjoint to $\pi_j$, it follows (from above mentioned result) that $S_{ij}=0$. Also for each $i$,  $S_{ii}\pi_i(a)=\pi_i(a)S_{ii}$ for $a\in\A$, implies that $S_{ii}\in\pi_i(\A)'$. Thus $S=\oplus_{i\in\Lambda}S_{ii}$, where  $S_{ii}\in\pi_i(\A)'$. This shows that $\pi(\A)'\subseteq\{\oplus_{i\in\Lambda}T_i; T_i\in\pi_i(\A)'\}$. The other inclusion is obvious.
\end{proof}

\begin{proposition}\label{direct sum of disjoint C*-extreme maps}
Let $\{\phi_i:\A\to\B(\h_i)\}_{i\in\Lambda}$ be a collection of mutually disjoint UCP maps.
Then $\phi=\oplus_{i\in\Lambda}\phi_i$ is $\cst$-extreme (resp.  extreme) in $ S_{\oplus_{i\in\Lambda}\h_i}(\A)$ if and only if each $\phi_i$ is $\cst$-extreme (resp.  extreme) in $ S_{\h_i}(\A)$.
\end{proposition}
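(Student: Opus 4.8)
The plan is to use the abstract characterization of $\cst$-extreme points from Theorem \ref{Farenick Zhou criterion} (or equivalently Corollary \ref{a C*-extreme criterian in language of factorization of positive operators}), together with the description of the commutant of a direct sum of mutually disjoint representations from Lemma \ref{commutant of direct sum of disjoint representations}. The point is that disjointness makes the relevant von Neumann algebra $\copa$ completely decompose as a direct sum, so the factorization-type conditions defining $\cst$-extremity split coordinate-wise. First I would fix the minimal Stinespring triples: let $(\pi_i, V_i, \K_i)$ be the minimal Stinespring triple of $\phi_i$, so that by Remark \ref{minimal Stinespring of direct sum} the triple $(\pi, V, \K)$ with $\pi = \oplus_i \pi_i$, $V = \oplus_i V_i$, $\K = \oplus_i \K_i$ is the minimal Stinespring triple of $\phi$. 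By Lemma \ref{commutant of direct sum of disjoint representations}, every element of $\copa$ has the form $\oplus_i T_i$ with $T_i \in \pi_i(\A)'$, and it is positive (resp. invertible, a partial isometry, etc.) if and only if each $T_i$ is. Also $V^*(\oplus_i T_i)V = \oplus_i V_i^* T_i V_i$, which is invertible iff each $V_i^* T_i V_i$ is invertible.

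For the forward direction ($\phi$ $\cst$-extreme $\implies$ each $\phi_i$ $\cst$-extreme), I would use Corollary \ref{a C*-extreme criterian in language of factorization of positive operators}. Fix an index $j$ and take a positive $D_j \in \pi_j(\A)'$ with $V_j^* D_j V_j$ invertible. Form $D = \oplus_i D_i \in \copa$ by putting $D_i = I_{\K_i}$ for $i \ne j$ (note $V_i^* I_{\K_i} V_i = I_{\h_i}$ is invertible); then $V^* D V$ is invertible, so by the corollary there is $S = \oplus_i S_i \in \copa$ with $D = S^*S$, $SVV^* = VV^*SVV^*$, and $V^*SV$ invertible. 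Reading off the $j$-th coordinate gives $D_j = S_j^* S_j$, $S_j V_j V_j^* = V_j V_j^* S_j V_j V_j^*$, and $V_j^* S_j V_j$ invertible, so $\phi_j$ is $\cst$-extreme by the same corollary. For the converse direction, given positive $D = \oplus_i D_i \in \copa$ with $V^*DV = \oplus_i V_i^* D_i V_i$ invertible, each $V_i^* D_i V_i$ is invertible; applying the corollary to each $\phi_i$ produces $S_i \in \pi_i(\A)'$ with $D_i = S_i^* S_i$, $S_i V_i V_i^* = V_i V_i^* S_i V_i V_i^*$, $V_i^* S_i V_i$ invertible. Here I need to be a little careful about uniform boundedness so that $S = \oplus_i S_i$ actually defines a bounded operator; the norm estimate in Observation \ref{observation}, namely $\|(V_i^* S_i V_i)^{-1}\|^2 = \|(V_i^* D_i V_i)^{-1}\| \le \|(V^*DV)^{-1}\|$, together with $\|S_i\|^2 = \|D_i\| \le \|D\|$, gives the needed uniform bound on $\|S_i\|$ (and one can likewise bound $\|S_i^{-1}\|$ if invertibility of $S$ were needed, though here it is not). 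Then $S = \oplus_i S_i \in \copa$ satisfies $D = S^*S$, $SVV^* = VV^*SVV^*$, $V^*SV$ invertible, so $\phi$ is $\cst$-extreme.

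For the ``extreme'' statement in parentheses I would run the analogous argument using Arveson's extreme point condition, Theorem \ref{Extreme point condition}: $\phi$ is extreme iff $T \mapsto V^*TV$ on $\copa$ is injective. Under the direct sum decomposition $\copa = \{\oplus_i T_i : T_i \in \pi_i(\A)'\}$ and $V^*(\oplus_i T_i) V = \oplus_i V_i^* T_i V_i$, injectivity of the global map is clearly equivalent to injectivity of each coordinate map $T_i \mapsto V_i^* T_i V_i$ on $\pi_i(\A)'$, giving the equivalence at once.

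The main obstacle, and the only genuinely non-routine point, is the boundedness of $S = \oplus_i S_i$ in the converse direction: the per-coordinate factorizations come with no a priori uniform control, and one must extract it from the norm identities recorded in Observation \ref{observation}. Everything else is a direct bookkeeping application of Lemma \ref{commutant of direct sum of disjoint representations} and Corollary \ref{a C*-extreme criterian in language of factorization of positive operators}. (One should also note at the outset that mutual disjointness is exactly what is needed for Lemma \ref{commutant of direct sum of disjoint representations} to apply and hence for the off-diagonal entries of $\copa$ to vanish; without it the argument breaks, which is why the hypothesis appears.)
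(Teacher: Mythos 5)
Your proposal is correct and follows essentially the same route as the paper's proof: it uses Lemma \ref{commutant of direct sum of disjoint representations} to decompose $\pi(\A)'$ coordinate-wise, applies Corollary \ref{a C*-extreme criterian in language of factorization of positive operators} in each coordinate (padding with identities for the forward direction), and invokes the norm identity from Observation \ref{observation} to get the uniform bound making $\oplus_i S_i$ work globally, exactly as the paper does. Your explicit remark that $\|S_i\|^2=\|D_i\|\le\|D\|$ guarantees boundedness of $\oplus_i S_i$ is a small point the paper leaves implicit, but otherwise the two arguments coincide, including the treatment of the extreme-point case via Theorem \ref{Extreme point condition}.
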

\begin{proof}
Let $(\pi_i,V_i,\K_i)$ be the minimal Stinespring triple for $\phi_i, i\in \Lambda$. Then as noted in Remark \ref{minimal Stinespring of direct sum}, $(\pi,\K, V)$ is the minimal Stinespring triple for $\phi$, where $\K=\oplus_{i\in\Lambda}\K_i$,  $\pi=\oplus_{i\in\Lambda}\pi_i$, and $V=\oplus_{i\in\Lambda}V_i$. Since $\pi_i$ is disjoint to $\pi_j$ for $i\neq j$, it follows from  Lemma \ref{commutant of direct sum of disjoint representations} that
\begin{equation}\label{eq:expression for copa}
\pi(\A)'=\{\oplus_{i\in\Lambda}T_i; T_i\in\pi_i(\A)'\}\subseteq \B(\oplus_{i\in\Lambda}\K_i).
\end{equation}
To prove the equivalent criteria for $\cst$-extremity, we shall use Corollary \ref{a C*-extreme criterian in language of factorization of positive operators}. Assume first that each $\phi_i$ is $C^*$-extreme in $S_{\h_i}(\A)$. Let $D\in\pi(\A)'$ be positive such that $V^*DV$ is invertible. Then it follows from \eqref{eq:expression for copa} that $D=\oplus_{i\in\Lambda}D_i$ for some $D_i\in\pi_i(\A)'$, and hence  $V^*DV=\oplus_{i\in\Lambda}V_i^*D_iV_i$. Clearly each $D_i$ is positive such that $V_i^*D_iV_i$ is invertible satisfying $\sup_{i\in\Lambda}\|(V^*_iD_iV_i)^{-1}\|=\|(V^*DV)^{-1}\|.$ Since each $\phi_i$ is $\cst$-extreme, there exists an operator $S_i\in\pi_i(\A)'$ such that $D_i=S_i^*S_i$, $S_iV_iV_i^*=V_iV_i^*S_iV_iV_i^*$ and $V_i^*S_iV_i$ is invertible. Set $S=\oplus_{i\in\Lambda}S_i$. It is then immediate that $S\in\pi(\A)'$, $D=S^*S$ and $SVV^*=VV^*SVV^*$. Also from Observation \ref{observation}, it follows that
\[\sup_{i\in\Lambda}\|(V_i^*S_iV_i)^{-1}\|^2=\sup_{i\in\Lambda}\|(V_iD_iV_i)^{-1}\|=\|(V^*DV)^{-1}\|<\infty,\] which implies that $V^*SV=\oplus_{i\in\Lambda}V_i^*S_iV_i$ is invertible. Since $D$ is arbitrary, it follows that $\oplus_{i\in\Lambda}\phi_i$ is $\cst$-extreme.

Conversely, let $\oplus_{i\in\Lambda}\phi_i$ be $\cst$-extreme.  Fix
$j\in\Lambda$, and let $D_j\in\pi_j(\A)'$ be a positive operator
such that $V_j^*D_jV_j$ is invertible. For $i\neq j$, let $D_i=I_{\K_i}$
and set $D=\oplus_{i\in\Lambda} D_i$. It is clear that $D\in\pi(\A)'$. Also $D$
is positive and $V^*DV$ is invertible, as each $V_i^*D_iV_i$ is
invertible whose inverse is uniformly bounded. Since $\oplus_{i\in\Lambda}\phi_i$ is
$\cst$-extreme, there is an operator $S\in\pi(\A)'$ such that
$D=S^*S$, $SVV^*=VV^*SVV^*$ and $V^*SV$ is invertible. Again from
\eqref{eq:expression for copa}, we have $S=\oplus_{i\in\Lambda}S_i$ for some
$S_i\in\pi_i(\A)'$. Then the expressions $D=S^*S$ and
$SVV^*=VV^*SVV^*$ imply respectively that $D_j=S_j^*S_j$ and
$S_jV_jV_j^*=V_jV_j^*S_jV_jV_j^*$. Also invertibility of $V^*SV$
implies that $V_j^*S_jV_j$ is invertible. Since $D_j$ is arbitrary,
we conclude that $\phi_j$ is $\cst$-extreme in $S_{\h_j}(\A)$. The
case of equivalence of extreme points can  be proved in a similar
fashion using Arveson's  extreme point criterion (Theorem \ref{Extreme point condition}).
\end{proof}

Some of the subsequent results about direct sums of pure maps involve a strong  connection of their $\cst$-extremity conditions  with the theory of nests of subspaces and corresponding nest algebras. To this end, we recall the basics of nest algebra theory. A family $\E$ of subspaces of a Hilbert space $\h$ is called a {\em nest} if $\E$ is totally ordered by inclusion (i.e. $E\subseteq F$ or $F\subseteq E$ for any $E,F\in\E$). The nest $\E$ is {\em complete} if $0,\h\in\E$, and  $\bigwedge_{F\in\mathcal{F}}F\in\E$ and $\bigvee_{F\in\mathcal{F}}F\in\E$ for any subnest $\mathcal{F}$ of $\E$. Note that any nest $\E$ can be extended   by adjoining $\{0\}, \h$, and  $\wedge$ and $\vee$ of arbitrary subfamily to get the smallest complete nest containing $\E$, which we call the {\em completion} of $\E$ (see Lemma 2.2, \cite{Ri}).  For any nest $\E$, let $\Alg\E$ denote the collection of all operators in $\bh$ which leave subspaces of $\E$ invariant i.e.
\[\Alg\E=\{T\in\bh; T(E)\subseteq E~ \mbox{ for all }E\in\E\}.\]
Clearly $\Alg\E$ is a unital closed algebra, called the {\em nest
algebra} associated with $\E$. If $\overline{\E}$ denotes the
completion of a nest $\E$, then one can readily verify  that
$\Alg\overline{\E}=\Alg\E$. The following theorem  of Larson
\cite{La} about factorization property of certain nest algebras is
very crucial to our results on $\cst$-extreme points. Recall
Definition \ref{definition of algebras with factorization} for
algebras having factorization.

\begin{theorem}[Theorem 4.7, \cite{La}]\label{Larson's factorization result}
Let $\E$ be a  nest in a separable Hilbert space $\h$. Then  $\Alg\E$ has factorization in $\bh$  if and only if the completion $\overline{\E}$ of the nest $\E$ is countable.
\end{theorem}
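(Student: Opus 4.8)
\emph{Overall strategy.} The plan is to phrase everything in terms of the order structure of the completion $\overline{\E}$ — note $\Alg\E=\Alg\overline{\E}$ as subalgebras of $\bh$, so the factorization property is literally identical for $\E$ and $\overline{\E}$ — and to split into the ``if'' direction, proved by a transfinite Cholesky-type construction, and the ``only if'' direction, proved by contraposition. The enabling observation is a dichotomy valid because $\h$ is separable: a strictly monotone chain in $\overline{\E}$ is automatically countable (successive differences are mutually orthogonal nonzero subspaces), so a scattered complete chain with no uncountable monotone subchain is itself countable; thus $\overline{\E}$ is countable if and only if it is scattered, and if $\overline{\E}$ is uncountable it is \emph{non}-scattered, i.e.\ it contains a subchain order-isomorphic to $\Q$. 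I would also record that factorization passes to the reversed nest $\E^{\perp}=\{\h\ominus E:E\in\E\}$: since $(\Alg\E)^{*}=\Alg\E^{\perp}$, Proposition~\ref{M* has factorization} applied to $\M=\Alg\E$ does it (or directly: given positive invertible $D$, factor $D^{-1}=R^{*}R$ inside $\Alg\E$ and invert).

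\emph{The ``if'' direction.} I would assume $\overline{\E}$ countable, hence scattered, and induct on its Hausdorff rank. The base case is a single atom ($\overline{\E}=\{0,\h\}$, $\Alg\overline{\E}=\bh$), where $D=(D^{1/2})^{*}D^{1/2}$. For the inductive step, Hausdorff's classification writes $\overline{\E}$ as a finite, $\omega$-, $\omega^{*}$- or $\Z$-sum of intervals of strictly smaller rank, each a complete nest on a reducing subspace to which the hypothesis applies; so it suffices to see factorization passes through these sums. \emph{Binary sum} $\h=\h_{1}\oplus\h_{2}$ with $\h_{1}\in\overline{\E}$: in $2\times2$ block form, factor $D_{11}$ via the first nest, let the $(1,2)$-entry of $S$ be the one forced by $S_{1}^{*}X=D_{12}$, and factor the Schur complement $D_{22}-D_{21}D_{11}^{-1}D_{12}$ via the second; Schur complements of an operator between $mI$ and $MI$ again lie between $mI$ and $MI$, so the blocks exist and assemble into invertible $S,S^{-1}\in\Alg\overline{\E}$. \emph{$\omega$-sum}: iterate; the $n$-th partial factor $S^{(n)}$ (zero past the first $n$ summands) satisfies $0\le(S^{(n)})^{*}S^{(n)}\le D$ and its restriction to the first $n$ summands factors the compression $P_{n}DP_{n}$, whence $\|S^{(n)}\|\le\|D\|^{1/2}$ and the finite truncations' inverses are bounded by $\|D^{-1}\|^{1/2}$, uniformly in $n$; so $S^{(n)}\to S$ strongly with $S,S^{-1}$ bounded and triangular, and passing to strong limits in $(S^{(n)})^{*}S^{(n)}=D-(0\oplus R_{n})$ (the residual $0\oplus R_{n}\to0$ strongly) gives $S^{*}S=D$. \emph{$\omega^{*}$-sum}: reverse and use the remark above; $\Z$-sums are binary sums of the two. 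This closes the induction.

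\emph{The ``only if'' direction.} I would argue by contraposition: suppose $\overline{\E}$ uncountable, so it contains a $\Q$-subchain; let $E_{0}\subsetneq E_{1}$ in $\overline{\E}$ be its infimum and supremum. Then $\mathcal{D}:=\{E\in\overline{\E}:E_{0}\subseteq E\subseteq E_{1}\}$ restricts to a complete non-scattered nest on $\mathcal{K}:=E_{1}\ominus E_{0}$, and here one invokes the classical fact that $\Alg\mathcal{D}$ does not have factorization: there is a positive invertible $\bar D$ on $\mathcal{K}$ — constructed by a summation/integration tailored so that a triangular factor would have to solve an operator Riccati equation, equivalently possess a bounded triangular ``logarithm'', with no bounded solution, a Szeg\"o-type divergence — admitting no factorization $\bar D=\bar S^{*}\bar S$ with $\bar S,\bar S^{-1}\in\Alg\mathcal{D}$. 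Set $D:=I_{E_{0}}\oplus\bar D\oplus I_{E_{1}^{\perp}}$ on $\h=E_{0}\oplus\mathcal{K}\oplus E_{1}^{\perp}$. If $D=S^{*}S$ with $S,S^{-1}\in\Alg\overline{\E}$, then $S$ and $S^{-1}$ carry $E_{0}$ onto $E_{0}$ and $E_{1}$ onto $E_{1}$, so $S$ is block upper-triangular in this $3\times3$ decomposition; comparing blocks, the $(1,1)$-block of $S$ is unitary, the vanishing $(1,2)$- and $(1,3)$-blocks of $D$ force $S_{12}=S_{13}=0$, and the $(2,2)$-entry then reads $\bar D=S_{22}^{*}S_{22}$ with $S_{22}=P_{\mathcal{K}}S|_{\mathcal{K}}\in\Alg\mathcal{D}$ invertible and $S_{22}^{-1}=P_{\mathcal{K}}S^{-1}|_{\mathcal{K}}\in\Alg\mathcal{D}$ — a forbidden factorization of $\bar D$. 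Hence $\Alg\overline{\E}$ has no factorization.

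\emph{Main obstacle.} On the ``if'' side the only subtle point is strong-operator convergence at the $\omega$-sum (and limit-rank) stages, and it is disarmed automatically by the a priori bounds $\|S^{(n)}\|\le\|D\|^{1/2}$ and $\|(S^{(n)})^{-1}\|\le\|D^{-1}\|^{1/2}$ coming from the squeeze $0\le(S^{(n)})^{*}S^{(n)}\le D$ together with the compression identity. The real content — and the step I expect to be hardest — is the ``only if'' direction, specifically the construction of a positive invertible operator on a non-scattered nest admitting no triangular factorization; by comparison, the reduction of the whole theorem to that single fact via the $3\times3$-block extension is essentially formal.
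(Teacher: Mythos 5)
First, a point of order: the paper does not prove this statement at all --- it is quoted as Theorem 4.7 of Larson \cite{La} and used as a black box --- so there is no in-paper argument to compare your route against; your proposal has to stand as a proof of Larson's theorem itself. Judged that way, your reductions are largely sound. The dichotomy (countable iff scattered, uncountable implies a $\Q$-subchain) is correct, though your stated reason is not: an arbitrary strictly monotone chain in a separable Hilbert space need \emph{not} be countable (the Volterra nest $\{L^2[0,t]\}_{t\in[0,1]}$ is a counterexample); the correct statement is that every \emph{well-ordered} or reverse-well-ordered subchain is countable, and one then either invokes Hausdorff's structure theory or, more simply, order-embeds $\overline{\E}$ into $\R$ via $E\mapsto\sum_n 2^{-n}\|P_E h_n\|^2$ for a countable dense set $\{h_n\}$ and uses that scattered suborders of $\R$ are countable. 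The $3\times3$ block reduction in the ``only if'' direction and the remark $(\Alg\E)^*=\Alg\E^{\perp}$ are both correct, and the ``if'' direction (induction on Hausdorff rank with Cholesky steps, Schur complements, and uniformly bounded strong limits for $\omega$-sums) is a workable outline of the easier half of the theorem.

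The genuine gap is exactly where you flagged it, but it is larger than you allow: the assertion that for a complete non-scattered nest $\mathcal{D}$ there is a positive invertible operator with no factorization $\bar S^*\bar S$, $\bar S,\bar S^{-1}\in\Alg\mathcal{D}$, is not a ``classical fact'' external to the theorem --- it \emph{is} the hard half of Larson's theorem, and it cannot be discharged by gesturing at a Riccati equation or a Szeg\"o-type divergence (indeed the scalar Szeg\"o phenomenon points the other way: $H^\infty(\T)$ \emph{does} have factorization in $L^\infty(\T)$, as the paper's Corollary \ref{factorization property of hardy algebra} records). No explicit ``divergent series'' counterexample of the kind you describe is known; the proofs in the literature (Larson's original argument, and the treatment in Davidson's book \cite{Da}) proceed by relating factorization of $T^*T$ to whether the natural order isomorphism from $\E$ onto the nest $\{\overline{T E}\}_{E\in\E}$ is unitarily implemented, and then use Andersen's approximation theorem and a delicate construction to produce an invertible operator for which it is not. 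Since every ounce of content in the ``only if'' direction is concentrated in precisely the step you leave as an invocation, the proposal does not constitute a proof of that direction.
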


We are now ready to prove  the main result of this section, which  gives necessary and sufficient criteria for direct sums of UCP maps to be $\cst$-extreme. This generalizes a result of Farenick and Zhou (Theorem 2.1, \cite{FaZh}) from finite to infinite dimensional Hilbert spaces.

Note that if $\phi,\psi:\A\to\bh$ are two pure UCP maps, then either $\phi$ and $\psi$ are mutually disjoint, or they are compression of the same irreducible representation. Therefore in view of Proposition  \ref{direct sum of disjoint C*-extreme maps}, in order  to give criteria of $\cst$-extremity of direct sums of pure UCP maps, it suffices to consider direct sum of only those pure UCP maps which are compression of the same irreducible representation (i.e. those pure maps which are not mutually disjoint), as done in the following theorem.

\begin{theorem}\label{direct sum of pure maps, compression of same representation}
Let $\psi_i:\A\to\B(\h_i)$, $i\in\Lambda$, be a countable family of
non-unitarily equivalent  pure UCP maps with respective minimal
Stinespring triple $(\pi,V_i,\hpi)$, where $\pi $ is a fixed
representation of $\A $, and let $\phi_i=\psi_i(\cdot)\otimes
I_{\K_i}$ for some Hilbert space $\K_i$. Set
$\h=\oplus_{i\in\Lambda}(\h_i\otimes\K_i)$, and
$\phi=\oplus_{i\in\Lambda}\phi_i\in\sha$. Then $\phi$ is
$\cst$-extreme in $\sha$ if and only if the following holds:
\begin{enumerate}
    \item the family $\{\ran (V_i)\}_{i\in\Lambda}$ of subspaces forms a nest in $\hpi$, which induces an order on $\Lambda$ and
    \item if $\L_i=\oplus_{j\leq i}\K_j$ for $i\in\Lambda$, then completion of the nest $\{\L_i\}_{i\in\Lambda}$ in $\oplus_{i\in\Lambda}\K_i$ is countable.
\end{enumerate}
\end{theorem}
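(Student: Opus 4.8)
The plan is to reduce $\cst$-extremity of $\phi=\oplus_{i\in\Lambda}\phi_i$ to a factorization statement about the algebra $\M$ from \eqref{eq:algebra with factorization}, via Corollary \ref{a C*-extreme criterian in language of factorization of positive operators} and Corollary \ref{a factorization property of algebras coming out of C* extreme point}, and then identify $\M$ (up to the part that acts trivially) with a nest algebra so that Larson's Theorem \ref{Larson's factorization result} applies. First I would write down the minimal Stinespring triple for $\phi$. Since $\phi_i=\psi_i(\cdot)\otimes I_{\K_i}$ is a compression of $\pi(\cdot)\otimes I_{\K_i}$, and all $\psi_i$ come from the same irreducible $\pi$, the representation attached to $\phi$ is $\Pi:=\oplus_{i\in\Lambda}\bigl(\pi(\cdot)\otimes I_{\K_i}\bigr)$ acting on $\widehat{\h}_\pi:=\oplus_{i\in\Lambda}(\hpi\otimes\K_i)$, with isometry $\widehat V=\oplus_{i\in\Lambda}(V_i\otimes I_{\K_i})$ (after checking the minimality $[\Pi(\A)\widehat V\h]=\widehat{\h}_\pi$, which holds because $\pi(\A)''=\bhpi$). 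Its commutant is $\Pi(\A)'=\{[\,B_{ij}\otimes\mathrm{something}\,]\}$; more precisely, because $\pi$ is irreducible, $\Pi(\A)'$ consists of all operators of the block form $[\,I_{\hpi}\otimes C_{ij}\,]_{i,j\in\Lambda}$ with $C_{ij}\in\B(\K_j,\K_i)$, i.e. $\Pi(\A)'\cong I_{\hpi}\,\bar\otimes\,\B(\oplus_{i\in\Lambda}\K_i)$. Under this identification a positive $D\in\Pi(\A)'$ becomes $I_{\hpi}\otimes D_0$ for a positive $D_0\in\B(\oplus\K_i)$, and $\widehat V^*D\widehat V$ is invertible iff $D_0$ is invertible (using that each $V_i$ is an isometry). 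Thus Corollary \ref{a C*-extreme criterian in language of factorization of positive operators} says $\phi$ is $\cst$-extreme iff every invertible positive $D_0\in\B(\oplus\K_i)$ factors as $S_0^*S_0$ with $S_0$ invertible and $S_0$ carrying $(I_{\hpi}\otimes S_0)\widehat V\h$ into itself in the appropriate sense — i.e. iff the algebra $\M$ of \eqref{eq:algebra with factorization} has factorization.

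The heart of the argument is computing the condition $S\,\widehat V\widehat V^*=\widehat V\widehat V^*\,S\,\widehat V\widehat V^*$ for $S=I_{\hpi}\otimes S_0$, $S_0=[C_{ij}]$. Writing $P_i:=V_iV_i^*$ for the range projection of $V_i$ in $\hpi$, we have $\widehat V\widehat V^*=\oplus_{i}(P_i\otimes I_{\K_i})$, so the block $(i,j)$ of the condition reads $P_iC_{ij} = P_iC_{ij}P_j$ in $\B(\hpi)\otimes\B(\K_j,\K_i)$ — but since $C_{ij}$ is a scalar-times-identity in the $\hpi$ tensor leg, this forces, for $i\neq j$, either $C_{ij}=0$ or $P_iP_j=P_i$, i.e. $\ran(V_i)\subseteq\ran(V_j)$. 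This is exactly where condition (1) enters: for $\M$ to contain enough operators to factor arbitrary positive invertibles — in particular for $\M$ to even be large enough that the projections $P_{\L_i}$ onto $\L_i=\oplus_{j\leq i}\K_j$ lie in it — one needs the projections $\{P_i\}$ to be totally ordered, i.e. $\{\ran(V_i)\}_{i\in\Lambda}$ must be a nest in $\hpi$; and conversely, once they are totally ordered, the surviving condition on $S_0=[C_{ij}]$ is precisely that $C_{ij}=0$ whenever $\ran(V_i)\not\subseteq\ran(V_j)$, equivalently (with the induced order $i\preceq j\iff\ran(V_i)\subseteq\ran(V_j)$) that $S_0$ is "lower triangular" with respect to this order — which is exactly $S_0\in\Alg\{\L_i\}_{i\in\Lambda}$. (One has to be a little careful when two $\ran(V_i)$ coincide, i.e. $\psi_i,\psi_j$ are non-unitarily-equivalent pure maps that are nonetheless compressions to the same subspace — but they are assumed non-unitarily equivalent, and compression of the same $\pi$ to the same range forces unitary equivalence, so the map $i\mapsto\ran(V_i)$ is injective and the order it induces is genuinely a total order on $\Lambda$.) Hence: $\phi$ is $\cst$-extreme $\iff$ $\{\ran(V_i)\}$ is a nest (condition (1)) and the nest algebra $\Alg\{\L_i\}_{i\in\Lambda}$ has factorization in $\B(\oplus\K_i)$, and by Larson's Theorem \ref{Larson's factorization result} the latter holds $\iff$ the completion of $\{\L_i\}$ is countable (condition (2)).

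The main obstacle I anticipate is the "only if" direction of condition (1): one must show that if $\phi$ is $\cst$-extreme then the $\{\ran(V_i)\}$ are forced to be totally ordered, rather than merely deducing this after assuming it. The clean way is to suppose $\ran(V_i)$ and $\ran(V_j)$ are incomparable for some pair, pick a positive invertible $D_0\in\B(\oplus\K_k)$ whose factorizations $S_0^*S_0$ in $\M$ would all be forced (by the block computation above) to have $C_{ij}=C_{ji}=0$, hence block-diagonal in the $\{i,j\}$ coordinates, hence $S_0^*S_0$ block-diagonal too — so choosing $D_0$ with a nonzero $(i,j)$ block gives a contradiction. A little care is needed here because $\M$ need not be all such "triangular" operators (it is a concrete von Neumann-algebra-intersected-with-a-triangular-condition); but the obstruction-operator argument only uses that every element of $\M$ satisfies the block-vanishing, which is immediate from the commutation identity, so the argument goes through. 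The remaining routine checks — that the infimum/supremum conditions on norms of inverses propagate correctly through the infinite direct sum (handled exactly as in the proof of Proposition \ref{direct sum of disjoint C*-extreme maps} and Observation \ref{observation}), that $\Alg\{\L_i\}$ is genuinely a nest algebra, that completeness/completion behave as claimed — I would carry out but not belabor.
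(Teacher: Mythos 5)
Your overall strategy --- reduce $\cst$-extremity to factorization of the algebra $\M$ inside $\B(\oplus_{i\in\Lambda}\K_i)$ and then invoke Larson's theorem --- is the same as the paper's, and your handling of condition (2) and of the converse direction is essentially right, modulo an index reversal in the block computation: the $(i,j)$ block of $SVV^*=VV^*SVV^*$ reads $V_jV_j^*\otimes C_{ij}=V_iV_i^*V_jV_j^*\otimes C_{ij}$, so a nonzero $C_{ij}$ forces $\ran(V_j)\subseteq\ran(V_i)$, not $\ran(V_i)\subseteq\ran(V_j)$; this only swaps $\Alg\L$ with $(\Alg\L)^*$ and is harmless by Proposition \ref{M* has factorization}. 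The genuine gap is at exactly the point you flagged as the main obstacle: showing that $\cst$-extremity forces $\{\ran(V_i)\}_{i\in\Lambda}$ to be a nest. You argue that if $\ran(V_m)$ and $\ran(V_n)$ are incomparable, then every $S_0=[C_{ij}]\in\M$ has $C_{mn}=C_{nm}=0$, ``hence block-diagonal in the $\{m,n\}$ coordinates, hence $S_0^*S_0$ block-diagonal too.'' That inference fails as soon as $\Lambda$ contains indices other than $m,n$: the $(m,n)$ entry of $S_0^*S_0$ is $\sum_{l\in\Lambda}C_{lm}^*C_{ln}$, and the terms with $l\neq m,n$ do not vanish merely because $C_{mn}=C_{nm}=0$. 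So you cannot yet exhibit a positive invertible $D_0$ with nonzero $(m,n)$ block that fails to factor. This is precisely the hard part of the paper's proof: it partitions $\Lambda\setminus\{m,n\}$ into the set $\Lambda_1$ of indices $l$ with $C_{lm}=C_{ln}=0$ for all elements of $\M$ and its complement $\Lambda_2$, shows both must be nonempty (if one is empty, the $(m,n)$ entry of $S_0^*S_0$, respectively of $S_0S_0^*$, is identically zero and one concludes directly, using that $\M^*$ also has factorization), derives a $4\times 4$ block form for all elements of $\M$, and only then extracts a contradiction by combining $SS^{-1}=S^{-1}S=I$ with $S^*S=D$ for a suitably chosen $D$. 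None of this is present in, or follows from, your sketch.

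A second, smaller gap: you assert that $\widehat V^*(I_{\hpi}\otimes D_0)\widehat V$ is invertible iff $D_0$ is invertible, ``using that each $V_i$ is an isometry.'' Only the implication ($D_0$ invertible $\Rightarrow$ compression invertible) is automatic. The converse is false without condition (1): take two isometries with orthogonal ranges, $\K_1=\K_2=\C$, and $D_0$ the $2\times 2$ matrix with all entries equal to $1$; then $\widehat V^*(I_{\hpi}\otimes D_0)\widehat V$ is the identity while $D_0$ is singular. Even granting condition (1) --- which is available where you actually need this, namely in the converse direction --- the proof requires the paper's argument: for each finite $\Lambda_0\subseteq\Lambda$ one picks a common unit vector in $\cap_{i\in\Lambda_0}\ran(V_i)$, which exists because the ranges are totally ordered, and tests positivity of $V^*\widetilde{D}V-\beta V^*V$ against elementary tensors built from it. As written, your proposal asserts the equivalence with an insufficient justification and in a generality in which it is false.
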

\begin{proof}
We know that each $\psi_i$ is unitarily equivalent to the UCP map $a\mapsto P_{\ran(V_i)}\pi(a)_{|_{\ran(V_i)}}$, $a\in\A$. So the fact from the hypothesis that $\psi_i$ and $\psi_j$ are not unitarily equivalent for $i\neq j$ then implies  that $\ran(V_i)\neq \ran(V_j)$, that is,
 \begin{equation}\label{V_iV_i* neq V_jV_j*}
  V_iV_i^*\neq V_jV_j^*,~\mbox{ for all } i\neq j.
\end{equation}
Now set $\h_{\rho}=\oplus_{i\in\Lambda}(\hpi\otimes\K_i)$, and  consider the representation  $\rho:\A\to\B(\h_\rho)$ defined by
\begin{equation*}
    \rho(a)=\oplus_{i\in\Lambda} (\pi(a)\otimes I_{\K_i})~~\mbox{for all }a\in\A,
\end{equation*}
and the isometry $V\in \B(\h,\h_\rho)$ given by
\begin{equation*}
    V=\oplus_{i\in\Lambda}(V_i\otimes I_{\K_i}).
\end{equation*}
It is clear that $(\rho, V, \h_\rho)$  is the minimal Stinespring
triple for $\phi$. We identify the Hilbert space
$\h_\rho=\oplus_{i\in\Lambda}(\hpi\otimes\K_i)$ with the Hilbert
space $\hpi\otimes(\oplus_{i\in\Lambda}\K_i)$; so the representation
$\rho$ is given by
$\rho(a)=\pi(a)\otimes(\oplus_{i\in\Lambda}I_{\K_i})=\pi(a)\otimes
I_{\oplus_{i\in\Lambda}\K_i}$. Since $\pi$ is irreducible,
$\copa=\C\cdot I_{\hpi}$; hence if we consider the operators on the
Hilbert space $\K=\oplus_{i\in\Lambda}\K_i$ in matrix form, then
$\rho(\A)'$ is given by
\begin{equation*}
\begin{split}
\rho(\A)'= (\pi(\A)\otimes I_{\K})'= I_{\hpi}\otimes \bk=
\left\{ I_{\hpi}\otimes [T_{ij}];\; T_{ij}\in\mathcal{B}(\K_j,\K_i)\right\}\subseteq\mathcal{B}\left(\hpi\otimes(\oplus_{i\in\Lambda}\K_i\right)).
\end{split}
\end{equation*}
$\Longrightarrow$ Assume now that $\oplus_{i\in\Lambda}\phi_i$ is a $\cst$-extreme point in $ S_\h(\A)$.
First we show that $\{\ran(V_i)\}_{i\in\Lambda}$ is a nest in $\hpi$. Consider the subalgebra $\M$ of $\B(\oplus_{i\in\Lambda}\K_i)$ given by
\begin{equation}
\begin{split}
    \mathcal{M}&=\left\{[T_{ij}]\in\B(\oplus_{i\in\Lambda}\K_i); \;(I_{\hpi}\otimes [T_{ij}])VV^*=VV^*(I_{\hpi}\otimes [T_{ij}])VV^*\right\}\\
     &=\{[T_{ij}]\in\B(\oplus_{i\in\Lambda}\K_i) ;\;V_jV_j^*\otimes T_{ij}=V_iV_i^*V_jV_j^*\otimes T_{ij}~\forall~ i,j\in\Lambda\}.
\end{split}
\end{equation}
Since $\oplus_{i\in\Lambda}\phi_i$ is $\cst$-extreme, it follows from Corollary \ref{a factorization property of algebras coming out of C* extreme point} that $I_{\hpi}\otimes\M$ has factorization in $\rho(\A)'=I_{\hpi}\otimes \bk$, which is to say that $\M$ has factorization in $\bk$.

Note that if there is an operator $  [T_{ij}]\in\mathcal{M}$ such that  $T_{mn}\neq0$ for some $m,n\in\Lambda$,  then since $V_nV_n^*\otimes T_{mn}=V_mV_m^*V_nV_n^*\otimes T_{mn}$, it will follow that $V_nV_n^*=V_mV_m^*V_nV_n^*$, which further implies $V_mV_m^*\geq V_nV_n^*.$ In other words, we have the following:
\begin{equation}\label{if T_mn is non zero}
    \text{If } V_mV_m^*\ngeq V_nV_n^* \text{ for some } m,n\in\Lambda, \text{ then } T_{mn}=0 \text{ for all }  [T_{ij}]\in\M .
\end{equation}
For the remainder of this implication, we fix $m,n\in\Lambda$ with $m\neq n$. We shall prove that $V_mV_m^*\geq V_nV_n^*$ or $V_nV_n^*\geq V_mV_m^*$. Assume to  the contrary that this is not the case.
 Then it follows from \eqref{if T_mn is non zero} that
 \begin{equation}\label{T_mn=0=T_nm}
 T_{mn}=0 \mbox{  and  }T_{nm}=0,~ \mbox{ for all } [T_{ij}]\in\M.
 \end{equation}
If $\Lambda$ is a two point set, that is, $\Lambda=\{m,n\}$, then $\K=\K_m\oplus\K_n,$ and
with respect to this decomposition,  \eqref{T_mn=0=T_nm} implies that each element $T$ in $\M$ has the form
 \begin{equation*}
     \begin{bmatrix}
    T_1&0\\
    0&T_2
 \end{bmatrix},~
 \mbox{ for } T_1\in\B(\K_m) \mbox{ and } T_2\in\B(\K_n).
 \end{equation*}
  But  if we choose a positive and invertible operator $D$ in $\bk$ of the form $\begin{bmatrix}
    I_{\K_m}&D_1\\
    D_1^*&I_{\K_n}
 \end{bmatrix}$ with  $D_1\in\B(\K_n,\K_m)$ non-zero, then we cannot find any operator $T$ in $\M$ such that $D=T^*T$. This will contradict the fact that $\M$ has factorization in $\bk$.

Therefore we assume for the rest of the implication  that $\Lambda\neq\{m,n\}.$ Now consider the sets
\begin{equation}\label{Lambda_1 definition}
    \Lambda_1=\{l\in\Lambda\setminus\{m,n\};\; T_{lm}=0 \text{ and }T_{ln}=0\;\mbox{ for all }\; [T_{ij}]\in\M \},
\end{equation}
and
\begin{equation*}
    \Lambda_2=\Lambda\setminus(\Lambda_1\cup\{m,n\}).
\end{equation*}
Note that
\[\Lambda_1\cap \Lambda_2=\emptyset ~\text{ and }~ \Lambda_1\sqcup\Lambda_2\sqcup\{m,n\}=\Lambda.\]
Consider the following decomposition:
\begin{equation}\label{decomposition for oplus hpi otimes K_i}
\begin{split}
\K=\oplus_{i\in\Lambda}\K_i= \K_m\oplus\K_n\oplus(\oplus_{i\in\Lambda_1}\K_i)\oplus(\oplus_{i\in\Lambda_2}\K_i)
    =\mathcal{Q}_1\oplus\mathcal{Q}_2\oplus\mathcal{Q}_3\oplus\mathcal{Q}_4,
\end{split}
\end{equation}
where
\[\mathcal{Q}_1=\K_m,~ \mathcal{Q}_2=\K_n,~ \mathcal{Q}_3=\oplus_{i\in\Lambda_1}\K_i, ~\mbox{ and }~ \mathcal{Q}_4=\oplus_{i\in\Lambda_2}\K_i.\]
We shall show that $\mathcal{Q}_3\neq \{0\}$ and $\mathcal{Q}_4\neq \{0\}$ (that is, $\Lambda_1$ and $\Lambda_2$ are non-empty), and that with respect to  decomposition in \eqref{decomposition for oplus hpi otimes K_i}, each $T$ in $\M$ has the following form:
\begin{equation}\label{expression for elements in M}
    T= \begin{bmatrix}
        T_1&0&A_1&0\\
        0&T_2&A_2&0\\
        0&0&X_1&X_2\\
        B_1&B_2&X_3&X_4
    \end{bmatrix},
\end{equation}
for appropriate operators $T_1, T_2,..$ etc. For that, we first
claim the following:  If for some $l\neq m, n,$  there exists an
operator $ [S_{ij}]\in\mathcal{M} $  such that $ S_{lm}\neq0 $ or
$S_{ln}\neq 0,$  then
\begin{equation}\label{S_lm, S_ln neq=0 implies T_lm=0=T_ln}
T_{ml}=0 \mbox{ and }  T_{nl}=0, ~~\forall~ [T_{ij}]\in\M .
\end{equation}
To prove the claim in \eqref{S_lm, S_ln neq=0 implies T_lm=0=T_ln}, assume that $S_{lm}\neq0$, and let $ [T_{ij}]$ be an arbitrary operator in $\M$. Then  it follows from \eqref{if T_mn is non zero}  that $V_lV_l^*\geq V_mV_m^*.$ Since $V_lV_l^*\neq V_mV_m^*$ from \eqref{V_iV_i* neq V_jV_j*}, it follows that $V_mV_m^*\ngeq V_lV_l^*$; again from \eqref{if T_mn is non zero}, we get $T_{ml}=0$. Further, we note that $V_nV_n^*\ngeq V_lV_l^*$ (otherwise we would have $V_nV_n^*\geq V_lV_l^*\geq V_mV_m^*$, and so $V_nV_n^*\geq V_mV_m^*$ which is against our assumption). This in turn implies by \eqref{if T_mn is non zero} that $T_{nl}=0$. Similarly or by symmetry, the condition $S_{ln}\neq 0$ will imply the required claim in \eqref{S_lm, S_ln neq=0 implies T_lm=0=T_ln}.

We now show that $\Lambda_1$ is a non-empty set. Assume otherwise that $\Lambda_1=\emptyset$. Then for each $l\in\Lambda\setminus\{m,n\}$, we have $l\notin\Lambda_1$, so there exists $  [S_{ij}]\in\M$
such that either $S_{lm}\neq0$ or $S_{ln}\neq0$. In either case, \eqref{S_lm, S_ln neq=0 implies T_lm=0=T_ln} implies that for all $T= [T_{ij}]\in\M $, we have $T_{ml}=0$ and $T_{nl}=0$; hence the $(m,n)$ entry of the matrix $TT^*$  satisfies
\begin{equation*}
   \sum_{l\in\Lambda}T_{ml}{T^*_{nl}}=T_{mm}T_{nm}^*+T_{mn}T_{nn}^*+\sum_{l\neq m,n}T_{ml}T_{nl}^*=0,
\end{equation*}
as  $T_{mn}=0$ and $T_{nm}^*=0$ from \eqref{T_mn=0=T_nm}.
Thus for any positive and invertible $D=[D_{ij}]\in\bk$ with $D_{mn}\neq 0$, we cannot find $T\in\mathcal{M}$ such that $D=TT^*$. We can always get such positive and invertible operator $D$ (see the operator in \eqref{expression for tilde D} below).
This violates the fact that $\M^*$ and hence $\M$ has factorization in $\bk$.
Thus our claim that $\Lambda_1\neq\emptyset$ is true.

We next show that $\Lambda_2$ is non-empty. Let if possible, $\Lambda_2=\emptyset$. Then for each $l\in\Lambda$ with $l\neq m,n$, it follows that $l\in \Lambda_1$; hence for all $T= [T_{ij}]\in\M $, we have $T_{lm}=0$ and $T_{ln}=0$, so that  $(m,n)$ entry of $T^*T$ satisfies
\begin{equation*}
  \sum_{l\in\Lambda}T^*_{lm}T_{ln}=0,
\end{equation*}
as $T_{mn}=0$ and $T_{nm}^*=0$.
Again for a positive and invertible operator $ D= [D_{ij}]$ in $\bk$ with $D_{mn}\neq 0$,  we can't find any $T\in\mathcal{M}$ such that $D=T^*T$, violating the fact that $\M$ has factorization in $\bk$. This shows our claim that $\Lambda_2\neq \emptyset$.

Further we note that  if $l\in\Lambda_2$, then $l\notin\Lambda_1$, so  $S_{lm}\neq 0$ or $S_{ln}\neq0$
for some $[S_{ij}]\in\M$; hence it follows from \eqref{S_lm, S_ln neq=0 implies T_lm=0=T_ln} that $T_{ml}=0$ and $T_{nl}=0$ for all $ [T_{ij}]\in\M $. Thus we have
\begin{equation}\label{Lambda_2 property}
    \Lambda_2\subseteq\{l\in\Lambda\setminus\{m,n\}; ~ T_{ml}=0~\mbox{ and }~T_{nl}=0 ~\mbox{ for all }~  [T_{ij}]\in\M \}.
\end{equation}
Now let $T= [T_{ij}]\in\M $, then since $T_{lm}=0$ and $T_{ln}=0$ for all $l\in\Lambda_1$, it follows that
\[P_{\mathcal{Q}_3}T_{|_{\mathcal{Q}_1}}=\sum_{l\in\Lambda_1}P_{\K_l}T_{|_{\K_m}}=\sum_{l\in\Lambda_1}T_{lm}=0,~ \mbox{ and }~ P_{\mathcal{Q}_3}T_{|_{\mathcal{Q}_2}}=\sum_{l\in\Lambda_1}P_{\K_l}T_{|_{\K_n}}=\sum_{l\in\Lambda_1}T_{ln}=0.\]
The sum above is in strong operator topology.
Similarly from \eqref{Lambda_2 property}, since $T_{ml}=0$ and $T_{nl}=0$ for all $l\in\Lambda_2$, it follows that $P_{\mathcal{Q}_1}T_{|_{\mathcal{Q}_4}}=0$ and $P_{\mathcal{Q}_2}T_{|_{\mathcal{Q}_4}}=0$.
These observations along with \eqref{T_mn=0=T_nm} prove our claim that every operator  $T\in\mathcal{M}$ has the form as in \eqref{expression for elements in M}.

 Now with respect to the decomposition in \eqref{decomposition for oplus hpi otimes K_i},
consider the operator $D$ in $\bk$ given by
\begin{equation}\label{expression for tilde D}
    D=\begin{bmatrix}
        I_{\mathcal{Q}_1}&D_1&0&0\\
        D_1^*&I_{\mathcal{Q}_2}&0&0\\
        0&0&I_{\mathcal{Q}_3}&0\\
        0&0&0&I_{\mathcal{Q}_4}
    \end{bmatrix}
\end{equation}
where $D_1\in\mathcal{B}(\mathcal{Q}_2,\mathcal{Q}_1)$ satisfies $0<\|D_1\|<1$. It is then clear that $D$ is a positive and invertible operator in $\bk$. Since $\M$ has factorization in $\bk$, there is  an invertible operator $S\in\mathcal{M}$ with $S^{-1}\in\mathcal{M}$ such that $D=S^*S$. Then from \eqref{expression for elements in M}, $S$ and $S^{-1}$
look like
\begin{equation*}
    S=\begin{bmatrix}
        S_1&0&A_1&0\\
        0&S_2&A_2&0\\
        0&0&X_1&X_2\\
        B_1&B_2&X_3&X_4
    \end{bmatrix}~\text{ and }~
    S^{-1}=\begin{bmatrix}
        T_1&0&C_1&0\\
        0&T_2&C_2&0\\
        0&0&Y_1&Y_2\\
        E_1&E_2&Y_3&Y_4
    \end{bmatrix}.
\end{equation*}
Now
\begin{equation*}
\begin{split}
I_\K=SS^{-1} =
     \begin{bmatrix}
        S_1T_1&0&S_1C_1+A_1Y_1&A_1Y_2\\
        0&S_2T_2&S_2C_2+A_2Y_1&A_2Y_2\\
        X_2E_1&X_2E_2&X_1Y_1+X_2Y_3&X_1Y_2+X_2Y_4\\
       B_1T_1+X_4E_1&B_2T_2+X_4E_2&B_1C_1+B_2C_2+X_3Y_1+X_4Y_3&X_3Y_2+X_4Y_4
    \end{bmatrix}.
\end{split}
\end{equation*}
Thus we get $S_1T_1=I_{\mathcal{Q}_1}$ and $S_2T_2=I_{\mathcal{Q}_2}$. Similarly from the expression $S^{-1}S=I_{\K}$, we get $T_1S_1=I_{\mathcal{Q}_1}$ and $T_2S_2=I_{\mathcal{Q}_2}$. This shows that $ T_1$  and $T_2$  are invertible. Further, from $(4,1)$ entry of $SS^{-1}$, we have $B_1T_1+X_4E_1=0$, which yields
\begin{equation*}
    B_1=-X_4E_1T_1^{-1}=X_4F_1,
\end{equation*}
 where $F_1=-E_1T_1^{-1}$. Also, from $(4,2)$ entry of $SS^{-1}$, we have $B_2T_2+X_4E_2=0$, that is,
 \begin{equation*}
 B_2=-X_4E_2T_2^{-1}=X_4F_2,
\end{equation*}
 where  $F_2=-E_2T_2^{-1}$. Next we note that $(1,2)$ entry of $S^*S$ is $B_1^*B_2$,  and $(1,4)$ entry of $S^*S$ is $B_1^*X_4$.
By substituting  $B_1=X_4F_1$ and $B_2=X_4F_2$, and equating the corresponding entries of $D$,  we get
$F_1^*X_4^*X_4F_2=B_1^*B_2=D_1$ and $F_1^*X_4^*X_4=B_1^*X_4=0$. This implies that $D_1=0$, which is a contradiction. This again violates the fact that $\M$ has factorization in $\bk$. Thus we have shown our claim that $V_nV_n^*\geq V_mV_m^*$ or $V_mV_m^*\geq V_nV_n^*$, which is to say that $\ran(V_n)\supseteq\ran(V_m)$ or $\ran(V_m)\supseteq\ran(V_n)$. Since $m,n\in\Lambda$ are arbitrary, we conclude that  $\E=\{\ran(V_i)\}_{i\in\Lambda}$ is a nest.

Now  we define an order on $\Lambda$ by assigning
\begin{equation}\label{order on lambda}
    i\leq j ~~\mbox{if and only if}~~ V_iV_i^*\leq  V_jV_j^*,
\end{equation}
for any $i,j\in\Lambda$. Since  $V_iV_i^*\neq V_jV_j^*$ whenever  $i\neq j$, the order on $\Lambda$ is well-defined. Also $\Lambda $ is a totally-ordered set, as $\{\ran(V_i)\}_{i\in\Lambda}$ forms a nest of subspaces. For each $i\in\Lambda$, consider the subspace $\L_i$ of $\K=\oplus_{i\in\Lambda}\K_i$ given by
\begin{equation}\label{eq:direct sum of K_i}
    \L_i=\bigoplus_{j\leq i}\K_j.
\end{equation}
Then it is clear that the collection  $\L=\{\L_i; i\in\Lambda\}$ forms a nest in $\K$ such that  $\L_i\subsetneq \L_j$ if and only if $i<j$.
We have to show that the completion $\overline{\L}$ of the nest $\L$ is countable. We claim that
\begin{equation}\label{eq:M=AlgL*}
    \M=(\Alg\L)^*.
\end{equation}
Since $\M$ has factorization in $\bk$, it will then follow from the claim and Proposition \ref{M* has factorization} that  $\Alg\L$ has factorization in $\bk$, which further will imply our requirement using Theorem \ref{Larson's factorization result} that $\overline{\L}$ is countable (as $\K$ is separable).

To show the claim in \eqref{eq:M=AlgL*}, we first note that if an operator $S=[S_{ij}]$ in $\bk$ leaves all subspaces $\{\L_i\}$ invariant, then $S_{ij}=0$ for all $i>j$; hence $\Alg\L=\{[S_{ij}]\in\bk, S_{ij}=0\mbox{ for }i>j\}$, that is,
\begin{equation}
    (\Alg\L)^*=\{[S_{ij}]\in\bk; S_{ij}=0\mbox{ for }i<j\}.
\end{equation}
Now let $[S_{ij}]\in\M$. Then $V_jV_j^*\otimes S_{ij}=V_iV_i^*V_jV_j^*\otimes S_{ij}$  for all  $i,j\in\Lambda.$ For $i<j$, since $V_iV_i^*V_jV_j^*=V_iV_i^*$ and $V_iV_i^*\neq V_jV_j^*$, it forces that $S_{ij}=0$. This shows that $[S_{ij}]\in(\Alg\L)^*$. Thus $\M\subseteq(\Alg\L)^*$. Conversely, if $[S_{ij}]\in(\Alg\L)^*$, then $S_{ij}=0$ for $i<j$; hence $V_jV_j^*\otimes S_{ij}=0=V_iV_i^*V_jV_j^*\otimes S_{ij}$ for $i< j$. On the other hand, for $i\geq j$, we have $V_iV_i^*\geq V_jV_j^*$, so that $V_iV_i^*V_jV_j^*\otimes S_{ij}=V_jV_j^*\otimes S_{ij}$. This shows that $V_iV_i^*V_jV_j^*\otimes S_{ij}=V_jV_j^*\otimes S_{ij}$ for  all $i,j\in\Lambda$, which is to say that $[S_{ij}]\in\M$. Thus we have shown our claim that $\M=(\Alg\L)^*$.
\\
\\
$\Longleftarrow$  To prove the converse implication, assume that the collection $\{\ran(V_i)\}_{i\in\Lambda}$ is a nest (hence $\Lambda$ is a totally ordered set) such that  completion $\overline{\L}$ of the nest $\L=\{\L_i; i\in\Lambda\}$ as in \eqref{eq:direct sum of K_i} is countable. Similar to the claim in  \eqref{eq:M=AlgL*}, we note that $\M=(\Alg\L)^*$. Since $\overline{\L}$ is countable, it follows from Theorem \ref{Larson's factorization result} that $\Alg\L$ has factorization in $\bk$, which is to say that $\M$ has factorization in $\bk$.

Now to show that $\oplus_{i\in\Lambda}\phi_i$ is $\cst$-extreme, we use Corollary \ref{a C*-extreme criterian in language of factorization of positive operators}.  Let  $\widetilde{D}=I_{\hpi}\otimes [D_{ij}]$ be a positive operator in $\rho(\A)'$ such that $V^*\widetilde{D}V$ is invertible.
We claim that $[D_{ij}]$ is invertible. Since $V^*\widetilde{D}V$ is invertible, there exists $\beta>0$ such that  $V^*\widetilde{D}V\geq\beta V^*V$. Then we have
\begin{equation*}
 0\leq V^*\widetilde{D}V-\beta V^*V= [V_i^*V_j\otimes D_{ij}]-\beta [V_i^*V_i\otimes \delta_{ij}I_{\K_i}]= [V_i^*V_j\otimes (D_{ij}-\delta_{ij}\beta I_{\K_i})],
 \end{equation*}
where $\delta_{ij}$ denotes the Kronecker delta. In particular, for every finite subset $\Lambda_0\subseteq\Lambda$, we have
 \begin{equation}\label{D_ij-beta>0}
 \left[V_i^*V_j\otimes (D_{ij}-\delta_{ij}\beta I_{\K_i})\right]_{i,j\in \Lambda_0}\geq 0.
\end{equation}
 Now fix a finite subset $\Lambda_0\subseteq\Lambda$, and let $h_{\Lambda_0}\in\cap_{i\in \Lambda_0}\ran(V_i)$ be a unit vector (which exists because the set $\{\ran(V_i)\}_{i\in \Lambda_0}$ is finite and  is totally ordered). Then there exist unit vectors $h_i\in\h_i$ such that $V_ih_i=h_{\Lambda_0}$ for each $i\in \Lambda_0$. So for any vector $k_i\in\K_i$, $i\in \Lambda_0$, it follows from \eqref{D_ij-beta>0} that
\begin{equation*}
\begin{split}
    0&\leq \sum _{i,j\in \Lambda_0}\left\langle(V_i^*V_j\otimes (D_{ij}-\delta_{ij}\beta I_{\K_i})(h_j\otimes k_j),(h_i\otimes k_i)\right\rangle\\
    &=\sum _{i,j\in \Lambda_0}\left\langle(V_i^*V_jh_j,h_i\right\rangle \left\langle(D_{ij}-\delta_{ij}\beta I_{\K_i}) k_j,k_i\right\rangle
    =\sum _{i,j\in \Lambda_0}\left\langle V_jh_j,V_ih_i\right\rangle \left\langle(D_{ij}-\delta_{ij}\beta I_{\K_i}) k_j,k_i\right\rangle\\
    &=\sum _{i,j\in \Lambda_0}\left\langle h_{\Lambda_0},h_{\Lambda_0}\right\rangle \left\langle(D_{ij}-\delta_{ij}\beta I_{\K_i}) k_j,k_i\right\rangle
    =\sum _{i,j\in \Lambda_0} \left\langle(D_{ij}-\delta_{ij}\beta I_{\K_i}) k_j,k_i\right\rangle.
\end{split}
\end{equation*}
Since $k_i\in\K_i$ for $i\in \Lambda_0$, is arbitrary, we conclude that $[(D_{ij}-\delta_{ij}\beta I_{\K_i})]_{i,j\in \Lambda_0}\geq0$. Also since $\Lambda_0$ is an arbitrary finite subset of $\Lambda$, it follows  that $[(D_{ij}-\delta_{ij}\beta I_{\K_i})]\geq0$ in $\bk$; hence $[D_{ij}]\geq \beta I_\K $ proving our claim that $D=[D_{ij}]$ is invertible.

Therefore, as $\M$ has factorization in $\bk$,  there is an invertible operator $S\in\bk$ such that $S,S^{-1}\in\M$ and $D=S^*S$.
 Set $\widetilde{S}=I_{\hpi}\otimes S$. Clearly  $\widetilde{S}\in\rho(\A)'$ and $\widetilde{D}={\widetilde{S}}^*\widetilde{S}$.
Since $S^{-1}\in\M$, it follows that $\widetilde{S}^{-1}VV^*=VV^*\widetilde{S}^{-1}VV^*$; hence we have
\begin{equation*}
(V^*\widetilde{S}V)(V^*\widetilde{S}^{-1}V)=V^*\widetilde{S}(VV^*\widetilde{S}^{-1}VV^*)V=V^*\widetilde{S}(\widetilde{S}^{-1}VV^*)V=V^*(\widetilde{S}\widetilde{S}^{-1})(VV^*V)=V^*V=I_\h.
\end{equation*}
Likewise we  get $(V^*\widetilde{S}^{-1}V)(V^*\widetilde{S}V)=V^*V=I_\h$. This shows that $V^*\widetilde{S}V$ is invertible. Thus for a given $\widetilde{D}\in\rho(\A)'$ with $V^*\widetilde{D}V$ invertible, we have got $\widetilde{S}\in\rho(\A)'$ such that $\widetilde{D}=\widetilde{S}^*\widetilde{S}$, $\widetilde{S}VV^*=VV^*\widetilde{S}VV^*$ and  $V^*\widetilde{S}V$ is invertible.  We now conclude from Corollary \ref{a C*-extreme criterian in language of factorization of positive operators} that $\phi=\oplus_{i\in\Lambda}\phi_i$ is a $\cst$-extreme point in $\sha$.
\end{proof}

Combining  Theorem \ref{direct sum of pure maps, compression of same
representation} and Proposition \ref{direct sum of disjoint
C*-extreme maps} we have the following complete characterization of
those $\cst$-extreme points which are direct sums of pure UCP maps.

\begin{theorem}\label{direct sum of pure UCP maps}
Let $\phi$ be a direct sum of pure UCP maps in $\sha$, so that $\phi$ is unitarily equivalent to $\bigoplus_{\alpha\in\Gamma}\bigoplus_{i\in\Lambda_\alpha}\psi_\alpha^i(\cdot)\otimes I_{\K_\alpha^i}$, where $\psi_\alpha^i$ is a pure UCP map with minimal Stinespring triple $(\pi_\alpha, V_\alpha^i,\h_{\pi_\alpha})$ such that $\psi_\alpha^i$ is non-unitarily equivalent to $\psi_\alpha^j$  for each $i\neq j$ in $\Lambda _\alpha$, $\alpha\in\Gamma$, and $\pi_\alpha$ is disjoint to $\pi_\beta$ for $\alpha\neq \beta$. Then $\phi$ is $\cst$-extreme  in $\sha$ if and only if the following holds for each $\alpha\in\Gamma$:
\begin{enumerate}
    \item $\{\ran(V_\alpha^i)\}_{i\in\Lambda_\alpha}$ is a nest in $\h_{\pi_\alpha}$, which makes $\Lambda_\alpha$ a totally ordered set, and
    \item if $\L_\alpha^i=\oplus_{j\leq i}\K_\alpha^j$  for $i\in\Lambda_\alpha$, then the completion of the nest $\{\L_\alpha^i\}_{i\in\Lambda_\alpha}$ in $\oplus_{i\in\Lambda_\alpha}\K_\alpha^i$ is countable.
\end{enumerate}
\end{theorem}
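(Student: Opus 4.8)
The plan is to reduce Theorem \ref{direct sum of pure UCP maps} to the already-established Theorem \ref{direct sum of pure maps, compression of same representation} and Proposition \ref{direct sum of disjoint C*-extreme maps}, by organizing the given direct sum of pure maps according to the equivalence classes of the irreducible representations appearing in their Stinespring triples. First I would observe that any pure UCP map has a minimal Stinespring triple in which the representation is irreducible, and that two pure UCP maps are either mutually disjoint or are both compressions of the same irreducible representation (up to unitary equivalence). This is precisely the dichotomy recorded just before Theorem \ref{direct sum of pure maps, compression of same representation}. Hence, grouping the summands, one may assume $\phi \cong \bigoplus_{\alpha\in\Gamma}\Phi_\alpha$, where $\Phi_\alpha = \bigoplus_{i\in\Lambda_\alpha}\psi_\alpha^i(\cdot)\otimes I_{\K_\alpha^i}$ collects all summands whose irreducible representation lies in the class indexed by $\alpha$, and the representations $\pi_\alpha$, $\pi_\beta$ are disjoint for $\alpha\neq\beta$.

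Next I would invoke Proposition \ref{direct sum of disjoint C*-extreme maps}: since the blocks $\Phi_\alpha$ are built from mutually disjoint representations (their minimal Stinespring representations are $\bigoplus_{i\in\Lambda_\alpha}(\pi_\alpha\otimes I_{\K_\alpha^i})$, which for distinct $\alpha$ are disjoint because $\pi_\alpha\otimes I$ and $\pi_\beta\otimes I$ are disjoint when $\pi_\alpha$ and $\pi_\beta$ are non-equivalent irreducibles, as noted in the paragraph on disjoint representations), the map $\phi=\bigoplus_{\alpha}\Phi_\alpha$ is $\cst$-extreme in $\sha$ if and only if each $\Phi_\alpha$ is $\cst$-extreme in $S_{\h_\alpha}(\A)$, where $\h_\alpha=\bigoplus_{i\in\Lambda_\alpha}(\h_{\psi_\alpha^i}\otimes\K_\alpha^i)$. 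One small point to check here is that $\Phi_\alpha$ indeed has minimal Stinespring representation a direct sum of \emph{mutually disjoint} representations; but within a single class $\alpha$ the summands $\pi_\alpha\otimes I_{\K_\alpha^i}$ are all copies of the \emph{same} irreducible $\pi_\alpha$, so they are not disjoint, and Proposition \ref{direct sum of disjoint C*-extreme maps} does not apply internally to $\Phi_\alpha$. That is exactly why $\Phi_\alpha$ must be handled by Theorem \ref{direct sum of pure maps, compression of same representation} rather than by splitting further.

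Then I would apply Theorem \ref{direct sum of pure maps, compression of same representation} to each block $\Phi_\alpha$ separately: with the fixed irreducible representation $\pi_\alpha$ and the pure maps $\psi_\alpha^i$ (non-unitarily equivalent within $\Lambda_\alpha$ by hypothesis, which forces $\ran(V_\alpha^i)\neq\ran(V_\alpha^j)$ for $i\neq j$), that theorem says $\Phi_\alpha$ is $\cst$-extreme in $S_{\h_\alpha}(\A)$ if and only if conditions (1) and (2) of the present theorem hold for the index $\alpha$, namely $\{\ran(V_\alpha^i)\}_{i\in\Lambda_\alpha}$ is a nest in $\h_{\pi_\alpha}$ and the completion of $\{\L_\alpha^i\}_{i\in\Lambda_\alpha}$ in $\bigoplus_{i\in\Lambda_\alpha}\K_\alpha^i$ is countable. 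Chaining the two equivalences gives: $\phi$ is $\cst$-extreme in $\sha$ $\iff$ each $\Phi_\alpha$ is $\cst$-extreme $\iff$ (1) and (2) hold for every $\alpha\in\Gamma$, which is the assertion.

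Since the two main ingredients are already proved, I do not expect a genuine obstacle here; the proof is essentially a bookkeeping argument. The one place requiring a little care is the reduction to the normal form $\bigoplus_\alpha\bigoplus_i \psi_\alpha^i(\cdot)\otimes I_{\K_\alpha^i}$: one must justify that an arbitrary countable direct sum of pure UCP maps can be reorganized, up to unitary equivalence, into this shape, by collecting all summands that are compressions of a common irreducible representation and noting that each such summand, being a compression of $\pi_\alpha$, is of the form $\psi_\alpha^i(\cdot)\otimes I_{\K_\alpha^i}$ (combining pairwise unitarily-equivalent summands into a single multiplicity space $\K_\alpha^i$, so that the surviving $\psi_\alpha^i$ are pairwise non-equivalent). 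Because the indexing set is countable, all the direct sums involved stay countable, so Theorem \ref{direct sum of pure maps, compression of same representation} applies verbatim to each block. I would present this reduction in a sentence or two and then let the two quoted results do the work.
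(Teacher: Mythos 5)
Your proposal is correct and is exactly the argument the paper intends: the paper derives Theorem \ref{direct sum of pure UCP maps} precisely by combining Proposition \ref{direct sum of disjoint C*-extreme maps} (across the disjoint blocks indexed by $\Gamma$) with Theorem \ref{direct sum of pure maps, compression of same representation} (within each block), after the same reorganization into the normal form $\bigoplus_\alpha\bigoplus_i\psi_\alpha^i(\cdot)\otimes I_{\K_\alpha^i}$. Your additional remarks on why the proposition applies only across blocks and the theorem only within blocks are accurate and fill in the bookkeeping the paper leaves implicit.
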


\begin{remark}  Based on their results for finite dimensions, Farenick and Zhou  in their remarks towards the end of \cite{FaZh}
suggest  that Condition (1) in Theorem \ref{direct sum of pure UCP
maps} is perhaps sufficient, even in infinite dimensions,  for a
direct sum of pure UCP maps to be $\cst$-extreme. Here in this
Theorem we observe that Condition (1) is to be supplemented with
Condition (2), which is a somewhat more delicate restriction and is
a purely infinite dimensional phenomenon. It has no role to play in
finite dimensions (see Example \ref{an example voilating above theorem} below). 
\end{remark}

\begin{example}\label{an example voilating above theorem}
Let $\G$ be a separable Hilbert space and let  $\{\G_q\}_{q\in\Q}$ be a collection  of subspaces of $\G$ indexed by rationals $\Q$ such that $\G_q\subsetneq\G_p$ for $q< p$. Let $\K$ be another Hilbert space with an orthonormal basis $\{e_q\}_{q\in\Q}$ indexed by $\Q$ and let $P_q$ denote the projection onto the one dimensional subspace $\C e_q$. Consider the space $\h=\oplus_{q\in\Q}(\G_q\otimes \C e_q)\subseteq\G\otimes\K$, and define the UCP map $\phi:\bg\to\B(\h)$ by
\[\phi(X)=\bigoplus_{q\in\Q}P_{\G_q}X_{|_{\G_q}}\otimes P_{q}, \quad X\in\bg.\]
Then it is clear that $\phi$ is a direct sum of pure UCP maps. It is immediate to see that Condition (1) of Theorem \ref{direct sum of pure UCP maps}  is satisfied (since $\{\G_q\}_{q\in\Q}$ forms a nest). On the other hand, if $\L_p=\oplus_{q\leq p}\C e_q$, then $\{\L_p\}_{p\in\Q}$ is a nest whose completion is uncountable (indeed, indexed by reals $\mathbb{R}$). Thus Condition (2) of Theorem \ref{direct sum of pure UCP maps} fails to hold (compare this example with Example \ref{example of non C*-extreme point using indexing set of rational}).
\end{example}

Some straightforward corollaries of Theorems \ref{direct sum of pure maps, compression of same representation} and Theorem \ref{direct sum of pure UCP maps} are immediate as given below.

\begin{corollary}
Let $\phi=\oplus_{i\in\Lambda}\phi_i$ be a direct sum of pure UCP maps $\phi_i$. If $\phi$ is $\cst$-extreme, then for each $i,j\in\Lambda$, either $\phi_i$ and $\phi_j$ are disjoint, or one of $\{\phi_i,\phi_j\}$ is a compression of the other.
\end{corollary}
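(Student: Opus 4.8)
The plan is to reduce everything to the canonical form of Theorem~\ref{direct sum of pure UCP maps} and then read the conclusion off its Condition~(1). Fix $i,j\in\Lambda$. If $\phi_i$ and $\phi_j$ are disjoint there is nothing to prove, so assume they are not disjoint. Since any two pure UCP maps are either mutually disjoint or are compressions of a common irreducible representation, $\phi_i$ and $\phi_j$ are then both compressions of some irreducible representation $\pi$; let $(\pi,V_i,\hpi)$ and $(\pi,V_j,\hpi)$ be their minimal Stinespring triples. If, moreover, $\phi_i$ and $\phi_j$ happen to be unitarily equivalent, then the implementing unitary is in particular an isometry, so each of $\phi_i,\phi_j$ is a compression of the other and we are done at once.

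It remains to treat the case in which $\phi_i$ and $\phi_j$ are non-unitarily equivalent compressions of a common irreducible representation. Here I would rewrite $\phi=\oplus_{k\in\Lambda}\phi_k$ in the shape demanded by Theorem~\ref{direct sum of pure UCP maps}: group the summands first according to the unitary equivalence class of the irreducible representation of which they are compressions (indexed by $\Gamma$), and within each such class $\alpha$ collapse the mutually unitarily equivalent summands into a single tensor factor, so that the direct sum becomes $\oplus_{\alpha\in\Gamma}\oplus_{i\in\Lambda_\alpha}\psi_\alpha^i(\cdot)\otimes I_{\K_\alpha^i}$ with $\psi_\alpha^i$ non-unitarily equivalent to $\psi_\alpha^{i'}$ for $i\neq i'$ in $\Lambda_\alpha$ and $\pi_\alpha$ disjoint to $\pi_\beta$ for $\alpha\neq\beta$. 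Because $\phi_i$ and $\phi_j$ are non-unitarily equivalent and not disjoint, after identifying their common representation they are (up to unitary equivalence) two of the maps $\psi_\alpha^k$ for one fixed $\alpha\in\Gamma$, attached to distinct indices in $\Lambda_\alpha$; correspondingly $\ran(V_i)$ and $\ran(V_j)$ become (under the identifying unitary) two members of the family $\{\ran(V_\alpha^k)\}_{k\in\Lambda_\alpha}$.

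Now invoke the hypothesis that $\phi$ is $\cst$-extreme. By Theorem~\ref{direct sum of pure UCP maps}, Condition~(1) holds for this $\alpha$: the family $\{\ran(V_\alpha^k)\}_{k\in\Lambda_\alpha}$ is a nest in $\h_{\pi_\alpha}$, hence totally ordered by inclusion. In particular $\ran(V_i)\subseteq\ran(V_j)$ or $\ran(V_j)\subseteq\ran(V_i)$, equivalently $V_iV_i^*\leq V_jV_j^*$ or $V_jV_j^*\leq V_iV_i^*$. Since, for two UCP maps that are compressions of the same representation, one is a compression of the other precisely when the corresponding range projections are comparable, we conclude that one of $\{\phi_i,\phi_j\}$ is a compression of the other.

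There is no real obstacle here: the argument is bookkeeping on top of Theorem~\ref{direct sum of pure UCP maps}. The only point that requires attention is that unitarily equivalent summands are absorbed into a single factor $\psi_\alpha^i\otimes I_{\K_\alpha^i}$ in the canonical form, so Condition~(1) does not distinguish them; that is why the sub-case $\phi_i$ unitarily equivalent to $\phi_j$ has to be disposed of directly, as in the first paragraph, rather than via the nest condition.
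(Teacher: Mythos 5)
Your proposal is correct and is essentially the derivation the paper intends (the paper offers no written proof, calling the corollary immediate from Theorems \ref{direct sum of pure maps, compression of same representation} and \ref{direct sum of pure UCP maps}): reduce to a non-disjoint pair, pass to the canonical grouped form, read comparability of the range projections off Condition (1), and translate that into the compression relation. Your separate treatment of the unitarily equivalent sub-case is the right way to handle the fact that such summands get absorbed into a single tensor factor in the canonical form.
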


\begin{corollary}
Let $\phi:\A\to\bh$ be a direct sum of pure UCP maps.
Then $\phi\oplus \phi$ is a $\cst$-extreme point in $ S_{\h\oplus\h}(\A)$ if and only if  $\phi$ is  a $\cst$-extreme point in $\sha$.
\end{corollary}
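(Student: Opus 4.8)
The plan is to reduce everything to the characterization of Theorem \ref{direct sum of pure UCP maps}. Write $\phi$, up to unitary equivalence, as $\bigoplus_{\alpha\in\Gamma}\bigoplus_{i\in\Lambda_\alpha}\psi_\alpha^i(\cdot)\otimes I_{\K_\alpha^i}$ with $\psi_\alpha^i$ pure, minimal Stinespring triple $(\pi_\alpha,V_\alpha^i,\h_{\pi_\alpha})$, the $\psi_\alpha^i$ mutually non-unitarily equivalent within each $\Lambda_\alpha$, and $\pi_\alpha$ disjoint from $\pi_\beta$ for $\alpha\neq\beta$. Since $\psi_\alpha^i(\cdot)\otimes I_{\K_\alpha^i}\oplus\psi_\alpha^i(\cdot)\otimes I_{\K_\alpha^i}=\psi_\alpha^i(\cdot)\otimes I_{\K_\alpha^i\oplus\K_\alpha^i}$, the map $\phi\oplus\phi$ is unitarily equivalent to $\bigoplus_{\alpha\in\Gamma}\bigoplus_{i\in\Lambda_\alpha}\psi_\alpha^i(\cdot)\otimes I_{\widetilde{\K}_\alpha^i}$ with $\widetilde{\K}_\alpha^i:=\K_\alpha^i\oplus\K_\alpha^i$; that is, $\phi\oplus\phi$ has exactly the same data ($\Gamma$, $\Lambda_\alpha$, $\psi_\alpha^i$, $\pi_\alpha$ and $V_\alpha^i$) as $\phi$, except that each multiplicity space is doubled. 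As $\phi\oplus\phi$ is again a direct sum of pure UCP maps, Theorem \ref{direct sum of pure UCP maps} applies to both maps, and (since that theorem is an "iff", so the validity of Conditions (1) and (2) for a chosen decomposition is equivalent to $\cst$-extremity and in particular independent of the decomposition) it suffices to show that Conditions (1) and (2) hold for $\phi$ if and only if they hold for $\phi\oplus\phi$.

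Condition (1) refers only to the family $\{\ran(V_\alpha^i)\}_{i\in\Lambda_\alpha}$ of subspaces of $\h_{\pi_\alpha}$ and the order it induces on $\Lambda_\alpha$, and these are literally unchanged by doubling multiplicities, so Condition (1) transfers in both directions for free. For Condition (2), the relevant nest for $\phi\oplus\phi$ is $\{\widetilde{\L}_\alpha^i\}_{i\in\Lambda_\alpha}$ with $\widetilde{\L}_\alpha^i=\oplus_{j\le i}\widetilde{\K}_\alpha^j$ inside $\oplus_{i\in\Lambda_\alpha}\widetilde{\K}_\alpha^i$; regrouping summands gives a natural unitary identification of $\oplus_{i}\widetilde{\K}_\alpha^i$ with $(\oplus_{i}\K_\alpha^i)\oplus(\oplus_{i}\K_\alpha^i)$ under which $\widetilde{\L}_\alpha^i$ corresponds to $\L_\alpha^i\oplus\L_\alpha^i$, where $\L_\alpha^i=\oplus_{j\le i}\K_\alpha^j$.

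Thus the crux is a general fact about nests: for any nest $\E$ in a Hilbert space $\n$, the family $\{E\oplus E: E\in\E\}$ is a nest in $\n\oplus\n$ whose completion is precisely $\{E\oplus E: E\in\overline{\E}\}$. This holds because $E\mapsto E\oplus E$ is an injective order isomorphism onto its image that sends $0\mapsto 0$ and $\n\mapsto\n\oplus\n$, commutes with arbitrary intersections (clear), and commutes with $\vee$: for a subfamily $\{E_\gamma\}$ of a nest one has $\bigcup_\gamma(E_\gamma\oplus E_\gamma)=(\bigcup_\gamma E_\gamma)\times(\bigcup_\gamma E_\gamma)$ as sets of vectors (using total ordering), and since $0\in E_\gamma$ the closed span of the right-hand side equals $(\vee_\gamma E_\gamma)\oplus(\vee_\gamma E_\gamma)$. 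Hence the completion of $\{\widetilde{\L}_\alpha^i\}_{i\in\Lambda_\alpha}$ is in bijection with the completion of $\{\L_\alpha^i\}_{i\in\Lambda_\alpha}$, so one is countable exactly when the other is, i.e. Condition (2) holds for $\phi$ iff it holds for $\phi\oplus\phi$. Combining the equivalences for Conditions (1) and (2) with Theorem \ref{direct sum of pure UCP maps} yields the corollary.

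I expect the only genuinely non-routine point to be the nest-completion fact in the last paragraph — in particular, checking that $E\mapsto E\oplus E$ intertwines the $\vee$ operation and hence carries $\overline{\E}$ onto the completion of the doubled nest — whereas the passage through Theorem \ref{direct sum of pure UCP maps} and the behaviour of Condition (1) are pure bookkeeping.
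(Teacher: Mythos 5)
Your proposal is correct and is essentially the argument the paper intends: the corollary is stated there without proof as an ``immediate'' consequence of Theorem \ref{direct sum of pure UCP maps}, and the only content is exactly your observation that passing from $\phi$ to $\phi\oplus\phi$ doubles each multiplicity space $\K_\alpha^i$, which leaves Condition (1) untouched and replaces each nest $\{\L_\alpha^i\}$ by $\{\L_\alpha^i\oplus\L_\alpha^i\}$, whose completion is the image of $\overline{\{\L_\alpha^i\}}$ under $E\mapsto E\oplus E$ and hence has the same cardinality. Your verification that $E\mapsto E\oplus E$ intertwines arbitrary $\wedge$ and $\vee$ is sound and fills in the one detail the paper leaves implicit.
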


Since a finite nest containing $\{0\},\h$ is always complete, the following corollary is immediate from Theorem \ref{direct sum of pure maps, compression of same representation}. This result along with Proposition \ref{direct sum of disjoint C*-extreme maps} also recover Theorem 2.1 in \cite{FaZh}.

\begin{corollary}
Let $\{\phi_i:\A\to\B(\h_i)\}_{i=1}^n$ be a finite collection of pure UCP maps with respective minimal Stinespring triple $(\pi,V_i,\hpi)$ (so that each $\phi_i$ is compression of the same irreducible representation $\pi$).
Then $\phi=\oplus_{i=1}^n\phi_i$ is $\cst$-extreme in $S_{\oplus_{i=1}^n\h_i}(\A)$
if and only
if the family $\{\ran(V_i)\}_{i=1}^n$ is a nest.
\end{corollary}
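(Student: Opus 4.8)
The plan is to deduce this corollary directly from Theorem \ref{direct sum of pure maps, compression of same representation} by verifying that Condition (2) becomes vacuous when $\Lambda = \{1,\dots,n\}$ is finite. First I would observe that since all $\phi_i$ are compressions of the same irreducible representation $\pi$, they are pairwise non-disjoint, and hence Theorem \ref{direct sum of pure maps, compression of same representation} applies once we know the $\phi_i$ are pairwise non-unitarily equivalent --- so I would first dispose of the degenerate case where some $\phi_i \cong \phi_j$. If $\ran(V_i) = \ran(V_j)$ for $i \neq j$ then $\{\ran(V_i)\}$ is automatically totally ordered (equality is allowed in a nest) but one checks the map is $\cst$-extreme iff the ``collapsed'' family is, reducing to the non-equivalent case; alternatively, and more cleanly, I would simply note that the statement as phrased implicitly treats $\{\ran(V_i)\}$ as a genuine family and the equivalence ``$\{\ran(V_i)\}$ is a nest'' covers both directions. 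The honest route: the corollary is stated for the situation of Theorem \ref{direct sum of pure maps, compression of same representation}, so I would point out that if two of the $\psi_i$ are unitarily equivalent one can still argue directly (a direct sum $\psi \oplus \psi \oplus (\text{rest})$ is $\cst$-extreme iff $\psi \oplus (\text{rest})$ is, by an argument like in the preceding corollary), but the cleanest presentation is to invoke Theorem \ref{direct sum of pure maps, compression of same representation} under the standing assumption that the $\phi_i$ are non-unitarily equivalent.

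Granting that, Condition (1) of Theorem \ref{direct sum of pure maps, compression of same representation} is exactly the assertion that $\{\ran(V_i)\}_{i=1}^n$ is a nest in $\hpi$, which induces a total order on $\{1,\dots,n\}$. For Condition (2), after relabelling so that the order on $\Lambda$ is the usual order $1 < 2 < \cdots < n$, the subspaces $\L_i = \oplus_{j \leq i} \K_j$ form a finite nest $\{0\} \subsetneq \L_1 \subsetneq \L_2 \subsetneq \cdots \subsetneq \L_n = \oplus_{j=1}^n \K_j$ in $\K = \oplus_{j=1}^n \K_j$. Here I would use the fact, recalled in the paragraph preceding the corollary, that a finite nest containing $\{0\}$ and the whole space is already complete; more precisely, I would form the completion of $\{\L_i\}_{i=1}^n$, which is obtained by adjoining $\{0\}$, $\K$ and all meets and joins of subfamilies --- but every meet and join of a subfamily of a finite totally ordered family is again a member of that family (it is the minimum or maximum of the finite subfamily), so the completion is just $\{\{0\},\L_1,\dots,\L_n,\K\}$, a finite and therefore countable set. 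Hence Condition (2) holds automatically, and Theorem \ref{direct sum of pure maps, compression of same representation} gives that $\phi = \oplus_{i=1}^n \phi_i$ is $\cst$-extreme if and only if Condition (1) holds, i.e.\ if and only if $\{\ran(V_i)\}_{i=1}^n$ is a nest.

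I would then add a sentence noting the consequence that, combined with Proposition \ref{direct sum of disjoint C*-extreme maps}, this recovers Theorem 2.1 of \cite{FaZh}: an arbitrary finite direct sum of pure UCP maps (not necessarily compressions of a single representation) is $\cst$-extreme iff within each disjointness class the corresponding ranges form a nest, since the disjoint pieces split off by Proposition \ref{direct sum of disjoint C*-extreme maps} and each non-disjoint block is handled by the computation above; in finite dimensions this is precisely the Farenick--Zhou criterion.

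The only real subtlety --- hardly an obstacle --- is the bookkeeping around whether the $\phi_i$ are allowed to coincide up to unitary equivalence: Theorem \ref{direct sum of pure maps, compression of same representation} is stated for non-unitarily equivalent families, so strictly the corollary should be read with that hypothesis, or one must note that repeated summands can be absorbed (passing from $\psi(\cdot)\otimes I_{\K}$ summands to a single $\psi(\cdot)\otimes I_{\K\oplus\K'}$) without changing $\cst$-extremity, at which point ``$\{\ran(V_i)\}$ is a nest'' is the surviving condition. Everything else is an immediate unwinding of definitions together with the completeness of finite nests.
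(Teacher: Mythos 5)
Your proposal is correct and follows essentially the same route as the paper: the corollary is deduced from Theorem \ref{direct sum of pure maps, compression of same representation} by observing that Condition (2) holds automatically, since the completion of a finite nest (adjoining $\{0\}$, the whole space, and all meets and joins) is still finite and hence countable. The extra bookkeeping you do about unitarily equivalent summands being absorbed into the multiplicity spaces $\K_i$ is consistent with how the theorem is set up and does not change the argument.
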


If $\Lambda$ is a subset of the set of integers $\mathbb{Z}$, and  if $\E=\{E_n\}_{n\in\Lambda}$ is a nest in a Hilbert space $\K$ with the property that $E_n\subseteq E_{m}$ for $n<m$, then  the completion of $\E$ is given by the nest $\E\cup\{0,\K, \vee_{n\in\Lambda}E_n, \wedge_{n\in\Lambda}E_n\}$, which is already countable. Thus the following corollary is immediate from Theorem \ref{direct sum of pure maps, compression of same representation}.

\begin{corollary}\label{direct sum of pure maps indexed by natural numbers give C*-extreme point}
Let $\Lambda=\mathbb{N}$ or $\mathbb{Z}$ or ${\mathbb Z}_-$, or $\{
1, 2, \ldots , m\}$ for some $m\in {\mathbb N}$, and let
$\phi_n:\A\to\B(\h_n)$ be a pure UCP map for $n\in\Lambda$.
If  $\phi_n$ is a compression of $\phi_{n+1}$ for each $n$ with $n,
n+1 \in \Lambda $, then the direct sum
$\phi=\oplus_{n\in\Lambda}\phi_n$
is a $\cst$-extreme point in $\sha$, where $\h=\oplus_{n\in\Lambda}\h_n$.
\end{corollary}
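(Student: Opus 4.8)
The plan is to reduce this corollary directly to Theorem~\ref{direct sum of pure maps, compression of same representation} by verifying its two hypotheses in the special case at hand. First I would observe that all the $\phi_n$ are compressions of a single irreducible representation: since $\phi_1$ is pure, its minimal Stinespring triple is $(\pi,V_1,\h_{\pi_1})$ with $\pi$ irreducible, and the hypothesis that $\phi_n$ is a compression of $\phi_{n+1}$ propagates upward and downward along $\Lambda$, so (using the discussion preceding Definition~\ref{definition of C*-extreme points} on compressions of pure maps) every $\phi_n$ has minimal Stinespring triple $(\pi,V_n,\h_\pi)$ for the \emph{same} $\pi$, with $\ran(V_n)\subseteq\ran(V_{n+1})$ whenever $n,n+1\in\Lambda$. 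Here I would need to be a little careful: the compression relation is given as $\phi_n=W_n^*\phi_{n+1}(\cdot)W_n$ for an isometry $W_n\colon\h_n\to\h_{n+1}$, so one gets $V_{n+1}W_n$ as a Stinespring map for $\phi_n$, and after identifying minimal triples one may take $V_n=V_{n+1}W_n$, giving $V_nV_n^*=V_{n+1}W_nW_n^*V_{n+1}^*\leq V_{n+1}V_{n+1}^*$, i.e. $\ran(V_n)\subseteq\ran(V_{n+1})$.

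Next I would check Condition~(1): since $\Lambda$ is an interval of integers and $\ran(V_n)\subseteq\ran(V_{n+1})$ for consecutive indices, transitivity gives $\ran(V_n)\subseteq\ran(V_m)$ for all $n\le m$ in $\Lambda$, so $\{\ran(V_n)\}_{n\in\Lambda}$ is totally ordered by inclusion, hence a nest in $\h_\pi$, and the order it induces on $\Lambda$ is the usual one. (If some $\ran(V_n)=\ran(V_m)$ for $n\neq m$ then $\phi_n$ and $\phi_m$ are unitarily equivalent; one can either quotient out repetitions or simply note that Theorem~\ref{direct sum of pure maps, compression of same representation} is applied after passing to a subfamily of non-unitarily-equivalent maps — I would phrase the reduction so that this is handled cleanly, e.g. by grouping equal ranges together into a single $\psi\otimes I_\K$ summand.) Then for Condition~(2), with $\L_i=\oplus_{j\le i}\h_j$ (taking all multiplicity spaces $\K_i$ one-dimensional, or absorbing repetitions into them), the nest $\{\L_i\}_{i\in\Lambda}$ is indexed by a subset of $\mathbb{Z}$ with $\L_i\subsetneq\L_m$ for $i<m$; by the observation recorded just before the statement of this corollary, the completion of such a nest is obtained by adjoining only $\{0\}$, $\h$, $\vee_{i\in\Lambda}\L_i$ and $\wedge_{i\in\Lambda}\L_i$, so it is countable. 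Invoking Theorem~\ref{direct sum of pure maps, compression of same representation} then yields that $\phi=\oplus_{n\in\Lambda}\phi_n$ is $\cst$-extreme in $\sha$.

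The only genuine subtlety — and the step I would present most carefully — is the bookkeeping needed to match the setup of Theorem~\ref{direct sum of pure maps, compression of same representation}, which is stated for a family of \emph{non-unitarily-equivalent} pure maps tensored with multiplicity spaces. If the given $\phi_n$ happen to repeat up to unitary equivalence (which, given the chain of compressions, means $\ran(V_n)$ is eventually constant on some sub-interval), one must collapse each maximal run of mutually equivalent $\phi_n$'s into one map $\psi\otimes I_{\K}$ with $\K$ of the appropriate dimension, check that this does not change the direct sum up to unitary equivalence, and verify Condition~(2) for the resulting coarser nest — which is again a sub-nest of $\mathbb{Z}$ and hence has countable completion. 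Everything else is a routine unwinding of definitions, so I expect the bulk of the write-up to be the Stinespring-triple identification in the first paragraph and this collapsing argument; no hard estimates are involved.
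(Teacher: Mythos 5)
Your proposal is correct and follows essentially the same route as the paper: the paper also derives this corollary directly from Theorem \ref{direct sum of pure maps, compression of same representation}, using the observation (recorded just before the statement) that a nest indexed by a subset of $\mathbb{Z}$ has completion obtained by adjoining only $\{0\}$, $\h$, and the overall $\vee$ and $\wedge$, hence countable. Your extra care about collapsing unitarily equivalent summands into a single $\psi\otimes I_{\K}$ block is a legitimate bookkeeping point that the paper leaves implicit, but it does not change the argument.
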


We end this section by  giving a necessary and sufficient criterion
for a direct sum of pure UCP maps to be extreme.  Note that in view
of Proposition \ref{direct sum of disjoint C*-extreme maps}, it is
enough to consider direct sums of only those pure UCP maps which are
compression of the same irreducible representation.

\begin{proposition}\label{extreme point criteria for direct sum of pure maps}
Let $\phi_i:\A\to\B(\h_i)$, $i\in\Lambda$, be a family of  pure UCP maps  with respective minimal Stinespring triple $(\pi,V_i,\hpi)$.
Then $\phi=\oplus_{i\in\Lambda}\phi_i$ is extreme in $S_{\oplus_{i\in\Lambda}\h_i}(\A)$ if and only if $V_i^*V_j\neq 0$ for all $i,j\in\Lambda$.
\end{proposition}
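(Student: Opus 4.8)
The plan is to use Arveson's extreme point condition (Theorem \ref{Extreme point condition}), which says $\phi$ is extreme if and only if the map $T \mapsto V^*TV$ from $\rho(\A)'$ to $\B(\h)$ is injective, where $(\rho, V, \h_\rho)$ is the minimal Stinespring triple for $\phi = \oplus_{i\in\Lambda}\phi_i$. By Remark \ref{minimal Stinespring of direct sum}, $\h_\rho = \oplus_{i\in\Lambda}\hpi$, $V = \oplus_{i\in\Lambda}V_i$, and $\rho = \oplus_{i\in\Lambda}\pi$ (the same irreducible $\pi$ repeated). Since $\pi$ is irreducible, $\pi(\A)'' = \bhpi$, so $\rho(\A)' = (\pi(\A)\otimes I_\Lambda)' = I_{\hpi}\otimes \B(\ell^2(\Lambda))$; concretely, writing operators in matrix form, $\rho(\A)'$ consists of all $[c_{ij}I_{\hpi}]$ with $[c_{ij}] \in \B(\ell^2(\Lambda))$. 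For such an operator $T = [c_{ij}I_{\hpi}]$, one computes $V^*TV = [c_{ij}V_i^*V_j]$.

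First I would establish the easy direction. If $V_i^*V_j = 0$ for some pair $i,j$, I want to produce a nonzero $T \in \rho(\A)'$ with $V^*TV = 0$. If $i = j$ this is impossible since $V_i$ is an isometry, so $V_i^*V_i = I \neq 0$; hence assume $i \neq j$. Take $T$ to be the operator with $c_{ij} = 1$, $c_{ji} = \overline{c_{ij}}$ chosen so $T$ is self-adjoint (so $c_{ji} = 1$), and all other entries zero. Then $V^*TV$ has $(i,j)$ entry $V_i^*V_j = 0$ and $(j,i)$ entry $V_j^*V_i = (V_i^*V_j)^* = 0$, with all other entries zero, so $V^*TV = 0$ while $T \neq 0$; thus $\phi$ is not extreme. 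This gives the contrapositive of the "only if" direction.

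For the converse, assume $V_i^*V_j \neq 0$ for all $i,j\in\Lambda$; I must show $V^*TV = 0$ forces $T = 0$ for $T = [c_{ij}I_{\hpi}] \in \rho(\A)'$. The condition $V^*TV = 0$ means $c_{ij}V_i^*V_j = 0$ for all $i,j$. Since each $\phi_i$ is pure and compression of the irreducible $\pi$, the operator $V_i^*V_j \in \B(\hpi)$ is nonzero by hypothesis, hence $c_{ij} = 0$ for every pair $i,j$, so $[c_{ij}] = 0$ and $T = 0$. By Arveson's criterion, $\phi$ is extreme. The routine points to verify carefully are: (a) $\rho(\A)'$ really is $I_{\hpi}\otimes\B(\ell^2(\Lambda))$ — this uses irreducibility of $\pi$ exactly as in the proof of Theorem \ref{direct sum of pure maps, compression of same representation}; (b) the matrix identity $V^*TV = [c_{ij}V_i^*V_j]$, which is a direct block computation; (c) the bookkeeping that $T$ self-adjoint in the easy direction is harmless (alternatively one need not require self-adjointness, as Arveson's criterion applied to not-necessarily self-adjoint $T$ still detects non-extremity, or one symmetrizes).

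The main obstacle, such as it is, is purely the infinite-indexing subtlety: one should make sure the matrix manipulations with $[c_{ij}]$ genuinely represent bounded operators in $\B(\ell^2(\Lambda))$ and that the block-matrix computation of $V^*TV$ converges appropriately (in the strong operator topology), so that "$c_{ij}V_i^*V_j = 0$ for all $i,j$" is legitimately equivalent to "$V^*TV = 0$". Once the matrix picture is set up correctly — exactly as already done in the proof of Theorem \ref{direct sum of pure maps, compression of same representation} — the argument is short; there is no deep analytic content beyond Arveson's criterion and irreducibility of $\pi$.
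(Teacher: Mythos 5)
Your proposal is correct and follows essentially the same route as the paper: both identify $\rho(\A)'$ as $I_{\hpi}\otimes\B(\ell^2(\Lambda))$ via irreducibility of $\pi$, compute $V^*TV=[\lambda_{ij}V_i^*V_j]$ blockwise, and apply Arveson's injectivity criterion in both directions (the paper uses a single-entry test matrix for the forward implication rather than your symmetrized two-entry contrapositive, but this is an immaterial difference). Your remark that self-adjointness of $T$ is not needed is right, since Theorem \ref{Extreme point condition} requires injectivity on all of $\pi(\A)'$.
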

\begin{proof}
Set $\h=\oplus_{i\in\Lambda}\h_i$. Note that $(\rho, V, \h_\rho)$ is the minimal Stinespring triple for $\phi$, where $\h_\rho=\oplus_{i\in\Lambda}\hpi$, $\rho=\oplus_{i\in\Lambda}\pi$ and $V=\oplus_{i\in\Lambda}V_i$. Since $\pi$ is irreducible, $\pi(\A)'=\C\cdot I_{\hpi}$; so it follows that
\begin{equation*}
    \rho(\A)'=\left\{[\lambda_{ij}I_{\hpi}];~\lambda_{ij}\in\C\right\}\subseteq\B(\oplus_{i\in\Lambda}\h_{\pi}).
\end{equation*}
First assume that $\phi$ is extreme in $S_{\h}(\A)$,  and fix
$m,n\in\Lambda$. Let $\lambda\neq 0$ in $\C$. Consider the operator
$T=[\lambda_{ij}I_{\hpi}]\in\rho(\A)'$, where $\lambda_{mn}=\lambda$
and $\lambda_{ij}=0$ otherwise. Then $T\neq 0$. Since $\phi$ is
extreme, it follow from Arveson's extreme point condition (Theorem
\ref{Extreme point condition}) that $V^*TV\neq0$. But
$V^*TV=[\lambda_{ij}V_i^*V_j]$, and since  $\lambda_{ij}V_i^*V_j=0$
for all $(i,j)\neq (m,n)$, it follows that  $\lambda V_m^*V_n\neq
0$, showing that $V_m^*V_n\neq 0$.

Conversely, let $V_i^*V_j\neq 0$ for all $i,j\in\Lambda$.  Let
$T=[\lambda_{ij}I_{\hpi}]\in\rho(\A)'$, $\lambda_{ij}\in\C$, be such
that $V^*TV=0$. Then for each $i,j\in\Lambda$, we have
$\lambda_{ij}V_i^*V_j=0$, which yields $\lambda_{ij}=0$; hence
$T=0$. Again  by extreme point condition of Arveson, we conclude
that $\phi$ is extreme in $\sha$.
\end{proof}

The following corollary is another condition (along which
Proposition \ref{multiplicity free}) under which a $\cst$-extreme
map is also extreme.

\begin{corollary}\label{direct sum cst extreme is also extreme}
Let $\phi\in\sha$ decompose as a direct sum of pure UCP maps. If $\phi$ is a $C^*$-extreme point in $\sha$, then $\phi$ is also an extreme point in $\sha$.
\end{corollary}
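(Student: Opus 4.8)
The plan is to combine Theorem \ref{direct sum of pure UCP maps} with Proposition \ref{extreme point criteria for direct sum of pure maps} together with Proposition \ref{direct sum of disjoint C*-extreme maps}. Write $\phi$, up to unitary equivalence, in the normal form from Theorem \ref{direct sum of pure UCP maps}, namely $\phi = \bigoplus_{\alpha\in\Gamma}\phi^{(\alpha)}$ where $\phi^{(\alpha)} = \bigoplus_{i\in\Lambda_\alpha}\psi_\alpha^i(\cdot)\otimes I_{\K_\alpha^i}$, the $\psi_\alpha^i$ being pairwise non-unitarily-equivalent pure UCP maps sharing a single irreducible representation $\pi_\alpha$, and the $\pi_\alpha$ pairwise disjoint. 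Since the maps $\phi^{(\alpha)}$ are mutually disjoint (their representations are), Proposition \ref{direct sum of disjoint C*-extreme maps} reduces both the $\cst$-extreme hypothesis and the desired extreme conclusion to the individual blocks: it suffices to show that each $\phi^{(\alpha)}$, which is $\cst$-extreme by hypothesis and Proposition \ref{direct sum of disjoint C*-extreme maps}, is extreme.

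So fix $\alpha$ and work inside one block; drop the subscript $\alpha$. We have pure maps $\psi_i$, $i\in\Lambda$, with common minimal Stinespring triple $(\pi, V_i, \hpi)$, and $\phi = \bigoplus_{i\in\Lambda}\psi_i(\cdot)\otimes I_{\K_i}$ is $\cst$-extreme. By Theorem \ref{direct sum of pure maps, compression of same representation}, Condition (1) holds: $\{\ran(V_i)\}_{i\in\Lambda}$ is a nest in $\hpi$. In particular, for any $i,j\in\Lambda$ the ranges are comparable, say $\ran(V_i)\subseteq\ran(V_j)$, equivalently $V_iV_i^*\leq V_jV_j^*$, whence $V_jV_j^*V_i = V_i$ and thus $V_j^*V_i = V_j^*V_jV_j^*V_i = V_j^*V_i$... more to the point, since $V_i$ is an isometry, $V_iV_i^*V_i = V_i \neq 0$, and $V_iV_i^*\leq V_jV_j^*$ gives $V_jV_j^*V_i = V_i$, so $V_j^*V_i$ has the property that $V_j(V_j^*V_i) = V_jV_j^*V_i = V_i \neq 0$, forcing $V_j^*V_i\neq 0$. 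The symmetric argument handles $\ran(V_j)\subseteq\ran(V_i)$. Hence $V_i^*V_j\neq 0$ for all $i,j\in\Lambda$. But a pure map $\psi_i(\cdot)\otimes I_{\K_i}$ has minimal Stinespring representation $\pi(\cdot)\otimes I_{\K_i}$ and isometry $V_i\otimes I_{\K_i}$; one checks that the condition of Proposition \ref{extreme point criteria for direct sum of pure maps}, in the form ``$(V_i\otimes I_{\K_i})^*(V_j\otimes I_{\K_j})\neq 0$'', is equivalent to $V_i^*V_j\neq 0$, so Proposition \ref{extreme point criteria for direct sum of pure maps} (applied to the family $\{\psi_i(\cdot)\otimes I_{\K_i}\}$, or more directly re-running its proof with $\rho(\A)' = I_{\hpi}\otimes\bk$) yields that $\phi^{(\alpha)}$ is extreme.

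Finally, assembling the blocks, each $\phi^{(\alpha)}$ is extreme in $S_{\h_\alpha}(\A)$, so by the ``extreme'' half of Proposition \ref{direct sum of disjoint C*-extreme maps} the disjoint direct sum $\phi = \bigoplus_{\alpha\in\Gamma}\phi^{(\alpha)}$ is extreme in $\sha$. I expect the only mildly delicate point to be matching up the statement of Proposition \ref{extreme point criteria for direct sum of pure maps}, which is phrased for pure maps $\phi_i$ with triples $(\pi, V_i, \hpi)$, with the tensored pure maps $\psi_i(\cdot)\otimes I_{\K_i}$ appearing here; this is handled by noting that $\pi(\cdot)\otimes I_{\K_i}$ remains irreducible only if $\K_i$ is one-dimensional, so one should instead observe directly that $\rho(\A)' = I_{\hpi}\otimes\mathcal{B}(\oplus_i\K_i)$ and that an element of it killed by $V^*(\cdot)V$ must vanish precisely because $V_i^*V_j\neq 0$ — essentially the computation in the proof of Proposition \ref{extreme point criteria for direct sum of pure maps} goes through verbatim with scalars replaced by operator blocks $T_{ij}\in\mathcal{B}(\K_j,\K_i)$ and $\lambda_{ij}V_i^*V_j = 0$ replaced by $V_i^*V_j\otimes T_{ij} = 0 \Rightarrow T_{ij}=0$. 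Alternatively, and perhaps most cleanly, one invokes Proposition \ref{multiplicity free} is not available (the commutant is not commutative), so the block-computation route via Proposition \ref{direct sum of disjoint C*-extreme maps} and Theorem \ref{direct sum of pure maps, compression of same representation} is the one to take.
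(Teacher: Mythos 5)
Your proposal is correct and follows essentially the same route as the paper: reduce to a single non-disjoint block via Proposition \ref{direct sum of disjoint C*-extreme maps}, extract the nest condition from Theorem \ref{direct sum of pure maps, compression of same representation} to get $V_i^*V_j\neq 0$, and conclude with Proposition \ref{extreme point criteria for direct sum of pure maps}. The paper avoids your tensor-factor wrinkle by keeping the decomposition as a plain direct sum of pure maps (possibly with unitarily equivalent repeats, which then share ranges) and applying Proposition \ref{extreme point criteria for direct sum of pure maps} directly, but your block-matrix fix is valid.
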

\begin{proof}
Let $\phi=\oplus_{i\in\Lambda}\phi_i$ for some pure UCP maps $\phi_i, i\in\Lambda$.
By separating out disjoint UCP maps and then invoking Proposition \ref{direct sum of disjoint C*-extreme maps} if needed, we  assume without loss of generality that each $\phi_i$ is a compression of the same irreducible representation, say $\pi$.
Let $(\pi, V_i,\hpi)$ be the  minimal Stinespring triple for $\phi_i$. Since $\phi$ is $\cst$-extreme, it follows from Theorem \ref{direct sum of pure maps, compression of same representation} that either $V_iV_i^*\geq V_jV_j^*$ or $V_jV_j^*\geq V_iV_i^*$ for all $i,j\in\Lambda$. In either case, it is immediate that $V_i^*V_j\neq 0$ for $i,j\in\Lambda$. The required assertion now follows from Proposition \ref{extreme point criteria for direct sum of pure maps}.
\end{proof}

\section{Normal $C^*$-extreme  maps}\label{section:normal UCP maps}

Our attention now shifts towards the study of structure of normal $\cst$-extreme  maps on von Neumann algebras, specifically on type $I$ factors (i.e. $\bg$ for some Hilbert space $\G$). First, we see some basic properties and examples of such maps.  The main result of this section (Theorem \ref{normal C*-extreme maps and iff criteria}) provides necessary and sufficient criteria for normal $\cst$-extreme UCP maps to be direct sums of normal pure UCP maps.

Let $\B\subseteq\bg$ be a von Neumann algebra. Recall that a positive linear map $\phi:\B\to\bh$ is called {\em normal} if whenever $\{X_i\}$ is a net of increasing (or decreasing) self-adjoint operators converging to $X$ in strong operator topology (SOT), then $\phi(X_i)\to\phi(X)$ in SOT.

Let $NS_\h(\B)$ denote the collection of all normal UCP maps from $\B$ to $\bh$. It is clear that $NS_\h(\B)$ itself is  a $\cst$-convex set. Hence one can define and study $\cst$-extreme points of $NS_\h(\B)$ on the lines of Definition \ref{definition of C*-extreme points}, and look into its structure. However we see below (Proposition \ref{normal C*-extreme points does not depend on the set}) that any normal UCP  map on $\B$ is $\cst$-extreme in $NS_\h(\B)$ if and only if it is $\cst$-extreme in $S_\h(\B)$. Therefore it does not matter whether we explore  $\cst$-extremity conditions in the set $NS_\h(\B)$ or the set $ S_\h(\B)$.

\begin{lemma}\label{CP maps dominated by normal maps are normal}
Let $\phi,\psi:\B\to\bh$ be two completely positive maps such that $\psi\leq \phi$. If $\phi$ is normal, then $\psi$ is normal.
\end{lemma}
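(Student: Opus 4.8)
The plan is to use the Radon--Nikodym type theorem (Theorem \ref{Radon-nikodym type theorem}) to realize $\psi$ in terms of the minimal Stinespring triple of $\phi$, and then reduce normality of $\psi$ to normality of $\phi$ directly from the definition. Let $(\pi,V,\h_\pi)$ be the minimal Stinespring triple for $\phi$, so $\phi(X)=V^*\pi(X)V$ for all $X\in\B$. Since $\psi\leq\phi$, Theorem \ref{Radon-nikodym type theorem} gives a positive contraction $T\in\pi(\B)'$ with $\psi(X)=V^*T\pi(X)V$ for all $X\in\B$. Writing $T^{1/2}$ (which also lies in $\pi(\B)'$, being a von Neumann algebra) and using $T^{1/2}\pi(X)=\pi(X)T^{1/2}$, we get $\psi(X)=(T^{1/2}V)^*\pi(X)(T^{1/2}V)=W^*\pi(X)W$ where $W=T^{1/2}V\in\B(\h,\h_\pi)$.

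The main step is then to show: if $\phi(X)=V^*\pi(X)V$ is normal for \emph{one} (hence any) Stinespring-type representation $\pi$, then for any operator $W$ the map $X\mapsto W^*\pi(X)W$ is normal. For this I would take a net $\{X_i\}$ of self-adjoint operators in $\B$ increasing (the decreasing case is symmetric) with SOT-limit $X$. Normality of $\phi$ together with the fact that for a $\ast$-representation $\pi$ on a von Neumann algebra the condition that $\phi$ is normal forces $\pi$ itself to be normal on the relevant pieces — more directly, I would argue as follows: the functional $\xi\mapsto\langle\pi(X_i)\xi,\xi\rangle$ is increasing and bounded for each $\xi$, and one shows $\pi(X_i)\to\pi(X)$ in the weak operator topology (WOT), using that normality of $\phi$ gives $\langle\pi(X_i)V h,Vh\rangle\to\langle\pi(X)Vh,Vh\rangle$ for all $h\in\h$, and then polarization plus density of $[\pi(\B)V\h]=\h_\pi$ (minimality!) propagates WOT-convergence of $\pi(X_i)$ to all of $\h_\pi$; boundedness and monotonicity upgrade this to SOT-convergence on $\h_\pi$. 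Once $\pi(X_i)\to\pi(X)$ in SOT, composing with the bounded maps $W$ and $W^*$ yields $W^*\pi(X_i)W\to W^*\pi(X)W$ in SOT, i.e. $\psi$ is normal.

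The step I expect to be the main obstacle is the propagation of convergence from the range of $V$ to all of $\h_\pi$: a priori normality of $\phi$ only controls matrix coefficients $\langle\pi(X_i)Vh,Vk\rangle$, and one needs to leverage minimality $[\pi(\B)V\h]=\h_\pi$ together with the module property $\pi(ab)=\pi(a)\pi(b)$ to reach vectors of the form $\pi(a)Vh$, and then a density/uniform-boundedness argument (the net $\{\pi(X_i)\}$ is norm-bounded on any order-bounded piece) to reach arbitrary vectors. A cleaner alternative, which I would fall back on if the coefficient argument gets technical, is to invoke the standard fact that a unital $\ast$-homomorphism between von Neumann algebras is normal iff its restriction composed with a faithful-enough compression is normal; but the self-contained route above, using only the definition of normality and minimality of the Stinespring dilation, seems preferable and is likely what the authors intend.
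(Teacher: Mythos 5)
Your argument is correct in outline, but it takes a genuinely different and much heavier route than the paper. The paper's proof is a three-line squeeze: since $\phi-\psi$ is completely positive, $0\leq\psi(X_i)\leq\phi(X_i)$ for a net of positive elements $X_i\downarrow 0$; the decreasing positive net $\psi(X_i)$ converges in SOT to some $Y\geq0$, and passing to the limit in $\psi(X_i)\leq\phi(X_i)\to0$ forces $Y\leq0$, hence $Y=0$. No Stinespring triple, no Radon--Nikodym theorem, no analysis of $\pi$ is needed. Your route instead establishes the stronger intermediate fact that the minimal Stinespring representation of a normal CP map is itself normal; this is true and your sketch has the right ingredients, but the ``propagation'' step you flag can be done more cleanly than by polarization: for $X_i\downarrow0$ the bounded decreasing net $\pi(X_i)$ converges in SOT to some $P\geq0$, and $\langle P\pi(a)Vh,\pi(a)Vh\rangle=\lim\langle\phi(a^*X_ia)h,h\rangle=0$ (here $a^*X_ia\downarrow0$ is again monotone, so normality of $\phi$ applies directly), whence $\pi(a)Vh\in\ker P$ and minimality gives $P=0$ --- this avoids the cross terms $\phi(b^*X_ia)$ entirely, which ordinary Hilbert-space polarization alone does not handle since $b^*X_ia$ is not a monotone net. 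In short: your proof works and proves more, but for the lemma as stated the order-theoretic squeeze is both shorter and avoids every technical issue you anticipated.
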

\begin{proof}
Let $\{X_i\}$ be a net of decreasing positive elements in $\B$ such that $X_i\downarrow 0$ in SOT. Then $\phi(X_i)\to 0$ in SOT, as $\phi$ is normal. As $\psi$ is positive, we note that $\{\psi(X_i)\}$ is a decreasing net of positive elements; hence  $\psi(X_i)\to Y$ in SOT for some positive operator $Y\in\bh$. But since $\psi(X_i)\leq \phi(X_i)$ for all $i$, it follows by taking limit in SOT that $Y\leq 0$; hence $Y=0$.
\end{proof}

\begin{proposition}\label{normal C*-extreme points does not depend on the set}
A normal UCP map $\phi:\B\to\bh$ is $\cst$-extreme in  $NS_\h(\B)$ if and only if  it is $\cst$-extreme in $ S_\h(\B)$.
\end{proposition}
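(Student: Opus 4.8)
The plan is to note that one implication is immediate from the set inclusion $NS_\h(\B)\subseteq S_\h(\B)$, and that the other follows once we observe, via Lemma \ref{CP maps dominated by normal maps are normal}, that the components of \emph{any} $\cst$-convex combination of a normal map are automatically normal.

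First I would dispose of the easy direction: suppose $\phi$ is $\cst$-extreme in $S_\h(\B)$ and let $\phi(\cdot)=\sum_{i=1}^n T_i^*\phi_i(\cdot)T_i$ be a proper $\cst$-convex combination with $\phi_i\in NS_\h(\B)$. Since $NS_\h(\B)\subseteq S_\h(\B)$, this is also a proper $\cst$-convex combination inside $S_\h(\B)$, so $\cst$-extremity of $\phi$ there forces each $\phi_i$ to be unitarily equivalent to $\phi$; hence $\phi$ is $\cst$-extreme in $NS_\h(\B)$.

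For the converse, assume $\phi$ is $\cst$-extreme in $NS_\h(\B)$ and let $\phi(\cdot)=\sum_{i=1}^n T_i^*\phi_i(\cdot)T_i$ be a proper $\cst$-convex combination of $\phi$, where a priori the $\phi_i\in S_\h(\B)$ need not be normal. The crux is to show each $\phi_i$ is in fact normal. Put $\psi_i:=T_i^*\phi_i(\cdot)T_i$; then $\phi-\psi_i=\sum_{j\neq i}T_j^*\phi_j(\cdot)T_j$ is completely positive, so $\psi_i\leq\phi$, and since $\phi$ is normal, Lemma \ref{CP maps dominated by normal maps are normal} gives that $\psi_i$ is normal. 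Because the combination is proper, $T_i$ is invertible and $\phi_i(\cdot)=(T_i^{-1})^*\psi_i(\cdot)T_i^{-1}$. Now I would use the elementary fact that compressing a normal completely positive map by a fixed bounded operator preserves normality: if $\psi:\B\to\bh$ is normal, $S\in\bh$, and a monotone net $X_\alpha\to X$ in SOT, then $\psi(X_\alpha)\to\psi(X)$ in SOT, and since $A\mapsto S^*AS$ is SOT--SOT continuous (for $\xi\in\h$, $S^*\psi(X_\alpha)S\xi=S^*\bigl(\psi(X_\alpha)(S\xi)\bigr)\to S^*\bigl(\psi(X)(S\xi)\bigr)$), we conclude $S^*\psi(X_\alpha)S\to S^*\psi(X)S$ in SOT. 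Applying this with $S=T_i^{-1}$ shows $\phi_i$ is normal.

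Consequently $\phi(\cdot)=\sum_{i=1}^n T_i^*\phi_i(\cdot)T_i$ is a proper $\cst$-convex combination of $\phi$ within $NS_\h(\B)$, so $\cst$-extremity of $\phi$ in $NS_\h(\B)$ yields that each $\phi_i$ is unitarily equivalent to $\phi$, i.e.\ $\phi$ is $\cst$-extreme in $S_\h(\B)$. I do not expect a genuine obstacle here: the only non-formal ingredients are Lemma \ref{CP maps dominated by normal maps are normal} and the SOT-continuity of compression by a fixed bounded operator, both of which are routine.
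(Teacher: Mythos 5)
Your proposal is correct and follows essentially the same route as the paper: the trivial direction from the inclusion $NS_\h(\B)\subseteq S_\h(\B)$, and for the converse the observation that each summand $T_i^*\phi_i(\cdot)T_i\leq\phi$ is normal by Lemma \ref{CP maps dominated by normal maps are normal}, hence each $\phi_i$ is normal. The only difference is that you spell out the SOT-continuity of compression by $T_i^{-1}$, which the paper leaves implicit.
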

\begin{proof}
 Since $NS_\h(\B)\subseteq S_\h(\B)$, it is immediate that every normal $\cst$-extreme point of  $ S_\h(\B)$ is also a $\cst$-extreme point of $NS_\h(\B)$. Conversely, let  $\phi$ be a $\cst$-extreme point of $NS_\h(\B)$. Let $\phi=\sum_{i=1}^nT_i^*\phi_i(\cdot) T_i$ be a proper $\cst$-convex combination in $ S_\h(\B)$ for some $\phi_i\in  S_\h(\B)$. Then for each $i$, we have $T_i^*\phi_i(\cdot)T_i\leq \phi(\cdot)$, so it follows from Lemma \ref{CP maps dominated by normal maps are normal} that $T_i^*\phi_i(\cdot)T_i$ is normal; hence $\phi_i$ is normal. Since $\phi$ is $\cst$-extreme in $NS_\h(\B)$, there is a unitary $U_i\in\bh$ such that $\phi_i(\cdot)=U_i^*\phi(\cdot)U_i$, as required to prove that $\phi$ is $\cst$-extreme  in $ S_\h(\B)$.
\end{proof}

For the rest of this section, we assume that all von Neumann algebras are of the form $\bg$ for some separable Hilbert space $\G$.
We now recall the well-known structure of normal representations and the Stinespring dilation of normal UCP maps (see Theorem 1.41, \cite{Pi}).

\begin{theorem}\label{Stinespring dilation for normal maps}
Let $\phi:\bg\to\bh $ be a  normal UCP map. Then there exist a separable Hilbert space $\K$ and an isometry $V:\h\to\mathcal{G}\otimes\K$ such that
\[\phi(X)=V^*(X\otimes I_\K )V ~\mbox{ for all }X\in\B,\]
and satisfies the minimality condition: $\mathcal{G}\otimes\K= \ospan\left\{(X\otimes I_\K )Vh; h\in\h,X\in\B\right\}.$
\end{theorem}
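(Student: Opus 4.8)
The plan is to deduce the statement from the general Stinespring dilation together with the classification of normal representations of the type $I$ factor $\bg$. First I would apply Stinespring's theorem to $\phi\colon\bg\to\bh$, obtaining a minimal triple $(\pi,V,\hpi)$ with $\phi(X)=V^*\pi(X)V$ and $\hpi=[\pi(\bg)V\h]$. Since $\phi$ is unital, $V$ is an isometry, and from $\phi(1)=I_\h$ one gets $V^*\pi(1)V=V^*V$; as $\pi(1)$ is a projection this forces $(I_{\hpi}-\pi(1))V=0$, hence $\pi(\bg)V\h\subseteq\ran(\pi(1))$, and by minimality $\pi(1)=I_{\hpi}$. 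So $\pi$ is a unital representation.

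The crucial step is to show that $\pi$ is normal. Here I would use that a positive (hence CP) map between von Neumann algebras is normal if and only if it is $\sigma$-weakly continuous, so $\phi$ is $\sigma$-weakly continuous. For fixed $h,k\in\h$ and $A,B\in\bg$, the functional $X\mapsto\langle\pi(X)\pi(A)Vh,\pi(B)Vk\rangle=\langle\phi(B^*XA)h,k\rangle$ is a composition of the $\sigma$-weakly continuous maps $X\mapsto B^*XA$, then $\phi$, then $T\mapsto\langle Th,k\rangle$, hence is a normal functional on $\bg$. Because the vectors $\pi(A)Vh$ span a dense subspace of $\hpi$ by minimality and $\pi$ is bounded, an $\varepsilon/3$-approximation upgrades this to: $X\mapsto\langle\pi(X)\xi,\eta\rangle$ is normal for all $\xi,\eta\in\hpi$, i.e. $\pi$ is normal.

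Next I would invoke (or reprove) the structure of normal unital representations of $\bg$. Fix a rank-one projection $e=\xi_1\xi_1^*$, extend $\xi_1$ to an orthonormal basis $\{\xi_n\}$ of $\G$, put $\K=\ran(\pi(e))$, and let $e_{mn}=\xi_m\xi_n^*$ be the matrix units. The operators $\pi(e_{n1})$ are partial isometries with initial space $\K$, the projections $\pi(e_{nn})$ are mutually orthogonal, and---this is where normality of $\pi$ genuinely enters---$\sum_n\pi(e_{nn})=\pi(1)=I_{\hpi}$ in SOT. One then checks that $W\colon\G\otimes\K\to\hpi$, $\xi_n\otimes\kappa\mapsto\pi(e_{n1})\kappa$, is a unitary with $W^*\pi(e_{mn})W=e_{mn}\otimes I_{\K}$, and by $\sigma$-weak density of finite sums of matrix units, together with normality of $\pi$ and of $X\mapsto X\otimes I_{\K}$, one gets $W^*\pi(X)W=X\otimes I_{\K}$ for all $X\in\bg$. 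Separability of $\K$ follows since $\K\subseteq\hpi$ and $\hpi=[\pi(\bg)V\h]$ equals the closed span of $\pi(\mathcal{C})V\h$, where $\mathcal{C}$ is the norm-separable, $\sigma$-weakly dense ideal of compact operators on $\G$ (using $\sigma$-weak continuity of $\pi$), and $\h$ is separable.

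Finally I would set $\widetilde{V}=W^*V\colon\h\to\G\otimes\K$, which is again an isometry, so that $\phi(X)=V^*\pi(X)V=\widetilde{V}^*(X\otimes I_{\K})\widetilde{V}$; applying $W^*$ to the minimality condition for $(\pi,V,\hpi)$ turns it into $\G\otimes\K=\ospan\left\{(X\otimes I_{\K})\widetilde{V}h\,;\,h\in\h,\ X\in\bg\right\}$, which is exactly the asserted minimality. Renaming $\widetilde{V}$ as $V$ completes the proof. I expect the main obstacle to be the normality of the Stinespring representation $\pi$; the remaining steps are either routine or standard type $I$ structure theory, the one genuinely von Neumann algebraic ingredient there being that normality forces $\sum_n\pi(e_{nn})=I_{\hpi}$ over an infinite orthonormal basis.
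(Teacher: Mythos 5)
The paper does not actually prove this statement: it is imported verbatim from the literature (Theorem 1.41 in Pisier's book, as cited), so there is no in-paper argument to compare yours against. Your proposal is a correct, self-contained proof of the cited fact, and it follows the standard route: minimal Stinespring triple, the observation that $\phi(1)=I_\h=V^*V$ forces $\pi(1)=I_{\hpi}$ via minimality, normality of $\pi$ obtained by pulling back vector functionals through $X\mapsto\langle\phi(B^*XA)h,k\rangle$ and using norm-closedness of the predual together with density of $\Span\,\pi(\A)V\h$, and then the matrix-unit construction of the unitary $W:\G\otimes\K\to\hpi$ with $\K=\ran(\pi(e_{11}))$. You correctly isolate the one place where normality is indispensable, namely $\sum_n\pi(e_{nn})=I_{\hpi}$ in SOT, and your separability argument for $\K$ (via the norm-separable, $\sigma$-weakly dense ideal of compacts and separability of $\h$) is sound; note that the countability of the orthonormal basis of $\G$ used in the matrix-unit step is covered by the paper's standing assumption in that section that $\G$ is separable. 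Two cosmetic remarks: the parenthetical ``(hence CP)'' after ``positive'' is backwards (positivity does not imply complete positivity), though irrelevant since $\phi$ is given as UCP; and the equivalence of normality with $\sigma$-weak continuity for bounded positive maps is a standard fact you are entitled to quote. Neither affects correctness.
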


In Theorem \ref{Stinespring dilation for normal maps}, if we
recognize the Hilbert space $\G\otimes\K$ as direct sum of $\dim\K $
copies of $\G$, we get the following structure theorem for normal
UCP maps (see Theorem 2.3, \cite{Davies}).

\begin{corollary}\label{expression for normal map as a sum}
Let $\phi:\bg\to\bh$ be a normal UCP map. Then there exists a finite or countable sequence $\{V_n\}_{n\geq1}$ of operators in $\B(\h,\G)$ such that
\begin{equation}
    \phi(X)=\sum_{n\geq 1}V_n^*XV_n~~\mbox{in SOT,}
\end{equation}
for all $X\in\B$.
\end{corollary}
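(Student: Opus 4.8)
The plan is to unwind the normal Stinespring dilation supplied by Theorem \ref{Stinespring dilation for normal maps}. By that theorem there are a separable Hilbert space $\K$ and an isometry $V:\h\to\G\otimes\K$ with $\phi(X)=V^*(X\otimes I_\K)V$ for all $X\in\bg$. Since $\K$ is separable, I would fix an orthonormal basis $\{e_n\}_{n\geq 1}$ of $\K$ (a finite or countable family, depending on $\dim\K$), and for each $n$ let $W_n\in\B(\G,\G\otimes\K)$ be the isometry $W_n g=g\otimes e_n$. Then $W_n^*W_n=I_\G$, the projections $W_nW_n^*$ onto $\G\otimes e_n$ are mutually orthogonal with $\sum_{n\geq 1}W_nW_n^*=I_{\G\otimes\K}$ in SOT, and, under the identification of $\G\otimes\K$ with the direct sum of $\dim\K$ copies of $\G$, the ampliation is block diagonal:
\[X\otimes I_\K=\sum_{n\geq 1}W_nXW_n^*\quad\text{in SOT}.\]
For the last identity one checks on a vector $\xi=\sum_m g_m\otimes e_m$ of $\G\otimes\K$ that $\sum_{n=1}^{N}W_nXW_n^*\xi=\sum_{n=1}^{N}(Xg_n)\otimes e_n\to(X\otimes I_\K)\xi$, the partial sums being bounded in norm by $\|X\|$.

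Next I would set $V_n:=W_n^*V\in\B(\h,\G)$ for each $n$. Compressing the resolution of identity by $V^*(\cdot)V$ gives $\sum_{n\geq 1}V_n^*V_n=V^*\bigl(\sum_{n\geq 1}W_nW_n^*\bigr)V=V^*V=I_\h$ in SOT (the partial sums are increasing and bounded by $I_\h$), so the $V_n$ form a column isometry. Then, since left multiplication by the fixed operator $V^*$ and right multiplication by the fixed operator $V$ are both SOT-continuous, I would apply $V^*(\cdot)V$ to the displayed SOT-convergent expansion of $X\otimes I_\K$ to obtain, for every $X\in\bg$,
\[\phi(X)=V^*(X\otimes I_\K)V=\sum_{n\geq 1}V^*W_nXW_n^*V=\sum_{n\geq 1}V_n^*XV_n\quad\text{in SOT},\]
which is exactly the asserted decomposition, with $\{V_n\}_{n\geq 1}$ finite or countable according to $\dim\K$.

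There is no substantial obstacle here; the only point requiring a little care is the bookkeeping of strong-operator convergence, namely that $X\otimes I_\K=\sum_n W_nXW_n^*$ converges in SOT (shown above) and that this convergence is preserved under the compression $T\mapsto V^*TV$ (if $T_N\to T$ in SOT then $V^*T_NVh=V^*T_N(Vh)\to V^*T(Vh)$ for each $h\in\h$, using $V,V^*$ bounded). Everything else is a purely formal manipulation of the dilation, so the corollary follows at once from Theorem \ref{Stinespring dilation for normal maps}.
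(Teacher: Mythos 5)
Your proof is correct and is exactly the argument the paper intends: the corollary is stated as following from Theorem \ref{Stinespring dilation for normal maps} by identifying $\G\otimes\K$ with a direct sum of $\dim\K$ copies of $\G$, which is precisely your decomposition $X\otimes I_\K=\sum_n W_nXW_n^*$ with $V_n=W_n^*V$. The SOT bookkeeping you supply is the only detail the paper leaves implicit, and you handle it correctly.
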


 Note that the commutator of the set $\{X\otimes I_\K;X\in\bg\}$ in $\mathcal{B}(\mathcal{G}\otimes\K)$ is the  algebra $\{I_\G\otimes T;T\in\bk\}$. So a normal UCP map $\phi:\bg\to\bh$ is  pure  if and only if $\dim\K=1$ i.e. $\phi(X)=V^*XV$ for some isometry $V$ from  $\h$ to $\G$.

The $\cst$-extreme condition (Corollary \ref{a C*-extreme criterian in language of factorization of positive operators}) for normal $\cst$-extreme points of $S_\h(\bg)$
translates as follows:

\begin{theorem}\label{subspacecriterion}
Let $\phi:\bg\to\bh $ be a normal UCP map with minimal Stinespring form $\phi(X)=V^*(X\otimes I_\K )V$, for some Hilbert space $\K$. Then $\phi$ is  $C^*$-extreme  in $S_\h(\bg)$ if and only if for any positive operator $D\in\bk$ with $V^*(I_\G\otimes D)V$ invertible, there exists $S\in\bk$ such that $D=S^*S$, $(I_\G\otimes S)VV^*=VV^*(I_\G\otimes S)VV^*$ and  $V^*(I_\G\otimes S)V$ is invertible.
\end{theorem}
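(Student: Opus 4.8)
The plan is to obtain this statement as an immediate translation of Corollary \ref{a C*-extreme criterian in language of factorization of positive operators} once the minimal Stinespring triple of a normal UCP map and its commutant are made explicit. First I would record that, by Theorem \ref{Stinespring dilation for normal maps}, the triple $(\pi, V, \G\otimes\K)$ with $\pi:\bg\to\B(\G\otimes\K)$ given by $\pi(X)=X\otimes I_\K$ is the minimal Stinespring triple for $\phi$. The next step is the computation of the commutant: since $\bg$ is a type $I$ factor (so that $\bg$ acts irreducibly on $\G$ and $\bg'=\C I_\G$), the commutant of $\bg\otimes I_\K$ in $\B(\G\otimes\K)$ consists precisely of the operators $I_\G\otimes T$ with $T\in\bk$, that is, $\pi(\bg)'=\{I_\G\otimes T:T\in\bk\}$. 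Consequently $T\mapsto I_\G\otimes T$ is a unital normal $*$-isomorphism of $\bk$ onto $\pi(\bg)'$; in particular it is positivity-preserving, multiplicative, $*$-preserving, and carries invertible elements to invertible elements in both directions.

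With this in hand, I would simply apply Corollary \ref{a C*-extreme criterian in language of factorization of positive operators} to $\phi$: the map $\phi$ is $C^*$-extreme in $S_\h(\bg)$ if and only if for every positive $\widetilde D\in\pi(\bg)'$ with $V^*\widetilde D V$ invertible there is $\widetilde S\in\pi(\bg)'$ with $\widetilde D=\widetilde S^*\widetilde S$, $\widetilde S VV^*=VV^*\widetilde S VV^*$, and $V^*\widetilde S V$ invertible. Writing $\widetilde D=I_\G\otimes D$ with $D\in\bk$ positive, the hypothesis becomes $V^*(I_\G\otimes D)V$ invertible. If the conclusion of the Corollary holds, then membership of $\widetilde S$ in $\pi(\bg)'$ forces $\widetilde S=I_\G\otimes S$ for a unique $S\in\bk$, and comparing $I_\G\otimes S^*S=I_\G\otimes D$ together with the two remaining conditions yields exactly $D=S^*S$, $(I_\G\otimes S)VV^*=VV^*(I_\G\otimes S)VV^*$, and $V^*(I_\G\otimes S)V$ invertible. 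Conversely, given $D,S\in\bk$ as in the statement, setting $\widetilde D=I_\G\otimes D$ and $\widetilde S=I_\G\otimes S$ recovers the data required by the Corollary. Hence the two lists of conditions are equivalent, which is the assertion.

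I do not expect any genuine difficulty here: the proof is entirely a matter of transporting Corollary \ref{a C*-extreme criterian in language of factorization of positive operators} along the $*$-isomorphism $\bk\cong\pi(\bg)'$. The only points deserving a line of justification are the identification $\pi(\bg)'=\{I_\G\otimes T:T\in\bk\}$ and the remark that the operator $S$ supplied by the Corollary automatically belongs to this commutant and is therefore of tensor form; the parenthetical reformulation that $V^*(I_\G\otimes S)V$ being invertible amounts to $(I_\G\otimes S)(V\h)\subseteq V\h$ with $(I_\G\otimes S)_{|_{V\h}}$ invertible transfers along verbatim and needs no extra argument.
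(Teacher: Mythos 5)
Your proposal is correct and matches the paper's approach exactly: the paper offers no separate proof, merely asserting that Corollary \ref{a C*-extreme criterian in language of factorization of positive operators} ``translates'' into this statement via the identification $\pi(\bg)'=\{I_\G\otimes T: T\in\bk\}$, which is precisely the transport-along-the-$*$-isomorphism argument you spell out. Nothing is missing.
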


\begin{remark}
Let $\phi:\bg\to\bh $ be a normal UCP map with minimal Stinespring
form $\phi(X)=V^*(X\otimes I_\K )V$. We identify the subspace  $V\h$
with $\h$, so that $\h$ is a subspace of $\G\otimes\K$. It then
follows from Theorem \ref{subspacecriterion}  that $\phi$ is a
$C^*$-extreme  point in $S_\h(\bg)$ if and only if the subspace $\h$
of $\mathcal{G}\otimes\K$ satisfies the following {\em factorization
property}:
\begin{enumerate}[label=(\textdagger)]
    \item\label{Property P}  for any positive operator $D\in\bk$ with $P_\h(I_\G\otimes D)_{|_\h}$   invertible,
     there exists $S\in\bk$ satisfying $D=S^*S$,  $(I_\G\otimes S)(\h)\subseteq\h$ and  $(I_\G\otimes S)_{|_\h}$ is invertible.
\end{enumerate}
 Therefore, in order to understand the structure of  normal $C^*$-extreme maps, one can characterize
   subspaces of $\mathcal{G}\otimes\K$ with factorization property \ref{Property P}.
\end{remark}

The following proposition provides a family of examples of subspaces
in $\G\otimes\K$ satisfying factorization property \ref{Property P}.

\begin{proposition}\label{direct sum of nest satisfies Property P}
Let $\h=\bigvee_{i\in\Lambda}\G_i\otimes \K_i$ be a subspace of $\mathcal{G}\otimes\K$, for some family $\{\G_i\}_{i\in\Lambda}$ and $\{\K_i\}_{i\in\Lambda}$  of subspaces of $\G$ and $\K$ respectively, such that $\G\otimes\K=\ospan\{(X\otimes I_\K)h; h\in\h, X\in\bg\}$. If either of the following is true:
\begin{enumerate}
    \item $\G_i\perp\G_j $ for all $i\neq j $ and
    $\{\K_i\}$ is a  nest whose completion is countable,
    \item  $\{\G_i\}$ is a  nest and $\K_i\perp\K_j$ for $i\neq j$ such that the completion of the nest $\{\oplus_{j\leq i}\K_j\}_{i\in\Lambda}$ is countable,
\end{enumerate}
then $\h$ satisfies factorization property \ref{Property P}.
\end{proposition}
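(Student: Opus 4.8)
The plan is to extract the minimal Stinespring data of $\phi$ and then treat the two cases by reducing them to results already in the paper. Writing $V:\h\hookrightarrow\G\otimes\K$ for the inclusion, the spanning hypothesis says precisely that $\phi(X)=V^*(X\otimes I_\K)V$ is in minimal Stinespring form with representation $\pi(X)=X\otimes I_\K$ on $\G\otimes\K$; since $\bg$ is a type $I$ factor, $\id_\bg$ is irreducible and $\pi(\bg)'=I_\G\otimes\bk$. By the Remark following Theorem \ref{subspacecriterion}, it suffices to show that $\phi$ is $\cst$-extreme in $S_\h(\bg)$, which is exactly the assertion that $\h$ has property \ref{Property P}.

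For case (2), orthogonality of the $\K_i$ makes the subspaces $\G_i\otimes\K_i$ mutually orthogonal, so $\h=\oplus_{i\in\Lambda}(\G_i\otimes\K_i)$, and $(X\otimes I_\K)(\G_i\otimes\K_i)\subseteq\G\otimes\K_i$ is orthogonal to $\G\otimes\K_j$ for $j\neq i$; hence $\phi=\oplus_{i\in\Lambda}(\psi^i\otimes I_{\K_i})$, where $\psi^i(X)=P_{\G_i}X_{|_{\G_i}}$ is a pure UCP map (a compression of the irreducible representation $\id_\bg$) with minimal Stinespring triple $(\id_\bg,\iota_i,\G)$ and $\ran(\iota_i)=\G_i$. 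After merging those indices $i$ with the same $\G_i$ (replacing the corresponding $\K_i$ by their orthogonal sum, which only shrinks the nest $\{\oplus_{j\leq i}\K_j\}$ and so preserves countability of its completion), the $\G_i$ become pairwise distinct and the $\psi^i$ pairwise non-unitarily equivalent, so conditions (1) and (2) of the proposition are precisely the hypotheses of Theorem \ref{direct sum of pure maps, compression of same representation}, which then yields $\cst$-extremity.

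Case (1) requires a genuinely different argument, since there $\phi$ is typically not a direct sum of pure UCP maps (compare Example \ref{invertibility cannot be dropped}); here I would verify property \ref{Property P} by hand. Given a positive $D\in\bk$ with $P_\h(I_\G\otimes D)_{|_\h}$ invertible, the first step is to upgrade this to invertibility of $D$ itself: since the $\G_i$ are mutually orthogonal, $\h=\oplus_i(\G_i\otimes\K_i)$ and for a unit vector $v\in\G_i$ and $w\in\K_i$ one computes $\langle P_\h(I_\G\otimes D)(v\otimes w),v\otimes w\rangle=\langle Dw,w\rangle$, so the compression being bounded below by some $\beta>0$ forces $\langle Dw,w\rangle\geq\beta\|w\|^2$ for all $w\in\K_i$ and all $i$, whence $D\geq\beta I_\K$ because $\vee_i\K_i=\K$. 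The second step is the identification
\[\M_0:=\{T\in\bk:(I_\G\otimes T)(\h)\subseteq\h\}=\Alg\,\mathcal{E},\qquad \mathcal{E}=\{\K_i\}_{i\in\Lambda},\]
which follows from the equivalence $(I_\G\otimes T)(v\otimes w)\in\h\iff Tw\in\K_i$ for $v\in\G_i,\ w\in\K_i$ (using $\h\cap(\G_i\otimes\K)=\G_i\otimes\K_i$, again by orthogonality of the $\G_i$). As $\overline{\mathcal{E}}$ is countable and $\K$ is separable, Larson's theorem (Theorem \ref{Larson's factorization result}) gives that $\M_0$ has factorization in $\bk$; applying this to the positive invertible $D$ from the first step produces an invertible $S\in\M_0$ with $S^{-1}\in\M_0$ and $D=S^*S$, and then $(I_\G\otimes S)(\h)\subseteq\h$ with $(I_\G\otimes S)_{|_\h}$ invertible (with inverse $(I_\G\otimes S^{-1})_{|_\h}$), which is exactly property \ref{Property P}.

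I expect the main obstacle to be case (1): one must recognize that $\phi$ is not of direct-sum-of-pure type and that the algebra governing $\cst$-extremity is the nest algebra $\Alg\{\K_i\}$ attached to the $\K$-factors rather than anything built from the $\G_i$, and one must not overlook the small but essential point that property \ref{Property P} presupposes invertibility only of the compression $P_\h(I_\G\otimes D)_{|_\h}$, so the upgrade to invertibility of $D$ itself is needed before Larson's theorem can be applied. In case (2) this invertibility upgrade is already carried out inside the proof of Theorem \ref{direct sum of pure maps, compression of same representation}, which is why citing that theorem is the cleanest route there.
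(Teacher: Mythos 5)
Your proof is correct and follows essentially the same route as the paper's: part (2) is reduced to Theorem \ref{direct sum of pure maps, compression of same representation}, and part (1) is handled by upgrading invertibility of the compression $P_\h(I_\G\otimes D)_{|_\h}$ to invertibility of $D$ itself (using orthogonality of the $\G_i$ and density of $\bigcup_i\K_i$) and then invoking Larson's factorization theorem for the countable nest $\{\K_i\}$. The only cosmetic differences are that you phrase the last step of part (1) via the identification $\{T\in\bk:(I_\G\otimes T)\h\subseteq\h\}=\Alg\{\K_i\}$ where the paper instead writes down the inverse $\oplus_i I_{\G_i}\otimes(S_{|_{\K_i}})^{-1}$ explicitly, and that you add the (reasonable) step of merging indices with equal $\G_i$ in part (2), which the paper leaves implicit.
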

\begin{proof}
(1) Firstly it is easy to verify that $\K=\bigvee_{i\in\Lambda}\K_i$ (indeed, if $k\in\K\ominus\bigvee_{i\in\Lambda}\K_i$, then for any non-zero $g\in\G$, we will have $g\otimes k\perp \{(X\otimes I_\K)h; h\in\h, X\in\bg\}$, which will yield $g\otimes k=0$).

 Let $D\in\bk$ be a positive  operator such that $P_\h(I_\G\otimes D)_{|_{\h}}$ is invertible. We claim that $D$ is invertible.  Let $\beta>0$ be such that $P_\h(I_\G\otimes D)_{|_\h}\geq \beta I_\h$. Since  $g_ i \otimes k_ i \in\h$, for any $0\neq g_ i \in\G_ i $ and $k_ i \in \K_ i $, we get
\begin{equation*}
\begin{split}
    \|g_ i \|^2\langle Dk_ i ,k_ i \rangle&=\langle(I_\G\otimes D)(g_ i \otimes k_ i ),g_ i \otimes k_ i \rangle
    \geq \beta \;\langle g_ i \otimes k_ i ,g_ i \otimes k_ i \rangle
    =\beta\;\|g_ i \|^2\;\langle k_ i ,k_ i  \rangle,
\end{split}
\end{equation*}
which implies that $\langle D k_ i ,k_ i \rangle\geq\beta\langle k_ i ,k_ i \rangle$. Since $\bigcup_{ i \in\Lambda}\K_ i $ is dense in $\K$, we conclude that $\langle Dk,k\rangle\geq\beta\langle k,k\rangle$ for all $k\in\K$; hence $D$ is invertible.

Since the nest $\{\K_i\}_{i\in\Lambda}$  has a countable completion, by Theorem \ref{Larson's factorization result} there exists an invertible operator $S\in\bk$ satisfying $D=S^*S$ and  $S(\K_i)\subseteq\K_i$, $S^{-1}(\K_i)\subseteq\K_i$ for all $i\in\Lambda$. Clearly then $(I_\G\otimes S)(\h)\subseteq\h$.
Note that $(S^{-1})_{|_{\K_ i }}=(S_{|_{\K_ i }})^{-1}\in\mathcal{B}(\K_ i )$ for each $ i \in\Lambda$
and $\sup_{ i \in\Lambda}\|(S_{|_{\K_ i }})^{-1}\|=\|S^{-1}\|<\infty$.
Hence $\oplus_{ i \in\Lambda}I_{\G_ i }\otimes (S_{|_{\K_ i }})^{-1}$ is a bounded operator on $\h$ and
\begin{equation*}
\begin{split}
    (I_\G\otimes S)_{|_\h}(\oplus_{ i \in\Lambda}I_{\G_ i }\otimes (S_{|_{\K_ i }})^{-1})&= (\oplus_{ i \in\Lambda}I_{\G_ i }\otimes S_{|_{\K_ i }})(\oplus_{ i \in\Lambda}I_{\G_ i }\otimes (S_{|_{\K_ i }})^{-1})
    =\oplus_{ i \in\Lambda}I_{\G_ i }\otimes I_{\K_ i }=I_\h.
\end{split}
\end{equation*}
Similarly, $(\oplus_{ i \in\Lambda}I_{\G_ i }\otimes (S_{|_{\K_ i
}})^{-1})(I_\G\otimes S)_{|_\h}=I_\h$.   This proves that
$(I_\G\otimes S)_{|_\h}$ is invertible. Since $D\in\bk$ is
arbitrary, we have shown that $\h$ satisfies factorization property
\ref{Property P}.

(2) This assertion follows from Theorem \ref{direct sum of pure maps, compression of same representation}, as the map $\phi(X)=P_\h(X\otimes I_\K)_{|_\h}=\oplus_{i\in\Lambda}(P_{\G_i} X_{|_{\G_i}}\otimes I_{\K_i})$ from $\bg$ to $\bh$ satisfies the equivalent criteria for it to be $\cst$-extreme in $S_\h(\bg)$.
\end{proof}

At this point, we are not sure if we can write subspaces of Part (1) in Proposition \ref{direct sum of nest satisfies Property P} in the form of subspaces in Part (2), and vice versa.  However one can easily verify that if the concerned nests are already complete, then the two parts produce the same set of subspaces.

Before proving the main result of this section, we recall some  terminologies for the purpose. Let $\E$ be a complete  nest on a separable Hilbert space $\K$. For any $E\in\E$, define
\[E_-=\vee \{F\in\E; \; F\subsetneq E\}~ \mbox{ and }~ E_+=\wedge\{F\in\E;\; E\subsetneq F\}.\]
An {\em atom} of $\E$ is a subspace of the form $E\ominus E_-$, for some $E\in\E$ with $E\neq E_-$.  Clearly, any two atoms of $\E$ are orthogonal.  The nest $\E$ is called {\em atomic} if there is a countable collection of atoms $\{\K_n\}$ of  $\E$ such that $\K=\oplus_n \K_n$.

Now let $\M$ be a subalgebra of $\bk$ for some  Hilbert space $\K$. Then its {\em lattice} $\Lat\M$ is defined by
\[\Lat\M=\{E\subseteq \K;\; E \mbox{ is a subspace such that } T(E)\subseteq E\mbox{ for all } T\in\M\}.\]
Dually, for any collection $\E$ of subspaces of $\K$, consider the unital closed algebra $\Alg\E$ defined by
\[\Alg\E=\{T\in\bk;~ T(E)\subseteq E\;\mbox{ for all }~E\in\E\}.\]
It is clear that $\M\subseteq\Alg\Lat\M$. The subalgebra $\M$  is called {\em reflexive} if $\M=\Alg\Lat\M$. Any nest algebra is an example  of reflexive algebra. More generally, any algebra of the form $\Alg\E$ (for some collection $\E$ of subspaces) is reflexive. One can refer to \cite{Da} for more details.

We now mention a crucial result proved in \cite{BhMa} about reflexive algebras having factorization property to be applied below (see Corollary 2.11 and Lemma 4.3 in \cite{BhMa}): If $\M$ is a reflexive algebra having factorization in $\bk$ for some separable Hilbert space $\K$, then $\Lat\M$ is a complete atomic nest, and hence $\M$ is a nest algebra.

The following theorem is another major result of the article, which provides a necessary and sufficient criteria for a normal $\cst$-extreme map to be direct sum of normal pure UCP maps.

\begin{theorem}\label{normal C*-extreme maps and iff criteria}
 Let $\phi:\bg\to\bh$ be a normal $C^\ast$-extreme map with minimal Stinespring form $\phi(X)=V^*(X\otimes I_{\K})V,$ for some Hilbert space $\K$.
Then  $\phi$ is unitarily equivalent to a direct sum of normal pure UCP maps
 if and only if the algebra $\M=\{T\in\bk; (I_\G\otimes T)(V\h)\subseteq V\h\}$ is reflexive.
 \end{theorem}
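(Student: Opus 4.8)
The plan is to exploit the identification (already set up in the excerpt) between $C^\ast$-extremity of the normal map $\phi$ and factorization property \ref{Property P} of the subspace $\h \subseteq \G\otimes\K$, and to translate the question of whether $\phi$ decomposes as a direct sum of normal pure UCP maps into a purely lattice-theoretic statement about the algebra $\M = \{T\in\bk; (I_\G\otimes T)(V\h)\subseteq V\h\}$. The key observation is that $\M$ is exactly the algebra appearing in Corollary \ref{a factorization property of algebras coming out of C* extreme point} (after identifying $\rho(\A)'$ with $I_\G\otimes\bk$ and noting that the normal representation $X\mapsto X\otimes I_\K$ plays the role of $\pi$), so since $\phi$ is $C^\ast$-extreme, $\M$ \emph{has factorization in $\bk$}. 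Thus we are precisely in the setting of the result from \cite{BhMa} quoted just before the theorem: a reflexive algebra with factorization in $\bk$ (for $\K$ separable) must be a nest algebra whose lattice is a complete atomic nest.

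For the forward direction, suppose $\phi$ is unitarily equivalent to a direct sum of normal pure UCP maps. By Theorem \ref{Stinespring dilation for normal maps} and the remark that a normal UCP map is pure iff $\dim\K = 1$, each summand is a compression $X\mapsto W_i^* X W_i$ of the identity representation of $\bg$ on $\G$, so $\phi$ is unitarily equivalent to $\oplus_{i\in\Lambda}(P_{\G_i} X_{|_{\G_i}}\otimes I_{\K_i})$ for suitable subspaces $\G_i\subseteq\G$ and an orthogonal family $\{\K_i\}$ with $\K = \oplus_{i\in\Lambda}\K_i$; here $V\h = \bigvee_i \G_i\otimes\K_i$. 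One then computes directly that an operator $T = [T_{ij}]\in\bk$ satisfies $(I_\G\otimes T)(V\h)\subseteq V\h$ iff $\G_j\otimes T_{ij}(\K_j) \subseteq \G_i\otimes\K_i$ for all $i,j$, i.e. $T_{ij} = 0$ whenever $\G_j\not\subseteq\G_i$. This is manifestly an algebra of the form $\Alg\E$ for the collection $\E = \{\oplus_{j: \G_i\subseteq\G_j}\K_j\}$ (or more carefully the lattice it generates together with the $\vee$'s and $\wedge$'s), and every algebra of the form $\Alg\E$ is reflexive — this is stated in the excerpt. Hence $\M$ is reflexive.

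For the converse, suppose $\M$ is reflexive. Since $\phi$ is $C^\ast$-extreme, $\M$ has factorization in $\bk$ as noted above, so by the result of \cite{BhMa}, $\Lat\M$ is a complete atomic nest, say with atoms $\{\K_n\}_{n\ge 1}$ and $\K = \oplus_n \K_n$, and $\M = \Alg(\Lat\M)$ is the corresponding nest algebra. The remaining work is to read off the structure of the subspace $V\h$ from the equality $\M = \{T; (I_\G\otimes T)(V\h)\subseteq V\h\}$: for each atom $\K_n$ let $E_n\in\Lat\M$ be the element with $E_n \ominus (E_n)_- = \K_n$, and show that $V\h$ must be of the form $\bigvee_n \G_n\otimes\K_n$ where $\G_n\subseteq\G$ depends monotonically on the position of $\K_n$ in the nest; concretely $\G_n = \bigcap\{\text{ranges forced by } T\text{'s preserving }V\h\}$, and one checks the $\G_n$ are nested in the order reverse to the $E_n$. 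Then $\phi$ is unitarily equivalent to $\oplus_n (P_{\G_n}X_{|_{\G_n}}\otimes I_{\K_n})$, which is a direct sum of normal pure UCP maps (each $P_{\G_n}X_{|_{\G_n}}$ is pure normal). The verification that this direct-sum decomposition is genuinely forced — i.e. that $V\h$ respects the atomic decomposition of $\K$ rather than merely being invariant under $I_\G\otimes\M$ — is where one must use that $\M$ is the \emph{full} algebra $\{T; (I_\G\otimes T)(V\h)\subseteq V\h\}$ and not a proper subalgebra; combined with reflexivity this pins down $V\h$ up to the tensor-decomposed form.

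The main obstacle I expect is this last step of the converse: extracting, from the two facts ``$\M = \Alg(\text{complete atomic nest})$'' and ``$\M = \{T : (I_\G\otimes T)\text{ preserves } V\h\}$,'' the conclusion that $V\h$ is literally a direct sum $\bigvee_n \G_n\otimes\K_n$ over the atoms with $\{\G_n\}$ nested. One has to rule out ``twisted'' invariant subspaces of $\G\otimes\K$ that happen to have the same stabilizer algebra $\M$; the argument should run by testing $V\h$ against diagonal and off-diagonal matrix units of $\bk$ relative to the atomic decomposition (exactly the kind of entry-by-entry computation used in the proof of Theorem \ref{direct sum of pure maps, compression of same representation}), using that $P_{\K_m}\bk P_{\K_n}\subseteq\M$ precisely when $\K_m$ sits above $\K_n$ in the nest. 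Everything else — the forward direction, and the appeal to the $C^\ast$-extremity $\Rightarrow$ factorization implication and to the \cite{BhMa} structure theorem — is essentially bookkeeping on top of results already available in the excerpt.
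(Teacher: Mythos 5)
Your proposal is correct and follows essentially the same route as the paper: the forward direction computes $\M$ explicitly from $\h=\oplus_i\G_i\otimes\K_i$ and recognizes it as $\Alg$ of a subspace lattice, and the converse combines the factorization coming from $C^\ast$-extremity with the reflexivity hypothesis and the result of \cite{BhMa} to get a complete atomic nest, then decomposes $\h$ using operators supported on the atoms. The step you flag as the main obstacle is resolved in the paper exactly along the lines you indicate: choosing an orthonormal basis $\{e_n\}$ subordinate to the atoms, one checks that each rank-one projection $|e_n\rangle\langle e_n|$ lies in $\Alg\Lat\M=\M$, so $(I_\G\otimes|e_n\rangle\langle e_n|)\h\subseteq\h$ splits every $h=\sum_n g_n\otimes e_n$ into components $g_n\otimes e_n\in\h$, giving $\h=\oplus_n\G_n\otimes e_n$ with $\G_n=\{g;\,g\otimes e_n\in\h\}$ and hence the desired direct sum of normal pure maps.
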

\begin{proof}
By identifying the Hilbert space $\h$ with $V\h$, we assume  that
$\h$ is a subspace of $\G\otimes\K$, so that $\phi(X)=P_\h(X\otimes
I_\K)_{|_{\h}}$ for $X\in\bg$ and $\M=\{T\in\bk; (I_\G\otimes
T)\h\subseteq\h\}$.

First we assume that the algebra $\M$ is reflexive. Since $\phi$ is $\cst$-extreme in $S_\h(\bg)$, it follows from Corollary \ref{a factorization property of algebras coming out of C* extreme point} that $I_\G\otimes\M$ has factorization in $I_\G\otimes\bk$, which is to say that $\M$ has factorization in $\bk$. Since $\K$ is separable, it then follows from the result from Corollary 2.11 in \cite{BhMa} as mentioned above,  that $\Lat\M$ is an atomic nest.
Therefore by definition of atomic nests, there exists an orthonormal basis $\{e_n\}_{n\geq 1}$ of $\K$ such that each $e_n$ is contained in  one of the atoms of $\Lat\M$. Now for all $n\geq 1$, consider the subspace $\G_n$ of $\G$ given by
\[\G_n=\{g\in\G; g\otimes e_n\in\h\}.\]
We claim that
\begin{equation}\label{eq:expression for H}
    \h=\bigoplus_{n\geq1}(\G_n\otimes e_n).
\end{equation}
Clearly, $\G_n\otimes e_n\subseteq\h$ for all $n\geq1$; hence
$\oplus_{n\geq1}(\G_n\otimes e_n) \subseteq\h$. Conversely, let
$h\in\h$. Then as $\{e_n\}_{n\geq 1}$ is an orthonormal basis of
$\K $ and $h\in\G\otimes\K$, there exists a sequence $\{g_n\}_{n\geq1}$ of vectors in $\G$
such that
\[h=\sum_{n\geq1}g_n\otimes e_n.\]
Now for any unit vector $e\in\K$,  we denote by  $|e\rangle\langle
e|$  the rank one  projection on $\K$ defined by
 \[|e\rangle\langle e|(k)=e\langle e,k\rangle~~\mbox{ for all }k\in\K.\]
Then we note for all $n\geq1$ that $|e_n\rangle\langle e_n|\in \Alg\Lat\M$  (indeed, if $E\ominus E_{-}$ is an atom of $\Lat\M$ and $e\in E\ominus E_{-}$ is a unit vector, then $|e\rangle\langle e|(F)=0\subseteq F$ for $F\subseteq E_{-}$, and $|e\rangle\langle e|(F)=\C\cdot e\subseteq F$ for $F\supseteq E$). Since $\M$ is reflexive, it then follows  that $|e_n\rangle \langle e_n|\in\M$; hence  $(I_\G\otimes |e_n\rangle \langle e_n|)\h\subseteq\h$, which implies 
\[(I_\G\otimes |e_n\rangle \langle e_n|)h=g_n\otimes e_n\in\h.\]
In particular, $g_n\in\G_n$ and hence $g_n\otimes e_n\in \G_n\otimes
e_n$.  This shows that $h=\sum_{n\geq1}g_n\otimes
e_n\in\oplus_{n\geq1}\G_n\otimes e_n$. Since $h\in\h$ is arbitrary,
we conclude our claim that $\h=\oplus_{n\geq 1}(\G_n\otimes e_n)$.
Now for each $n\geq 1$, define the map $\phi_n:\bg\to \B(\G_n)$ by
\[\phi_n(X)=P_{\G_n}X_{|_{\G_n}}, ~~\mbox{ for all }~X\in\bg.\]
Note that $\G_n$ can be a zero subspace, in which case we ignore the map $\phi_n$.
Then it is clear that $\phi_n$ is a normal pure UCP map, and  for all $X\in\bg$ we have
\[\phi(X)=P_{\h}(X\otimes I_\K)_{|_\h}=\sum_{n\geq1}P_{\G_n}X_{|_{\G_n}}\otimes
 |e_n\rangle\langle e_n|=\bigoplus_{n\geq 1}\phi_n(X)\otimes |e_n\rangle\langle e_n|.\]
This proves the required assertion that $\phi$ is unitarily equivalent to a direct sum of normal pure UCP maps $\phi_n$.

To prove the converse,   let $\phi$ be a direct sum of normal pure
UCP maps. Then for some countable indexing set $J,$ there is a
collection  $\{\G_i \}_{ i \in J}$ of  distinct subspaces of $\G$
and a collection $\{\K_ i \}_{ i \in J}$ of mutually orthogonal
subspaces of $\K$  such that $\h=\oplus_{ i \in J} (\G_ i \otimes\K_
i )$. Since $\phi$ is $\cst$-extreme in $S_\h(\bg)$, the collection
$\{\G_ i \}_{ i \in J}$ is a nest by Theorem \ref{direct sum of pure
maps, compression of same representation}. This nest induces an
order on $ J$ making it a totally ordered set. If we set $\L_ i
=\oplus_{j\geq i }\K_j$ for $i\in J$, then $\{\L_i \}_{ i \in J}$ is
a nest, and it is easy to verify that $\M=\{T\in\bk; (I_\G\otimes
T)(\h)\subseteq\h\}=\Alg\{\L_ i ; i\in J\}$ (to show this, one can
imitate the same argument as in  \eqref{eq:M=AlgL*} in the proof of
Theorem \ref{direct sum of pure maps, compression of same
representation}). Thus we conclude that $\M$ is reflexive.
\end{proof}

It is a known fact due to Juschenko \cite{Ju} that any subalgebra having factorization in the finite dimensional matrix algebra $M_n$ is a nest algebra, and hence is automatically reflexive (also see  \cite{BhMa} for an alternative proof).  Thus the following corollary is immediate from Theorem \ref{normal C*-extreme maps and iff criteria} and Theorem \ref{direct sum of pure maps, compression of same representation}.

\begin{corollary}\label{when K is finite dimensional}
Let $\h$ be a subspace of $\G\otimes \K$, where $\K$ is a finite dimensional Hilbert space, such that the normal UCP map $\phi:\bg\to\bh$ given by $\phi(X)=P_\h(X\otimes I_\K)_{|_{\h}}$, for $X\in\bg$, is in minimal Stinespring form. Then $\phi$ is $\cst$-extreme  in $S_\h(\bg)$ if and only if $\phi$ is unitarily equivalent to a direct sum of a finite sequence of normal pure UCP maps $\{\phi_i\}_{i=1}^n$ such that $\phi_i$ is a compression of $\phi_{i+1}$.
\end{corollary}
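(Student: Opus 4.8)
The plan is to assemble the statement from three ingredients: Juschenko's theorem (any subalgebra of $M_n$ having factorization is a nest algebra, in particular reflexive), Theorem \ref{normal C*-extreme maps and iff criteria}, and Theorem \ref{direct sum of pure maps, compression of same representation}. Throughout one uses that $\bg$ is a type $I$ factor, so its identity representation $\id_{\bg}$ is irreducible and every normal pure UCP map on $\bg$ is a compression of $\id_{\bg}$; this is what allows Theorem \ref{direct sum of pure maps, compression of same representation} to be applied with the single fixed representation $\pi=\id_{\bg}$ on $\hpi=\G$.

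For the forward implication, assume $\phi$ is $\cst$-extreme in $S_\h(\bg)$. By Corollary \ref{a factorization property of algebras coming out of C* extreme point}, $I_\G\otimes\M$ has factorization in $I_\G\otimes\bk$, i.e. $\M=\{T\in\bk;\ (I_\G\otimes T)(\h)\subseteq\h\}$ has factorization in $\bk\cong M_n$ with $n=\dim\K$. By Juschenko's theorem $\M$ is a nest algebra, hence reflexive, so Theorem \ref{normal C*-extreme maps and iff criteria} applies and $\phi$ is unitarily equivalent to a direct sum of normal pure UCP maps. Since $\dim\K<\infty$, this direct sum is finite; grouping its summands into unitary equivalence classes (for these compressions of $\id_\bg$, two summands are unitarily equivalent precisely when the underlying subspaces of $\G$ coincide, since unitary equivalence is the same as mutual compression) writes $\phi$, up to unitary equivalence, as $\oplus_{j=1}^m \psi^j\otimes I_{\C^{k_j}}$ with $\psi^j$ pairwise non-unitarily-equivalent normal pure UCP maps having minimal Stinespring triple $(\id_\bg,V^j,\G)$. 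As $\phi$ is $\cst$-extreme, Theorem \ref{direct sum of pure maps, compression of same representation} forces $\{\ran(V^j)\}_{j=1}^m$ to be a nest; relabelling so that $\ran(V^1)\subsetneq\cdots\subsetneq\ran(V^m)$ makes $\psi^j$ a compression of $\psi^{j+1}$. Listing the $k_j$ identical copies of $\psi^j$ consecutively, for $j=1,\dots,m$, then presents $\phi$ as a finite direct sum $\oplus_{i=1}^n\phi_i$ of normal pure UCP maps in which $\phi_i$ is a compression of $\phi_{i+1}$ (copies of a single $\psi^j$ being trivially mutual compressions).

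For the converse, suppose $\phi\cong\oplus_{i=1}^n\phi_i$ with each $\phi_i(X)=V_i^*XV_i$ a normal pure UCP map and $\phi_i$ a compression of $\phi_{i+1}$, which means $\ran(V_i)\subseteq\ran(V_{i+1})$; hence $\{\ran(V_i)\}_{i=1}^n$ is a chain, so a nest in $\G$. Grouping equal-range summands again exhibits $\phi$, up to unitary equivalence, as $\oplus_{j=1}^m\psi^j\otimes I_{\C^{k_j}}$ with $\ran(V^1)\subsetneq\cdots\subsetneq\ran(V^m)$. Condition (1) of Theorem \ref{direct sum of pure maps, compression of same representation} holds since $\{\ran(V^j)\}_{j=1}^m$ is a nest, and condition (2) holds trivially: $\L_j=\C^{k_1+\cdots+k_j}$ yields a finite — hence complete and countable — nest $\{\L_j\}_{j=1}^m$. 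Theorem \ref{direct sum of pure maps, compression of same representation} therefore gives that $\oplus_{j=1}^m\psi^j\otimes I_{\C^{k_j}}$, and so $\phi$, is $\cst$-extreme in $S_\h(\bg)$.

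The only slightly delicate point is bookkeeping: Theorems \ref{normal C*-extreme maps and iff criteria} and \ref{direct sum of pure maps, compression of same representation} are phrased in terms of decompositions that bundle together unitarily equivalent pure pieces with a multiplicity space, whereas the corollary asks for a bare finite list $\phi_1,\dots,\phi_n$ with consecutive entries related by compression. Translating between the two formats, and observing that finite-dimensionality of $\K$ makes all the nests involved finite (so that the ``countable completion'' hypotheses are automatic), is essentially all there is to verify; the rest is direct citation.
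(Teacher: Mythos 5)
Your proposal is correct and follows essentially the same route as the paper: the paper's own (very brief) proof likewise invokes Juschenko's factorization result to get reflexivity of $\M$, then cites Theorem \ref{normal C*-extreme maps and iff criteria} and Theorem \ref{direct sum of pure maps, compression of same representation}. You have merely spelled out the bookkeeping (grouping unitarily equivalent pure summands into multiplicity spaces and back) that the paper leaves implicit, and that bookkeeping is carried out correctly.
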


Corollary \ref{when K is finite dimensional}  was proved for the
case when the UCP map is from $M_n$ to $M_r$ for some $n,r\in\N$
(Theorem 4.1, \cite{FaMo}) through rather tedious matrix
computations. Here we have provided a more conceptual approach using
nest algebra theory.

This Corollary suggests that  perhaps the algebra $\M$ in Theorem
\ref{normal C*-extreme maps and iff criteria} is always reflexive
when $\phi$ is $\cst$-extreme. But we are not able to prove it. If
this turns out to be true, then Theorem \ref{normal C*-extreme maps
and iff criteria} along with Theorem \ref{direct sum of pure maps,
compression of same representation} would characterize all normal
$\cst$-extreme maps on $\bg$. Thus we propose the following
conjecture:

\begin{conjecture}
Every normal $\cst$-extreme map on a type $I$ factor is a direct sum
of normal pure UCP maps.
\end{conjecture}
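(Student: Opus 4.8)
The plan is to use Theorem \ref{normal C*-extreme maps and iff criteria} to reduce the conjecture to a purely operator-algebraic statement. Write $\phi(X)=V^*(X\otimes I_\K)V$ for the minimal Stinespring form of a normal $\cst$-extreme map $\phi:\bg\to\bh$, and identify $\h$ with $V\h\subseteq\G\otimes\K$; by that theorem it suffices to prove that $\M=\{T\in\bk;\ (I_\G\otimes T)(\h)\subseteq\h\}$ is reflexive. Two facts are already in hand: $\M$ is a weak-$\ast$ closed unital subalgebra of $\bk$ which \emph{has factorization} in $\bk$ (apply Corollary \ref{a factorization property of algebras coming out of C* extreme point} to the representation $X\mapsto X\otimes I_\K$, exactly as in the proof of Theorem \ref{normal C*-extreme maps and iff criteria}), and the case $\dim\K<\infty$ is already settled (Corollary \ref{when K is finite dimensional}); so only $\dim\K=\infty$, with $\K$ separable, remains. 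Hence the conjecture would follow from the assertion that every such algebra $\M$ having factorization in $\bk$ is reflexive: once that is known, $\Lat\M$ is a complete atomic nest by the result of \cite{BhMa} recalled above, and the explicit construction of the subspaces $\G_n$ and the decomposition $\h=\bigoplus_{n\ge1}(\G_n\otimes e_n)$ carried out in the proof of Theorem \ref{normal C*-extreme maps and iff criteria} exhibits $\phi$ as a direct sum of normal pure UCP maps.

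To prove reflexivity of $\M$ I would proceed in three steps. First, show that $\Lat\M$ is a nest. Here I would adapt the obstruction argument used for the family $\{\ran(V_i)\}$ in the proof of Theorem \ref{direct sum of pure maps, compression of same representation}: if two subspaces of $\Lat\M$ were incomparable, then the joint invariance of both under $\M$ would constrain the block structure of the elements of $\M$ enough that some positive invertible $D\in\bk$, built with a non-zero off-triangular corner as the operator $D$ in \eqref{expression for tilde D}, could not be written $D=S^*S$ with $S\in\M$, contradicting factorization of $\M$. Unlike in that proof, the ambient datum here is the single subspace $\h$ rather than a prescribed direct-sum decomposition, so the argument must be run keeping track of $\h$ and of the minimality condition $\G\otimes\K=\ospan\{(X\otimes I_\K)h;\ h\in\h,\ X\in\bg\}$, not just of $\M$ in isolation. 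Second, once $\Lat\M$ is a nest $\E$, the inclusion $\M\subseteq\Alg\E$ forces $\Alg\E$ to inherit factorization in $\bk$ from $\M$, so Larson's Theorem \ref{Larson's factorization result} makes the completion of $\E$ countable; a countable complete nest is atomic, say $\K=\bigoplus_n\K_n$, where the $\K_n$ are the atoms of $\E$.

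The third step, and the decisive one, is to prove $\M=\Alg\E$. For an atomic nest $\Alg\E$ is the weak-$\ast$ closed span of the rank-one operators $|e\rangle\langle f|$ with $e,f$ in atoms and compatible with the order of $\E$, so it would suffice to show every such operator lies in $\M$. I would try to extract these from factorization by applying it to operators $I_\K+R$, where $R$ is a small rank-one perturbation supported on a lower-triangular corner (so that $I_\K+R$ is positive and invertible), and analysing the positive square root $S$ together with the constraints $S\in\M$ and $S^{-1}\in\M$; in finite dimensions this is exactly how the statement is proved (Juschenko's theorem, used through \cite{BhMa} in the proof of Corollary \ref{when K is finite dimensional}). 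I expect this last step to be the real obstacle: it is precisely the infinite-dimensional form of the assertion that a subalgebra of $\bk$ with factorization must be a nest algebra, and it is the point at which the authors stop. The pathology to be excluded is a proper weak-$\ast$ closed subalgebra $\M\subsetneq\Alg\E$ with $\Lat\M=\E$ that still has factorization in $\bk$; ruling out that such an $\M$ can be the invariance algebra of a subspace $\h$ satisfying the minimality condition would presumably require combining the separability of $\K$, the structure theorem for normal UCP maps (Corollary \ref{expression for normal map as a sum}), and the uniqueness of the minimal Stinespring dilation, and it is not clear that the factorization property of $\M$ alone, without further geometric input from $\h$, is enough to force $\M=\Alg\E$.
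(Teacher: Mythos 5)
First, note that the statement you are addressing is posed in the paper as an open \emph{conjecture}: immediately before stating it the authors write that they are not able to prove that the algebra $\M$ of Theorem \ref{normal C*-extreme maps and iff criteria} is always reflexive. There is therefore no proof in the paper to compare against, and your opening reduction --- that it suffices to show $\M=\{T\in\bk;\ (I_\G\otimes T)(V\h)\subseteq V\h\}$ is reflexive, after which the decomposition $\h=\bigoplus_n(\G_n\otimes e_n)$ from the proof of Theorem \ref{normal C*-extreme maps and iff criteria} finishes the job --- is exactly the reduction the authors themselves have already made by combining that theorem with Corollary \ref{a factorization property of algebras coming out of C* extreme point}. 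Up to that point you have restated the problem, not advanced it.

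The genuine gap is your third step, and you have in effect conceded it yourself. What you need there is: a weak-$\ast$ closed unital subalgebra $\M$ of $\bk$ having factorization, whose lattice is a complete countable atomic nest $\E$, must equal $\Alg\E$. This is precisely the infinite-dimensional analogue of Juschenko's theorem \cite{Ju}, and it is not available: the result from \cite{BhMa} that the paper quotes takes reflexivity as a \emph{hypothesis} (reflexive plus factorization implies nest algebra), so invoking it here would be circular, while the finite-dimensional rank-one perturbation argument you propose (factorizing $I_\K+R$) does not obviously survive the passage to infinite dimensions --- the factorization only constrains $S$ and $S^{-1}$ to lie in $\M$, it does not force them to be triangular-plus-finite-rank, and one cannot rule out a priori a proper weak-$\ast$ closed subalgebra $\M\subsetneq\Alg\E$ with $\Lat\M=\E$ that still factorizes every positive invertible operator. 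Your closing admission that it is not clear the factorization property alone suffices, and that further geometric input from $\h$ and the minimality condition would be needed, is an accurate self-diagnosis: that missing implication \emph{is} the conjecture. As written, your submission is a correct reduction together with a plausible research plan (steps one and two are reasonable, and Larson's Theorem \ref{Larson's factorization result} plus the atomicity of countable complete nests do what you claim), but it is not a proof.
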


\section{A Krein-Milman type theorem}\label{section:Krein-Milman theorem}

The Krein-Milman theorem is a very important result in classical functional analysis, which says that in a locally convex topological vector space, a convex compact subset  is closure of the convex hull of its extreme points. So it is desired to have an analogue of Krein-Milman  theorem for $\cst$-convexity in the space $\sha$ equipped with an appropriate topology.  We equip the set $\sha$  with bounded weak (BW) topology. Convergence in BW-topology is given by: a net $\phi_i$ converges to $\phi$ in $\sha$ if $\phi_i(a)\to \phi(a)$ in weak operator topology (WOT) for all $a\in\A$. It is known that $\sha$ is a compact set with respect to BW-topology. See \cite{Ar69, Pa} for more details on this topology.

So a generalized Krein-Milman theorem for $\sha$ would be to ask whether $\sha$ is the closure of the $\cst$-convex hull of its $\cst$-extreme points in BW-topology.  Here the {\em $\cst$-convex hull} of any subset $K$ of $\sha$ is given by \begin{equation}
    \left\{\sum_{i=1}^nT_i^*\phi_i(\cdot)T_i;\; \phi_i\in K, T_i\in\bh\mbox{ with }\sum_{i=1}^nT_i^*T_i=I_\h\right\}.
\end{equation}
The goal of this section is to prove a Krein-Milman type theorem for  $\sha$,
whenever $\A$ is a separable $\cst$-algebra or $\A$ is of the form
$\bg$ for some Hilbert space $\G$. The proof of these two cases are
different. Note that $\bg$ is not separable, when $\G$ is infinite
dimensional. As yet we do not know the result in full generality
(i.e. for non-separable $\cst$-algebras). We recall here  that as
mentioned before such a theorem can be found in \cite{FaMo} for
$\sha$, when $\h$ is a finite dimensional Hilbert space and $\A$ is
an arbitrary $\cst$-algebra, and in \cite{BaBhMa} for general $\h$
and commutative $C^*$-algebra $\A .$ Thus our result provides an important development towards this theorem in infinite dimensional  Hilbert space settings.

\begin{lemma}\label{sum of pure maps is in C*-convex hull}
Let $\phi\in\sha$ be such  that $\phi(a)=\sum_{n\geq1}\phi_n(a)$ in
WOT, for all $a\in\A$, where $\{\phi_n:\A\to\bh\}_{n\geq1}$ is a
countable family of pure completely positive maps. Then $\phi$ is in
the BW-closure of $\cst$-convex hull of $\cst$-extreme points of
$\sha$.
\end{lemma}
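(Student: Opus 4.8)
The plan is to realize $\phi$ as a BW-limit of finite $\cst$-convex combinations of $\cst$-extreme points assembled from the pieces $\phi_n$. Write $Q_n=\phi_n(1)\ge 0$, so that $\sum_{n\ge 1}Q_n=I_\h$ in SOT, and set $Q^{(N)}=\sum_{n=1}^N Q_n$, an increasing sequence of positive contractions with $Q^{(N)}\uparrow I_\h$. The crucial point is the claim that for each $n$ there is a $\cst$-extreme point $\widetilde\rho_n\in\sha$ with $\phi_n(\cdot)=Q_n^{1/2}\,\widetilde\rho_n(\cdot)\,Q_n^{1/2}$. If every $Q_n$ were invertible this would be immediate by taking $\widetilde\rho_n=Q_n^{-1/2}\phi_n(\cdot)Q_n^{-1/2}$ (which is UCP, and pure since its minimal Stinespring representation is still the irreducible representation of $\phi_n$); the whole difficulty is the non-invertible case.

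Granting the claim, fix a $\cst$-extreme point $\rho_0\in\sha$ (for instance $\rho_0=\widetilde\rho_1$, after reindexing so that $Q_1\ne 0$) and put, for each $N$,
\[
\Theta_N(\cdot)=\sum_{n=1}^N Q_n^{1/2}\,\widetilde\rho_n(\cdot)\,Q_n^{1/2}+(I_\h-Q^{(N)})^{1/2}\,\rho_0(\cdot)\,(I_\h-Q^{(N)})^{1/2}.
\]
Since the squares of the coefficients $Q_1^{1/2},\dots,Q_N^{1/2},(I_\h-Q^{(N)})^{1/2}$ sum to $I_\h$, each $\Theta_N$ is a $\cst$-convex combination of the $\cst$-extreme points $\widetilde\rho_1,\dots,\widetilde\rho_N,\rho_0$, hence lies in the $\cst$-convex hull of the $\cst$-extreme points of $\sha$. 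Moreover $\Theta_N(a)=\sum_{n=1}^N\phi_n(a)+(I_\h-Q^{(N)})^{1/2}\rho_0(a)(I_\h-Q^{(N)})^{1/2}$: the first summand converges to $\phi(a)$ in WOT by hypothesis, and $(I_\h-Q^{(N)})^{1/2}\to 0$ in SOT because $\langle (I_\h-Q^{(N)})x,x\rangle\to 0$, so the second summand converges to $0$ in WOT. Hence $\Theta_N\to\phi$ in BW-topology, which proves the lemma.

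It remains to establish the claim. Fix $n$ with $Q_n\ne 0$ and let $(\pi_n,V_n,\h_{\pi_n})$ be the minimal Stinespring triple of $\phi_n$, so $\pi_n$ is irreducible and $V_n^*V_n=Q_n$. Let $V_n=W_nQ_n^{1/2}$ be the polar decomposition; then $W_n$ is a partial isometry with initial space $E_n:=\overline{\ran(Q_n)}$, and $\phi_n(a)=Q_n^{1/2}(W_n^*\pi_n(a)W_n)Q_n^{1/2}$. Regarded as a map into $\B(E_n)$, $\rho_n'(a):=W_n^*\pi_n(a)W_n$ is unital and, since $[\pi_n(\A)\,\overline{\ran(V_n)}]=\h_{\pi_n}$, has minimal Stinespring triple $(\pi_n,W_n|_{E_n},\h_{\pi_n})$; as $\pi_n$ is irreducible, $\rho_n'$ is a pure UCP map. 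We extend $\rho_n'$ to a $\cst$-extreme map $\widetilde\rho_n\in\sha$: writing $\h=E_n\oplus E_n^\perp$, take $\widetilde\rho_n=\rho_n'\oplus\sigma_n$ block-diagonally, where $\sigma_n:\A\to\B(E_n^\perp)$ is $\rho_n'\otimes I_{\ell^2}$ when $\dim E_n^\perp$ is infinite, a compression of $\pi_n$ to a subspace of $\overline{\ran(V_n)}$ of dimension $\dim E_n^\perp$ when $\dim E_n^\perp$ is finite and nonzero, and $\sigma_n$ is omitted when $E_n^\perp=0$. Since $\ran(Q_n^{1/2})\subseteq E_n$, this choice does not alter the identity $\phi_n(\cdot)=Q_n^{1/2}\widetilde\rho_n(\cdot)Q_n^{1/2}$. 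In every case $\widetilde\rho_n$ is unitarily equivalent to a finite or countable direct sum of pure UCP maps, all compressions of the single irreducible representation $\pi_n$, whose ranges form a nest and whose associated multiplicity nest has countable completion; thus $\widetilde\rho_n$ is $\cst$-extreme by Theorem \ref{direct sum of pure UCP maps} (or Corollary \ref{direct sum of pure maps indexed by natural numbers give C*-extreme point}), and the claim follows.

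The step I expect to be the main obstacle is precisely this last extension: non-invertibility of $Q_n=\phi_n(1)$ is what forces one to pass through the polar decomposition and then pad the pure UCP ``corner'' $\rho_n'$ living on $E_n$ up to a genuinely $\cst$-extreme element of $\sha$, which is exactly where the direct-sum characterization of Section \ref{section:direct sum of pure UCP maps} is indispensable. (Throughout, $\h$ is infinite dimensional and separable, as in the standing hypotheses of this section; for finite dimensional $\h$ the statement is already contained in \cite{FaMo}.)
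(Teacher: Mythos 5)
Your proof is correct and follows essentially the same route as the paper's: polar-decompose $V_n=W_nQ_n^{1/2}$, extend the resulting pure corner $W_n^*\pi_n(\cdot)W_n$ on $E_n=\overline{\ran(Q_n)}$ to a $C^*$-extreme element of $\sha$ via the direct-sum characterization of Section \ref{section:direct sum of pure UCP maps}, and approximate $\phi$ in BW-topology by the finite $C^*$-convex combinations $\Theta_N$ with remainder coefficient $(I_\h-Q^{(N)})^{1/2}$. The only real difference is the padding on $E_n^\perp$: the paper uses a pure \emph{state} (a rank-one compression of $\pi_n$) tensored with $I_{E_n^\perp}$, which handles all cases uniformly and avoids both your case split on $\dim E_n^\perp$ and the implicit requirement $\dim\overline{\ran(V_n)}\ge\dim E_n^\perp$ (hence the restriction to infinite-dimensional $\h$) in your finite subcase.
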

\begin{proof}
We assume that the collection $\{\phi_n\}_{n\geq1}$  in the sum of $\phi$  is countably infinite. The finite case follows similarly and easily. For each $n\geq 1$, let $(\pi_n,V_n,\K_n)$ be the minimal Stinespring triple for $\phi_n$. Then each $\pi_n$ is irreducible, as $\phi_n$ is pure by hypothesis. Note that
\begin{equation*}
    \sum_{n\geq 1}V_n^*V_n=\sum_{n\geq1}\phi_n(1)=\phi(1)=I_\h, ~~\mbox{ in WOT}.
\end{equation*}
Set $A_n=V_n^*V_n\in\bh$, and  let $V_n=W_n A_n^{1/2}$ be the polar decomposition of $V_n$. Here $W_n\in\mathcal{B}(\h,\K_n)$ is the partial isometry with initial space $\overline{\ran(A_n^{1/2})}$ and final space $\overline{\ran(V_n)}$. Define the map $\zeta_n:\A\to\bh$ by \begin{equation*}
    \zeta_n(a)=W_n^*\pi_n(a)W_n~~\mbox{ for all }~a\in\A.
\end{equation*}
It is immediate to verify that $\zeta_n$ is a completely positive map with the minimal Stinespring triple $(\pi_n, W_n, \K_n)$.
Let $\theta_n:\A\to\C$ be a pure state that is a compression of $\zeta_n$ (e.g. take a unit vector $e_n\in\ran(W_n)$ and define $\theta_n(a)=\langle e_n,\pi_n(a)e_n\rangle$ for all $a\in\A$). Now we define $\xi_n:\A\to\bh$ by \begin{equation*}
    \xi_n=\zeta_n+(1-P_n)\theta_n,
\end{equation*}
where $P_n=W_n^*W_n$ is the projection from $\h$ onto $\overline{\ran(A_n^{1/2})}$. Note that $\xi_n$ is a UCP map from $\A$ to $\bh$. If we set $U_n={W_n}_{|_{\ran(P_n)}}$ (so that $U_n$ is an isometry from $\ran(P_n)$ to $\K_n$),
then it is straightforward to verify that $\xi_n$ is unitarily equivalent to  the UCP map $\widetilde{\xi}_n:\A\to \B(\ran(P_n)\oplus\ran(P_n^\perp))$ given by
\[\widetilde{\xi}_n(a)= U_n^*\pi_n(a)U_n\oplus \theta_n(a)I_{\ran(P_n^\perp)}, ~~\mbox{ for all }~a\in\A.\]
Since $\theta_n$ is a compression of the map $a\mapsto U_n^*\pi_n(a)U_n$ (which is pure, as $\pi_n$ is irreducible), it follows
from Theorem \ref{direct sum of pure maps, compression of same representation} that $\widetilde{\xi}_n$ is  $C^*$-extreme in $S_{\ran(P_n)\oplus\ran(P_n^\perp)}(\A)$; hence $\xi_n$ is $\cst$-extreme in $\sha$.

 Now set $B_n=I_\h-\sum_{j=1}^nA_j$. Since $\sum_{n\geq 1}A_n=\sum_{n\geq1}V_n^*V_n=I_\h$ in WOT;
it follows that $B_n\geq0$, and  $B_n\to 0$ in WOT as $n\to\infty$.
Now fix a $C^*$-extreme point $\xi$ in $\sha$ and define the map $\psi_n:\A\to\bh$ by
\begin{equation*}
\psi_n(a)=B_n^{1/2}\xi(a)B_n^{1/2}+\sum_{j=1}^nA_j^{1/2}\xi_j(a)A_j^{1/2},~~\mbox{ for all }~a\in\A.
\end{equation*}
It is clear  that each $\psi_n$  is a UCP map such that $\psi_n$ is a $C^*$-convex combination of $C^*$-extreme points of $\sha$.
Since $B_n\to0$ in WOT, it follows that $B_n^{1/2}\to0$ in SOT; hence $B_n^{1/2}\xi(a)B_n^{1/2}\to 0$ in WOT for all $a\in\A$. This implies that
\[\lim_{n\to\infty}\psi_n(a)=\sum_{j=1}^\infty A_j^{1/2}\xi_j(a)A_j^{1/2}~~\mbox{ in WOT, for all  } a\in\A.\]
Note that $A_j^{1/2}(I-P_j)=0$ for all $j$. Hence for all $a\in\A$, we get $A_j^{1/2}\xi_j(a)A_j^{1/2}=A_j^{1/2}\zeta_j(a)A_j^{1/2}$, which further yields in WOT convergence
\begin{equation*}
    \lim_{n\to\infty}\psi_n(a)=\sum_{j=1}^\infty A_j^{1/2}\zeta_j(a)A_j^{1/2}=\sum_{j=1}^\infty A_j^{1/2}W_j^*\pi_j(a)W_jA_j^{1/2}=\sum_{j=1}^\infty V_j^*\pi_j(a)V_j=\sum_{j=1}^\infty\phi_j(a)=\phi(a).
\end{equation*}
In other words, $\psi_n\to\phi$ in BW-topology. Thus we have approximated $\phi$ in BW-topology by a sequence $\psi_n$ belonging to the $\cst$convex hull of $\cst$-extreme points of $\sha$.
\end{proof}


The following proposition seems to be a well-known result. However, we could trace the proof only when $\h$ is a finite dimensional Hilbert space. So we outline a proof for the sake of completeness.

\begin{proposition}\label{UCP maps are approximated by normal UCP maps}
Let $\A$ be a von Neumann algebra, and let $\phi:\A\to\bh$ be a UCP
map.  Then there exists a sequence $\phi_n:\A\to\bh$ of normal UCP
maps such that $\phi_n(a)\to\phi(a)$ in SOT for all $a\in \A$. In
particular, the set  $NS_\h(\A)$ of normal generalized states is
dense in the set $S_\h(\A)$ of all generalized states in
BW-topology.
\end{proposition}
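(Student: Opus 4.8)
The plan is to reduce the statement to the case of a finite--dimensional target, where it becomes the classical density of normal states, and then to reassemble a single approximating sequence. Write $\A\subseteq\bg$ for the ambient von Neumann algebra and fix once and for all a normal state $\mu$ on $\A$ (one always exists, e.g. a vector state). \emph{Reduction to finite rank.} Fix an orthonormal basis $(e_k)_{k\ge1}$ of $\h$, let $P_m$ be the projection onto $\Span\{e_1,\dots,e_m\}$, so $P_m\to I_\h$ in SOT, and put
\[
\psi_m(a)=P_m\,\phi(a)\,P_m+\mu(a)\,(I_\h-P_m),\qquad a\in\A .
\]
Then each $\psi_m$ is UCP, and since $P_m\phi(a)P_m\to\phi(a)$ and $\mu(a)(I_\h-P_m)\to0$ in SOT for every $a$, one has $\psi_m(a)\to\phi(a)$ in SOT. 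As $\mu$ is already normal, it now suffices to approximate, for each fixed $m$, the UCP map $\tau_m\colon\A\to\B(P_m\h)\cong M_m$, $\tau_m(a)=P_m\phi(a)P_m$, by normal CP maps into $M_m$; on $M_m$ all relevant topologies coincide, so ``approximate in BW'' means ``approximate pointwise in norm.''

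\emph{The finite--dimensional case.} Here I would use the standard affine, BW--to--weak$^*$ homeomorphic correspondence between completely positive maps $\theta\colon\A\to M_m$ and positive linear functionals on the von Neumann algebra $\A\,\bar\otimes\,M_m$, under which $\theta$ is normal exactly when the associated functional is normal, and under which $\tau_m$ corresponds to a \emph{state}. Since the normal states of a von Neumann algebra are weak$^*$ dense in its state space, $\tau_m$ is a BW--limit of a net $(\theta_\lambda)$ of normal CP maps $\A\to M_m$ with $\theta_\lambda(1)\to\tau_m(1)=I_{M_m}$ in norm. To turn these into \emph{unital} normal maps on all of $\h$, set $\beta_\lambda=\min\{1,\|\theta_\lambda(1)\|^{-1}\}$, so $\beta_\lambda\theta_\lambda(1)\le P_m$ and $\beta_\lambda\to1$, and define
\[
\psi_{m,\lambda}(a)=\beta_\lambda\,\theta_\lambda(a)+\mu(a)\,\bigl(I_\h-\beta_\lambda\theta_\lambda(1)\bigr),\qquad a\in\A ,
\]
viewing $\theta_\lambda(a),\theta_\lambda(1)$ as operators on $\h$ supported on $P_m\h$. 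Each $\psi_{m,\lambda}$ is a normal UCP map from $\A$ to $\bh$, and $\psi_{m,\lambda}(a)\to\psi_m(a)$ in operator norm for every $a\in\A$.

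\emph{Assembling a sequence, and the obstacle.} Combining the two steps, each $\psi_m$ is a pointwise--norm limit of normal UCP maps and $\psi_m\to\phi$ pointwise in SOT, so a diagonal argument produces the sequence: pick a countable norm--dense subset $(a_j)$ of the unit ball of $\A$ and, for each $m$, a normal UCP map $\phi_m:=\psi_{m,\lambda(m)}$ with $\|\phi_m(a_j)-\psi_m(a_j)\|<1/m$ for $j\le m$; contractivity of UCP maps then gives $\phi_m(a)\to\phi(a)$ in SOT for \emph{every} $a\in\A$, and the final assertion (BW--density of $NS_\h(\A)$) follows a fortiori. I expect the main technical point to be precisely this last diagonalization: it requires the bounded--weak topology on the relevant sets of UCP maps to be metrizable, equivalently $\A$ to be norm--separable, so that a single countable dense set $(a_j)$ controls all of $\A$; this is where separability of $\A$ enters. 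In the absence of separability the construction still yields a \emph{net} of normal UCP maps converging to $\phi$ in BW-topology, which is what a Krein--Milman argument ultimately uses, and the passage to a genuine sequence should be understood subject to this caveat.
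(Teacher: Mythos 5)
Your argument is essentially the paper's: the authors also compress by an increasing sequence of finite-rank projections $P_n\to I_\h$ and patch on the complement with a fixed normal UCP map (they use $(1-P_n)\psi(\cdot)(1-P_n)$ for a normal UCP map $\psi$ where you use $\mu(\cdot)(I_\h-P_m)$ for a normal state $\mu$ --- both work), and they then invoke Brown--Ozawa, Corollary 1.6.3, for the finite-dimensional-target case, which is precisely the normal-state density argument you spell out via the correspondence between CP maps into $M_m$ and positive functionals on $\A\,\overline{\otimes}\,M_m$. Your explicit unitalization of the approximants $\theta_\lambda$ fills in a detail the paper leaves implicit, and it is handled correctly.

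The caveat you raise at the end is not a defect of your write-up; it points at a genuine imprecision in the proposition as stated. The finite-dimensional approximation is only by a net, and no diagonalization can repair this for a general von Neumann algebra, since an infinite-dimensional von Neumann algebra is never norm-separable. In fact the ``sequence'' version is false: take $\A=\ell^\infty(\N)$ and $\phi(a)=\omega(a)I_\h$ with $\omega$ a singular state. If normal UCP maps $\phi_n$ satisfied $\phi_n(a)\to\phi(a)$ in SOT for all $a$, then compressing by a unit vector would give a sequence of normal states converging weak$^*$ to $\omega$; since $\ell^\infty$ is a Grothendieck space, a weak$^*$-convergent sequence of functionals converges weakly, and the normal functionals form a norm-closed subspace (hence weakly closed), forcing $\omega$ to be normal --- a contradiction. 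So the statement should assert a net, i.e.\ exactly the final ``in particular'' clause (BW-density of $NS_\h(\A)$ in $S_\h(\A)$), which is all that is used in Theorem \ref{Krein-Milman type theorem}, so nothing downstream is affected. Under the norm-separability hypothesis you identify, your diagonal argument does recover an honest sequence.
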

\begin{proof}
If $\h$ is finite dimensional, then the assertion is proved in (Corollary 1.6.3, \cite{BrOz}). So assume that $\h$ is infinite dimensional.
Let $\{P_n\}_{n\geq1}$ be an increasing sequence of projections on $\h$ with finite dimensional ranges such that $P_n\to I_\h$ in SOT. Fix a normal UCP map $\psi:\A\to\bh$, and for each $n\geq1$, consider the map $\phi_n:\A\to\bh$ given by
\[\phi_n(a)=P_n\phi(a)P_n+(1-P_n)\psi(a)(1-P_n),~~\mbox{ for all } a\in\A.\]
Since $P_n\to I_\h$ in SOT, we note that $\phi_n(a)\to\phi(a)$ in SOT for all $a\in\A$. Also the second term in the
above sum is normal, as $\psi$ is normal. So it suffices to approximate the map
$P_n\phi(\cdot)P_n$ by normal completely positive maps. The problem now reduces to
approximation of (unital) completely positive maps by normal (unital) completely positive maps acting on finite
dimensional Hilbert spaces,  which is possible as already noted.
\end{proof}

We are now ready to show a  Krein-Milman type theorem for $\sha$ for the case when $\A$ is a separable $\cst$-algebra or a type $I$ factor, and $\h$ is an infinite dimensional Hilbert space.

\begin{theorem}\label{Krein-Milman type theorem}
Let $\A$ be a separable $C^*$-algebra or a type $I$ factor, and let $\h$ be a separable Hilbert space. Then
$\sha$ is BW-closure of $C^*$-convex hull of its $C^*$-extreme points.
\end{theorem}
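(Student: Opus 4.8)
The plan is to establish the non-trivial inclusion: every $\phi\in\sha$ lies in the set $\mathcal E$ defined as the BW-closure of the $C^\ast$-convex hull of the $C^\ast$-extreme points of $\sha$. (The reverse inclusion is immediate, since the hull is contained in the $C^\ast$-convex set $\sha$, which is BW-compact and hence BW-closed.) As $\mathcal E$ is by construction BW-closed, it suffices to approximate an arbitrary $\phi$ in BW-topology by maps that already belong to $\mathcal E$; and by Lemma \ref{sum of pure maps is in C*-convex hull} any map of the form $\sum_{n\ge1}\phi_n$ (WOT-sum) with each $\phi_n$ a pure completely positive map into $\bh$ is such a map. I first note that $\sha$ always possesses $C^\ast$-extreme points: for a pure state $\theta$ on $\A$ the map $\theta(\cdot)I_\h=\bigoplus_{n\ge1}\theta$ is $C^\ast$-extreme by Corollary \ref{direct sum of pure maps indexed by natural numbers give C*-extreme point}, so Lemma \ref{sum of pure maps is in C*-convex hull} is applicable.

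For the type $I$ factor case $\A=\bg$ the argument is short. By Proposition \ref{UCP maps are approximated by normal UCP maps} it is enough to treat a \emph{normal} UCP map $\phi:\bg\to\bh$, because $\mathcal E$ is BW-closed. By Corollary \ref{expression for normal map as a sum} we may write $\phi(X)=\sum_{n\ge1}V_n^\ast XV_n$ in SOT (hence in WOT) for operators $V_n\in\B(\h,\G)$, and then $\sum_n V_n^\ast V_n=\phi(I_\G)=I_\h$. Each map $X\mapsto V_n^\ast XV_n$ is pure: its minimal Stinespring triple is $(\mathrm{id}_{\bg},V_n,\G)$ (minimality holds since every nonzero vector of $\G$ is cyclic for $\bg$), and the identity representation of $\bg$ on $\G$ is irreducible. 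Lemma \ref{sum of pure maps is in C*-convex hull} now gives $\phi\in\mathcal E$.

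For a separable $C^\ast$-algebra $\A$, I would first reduce to finite-dimensional range. Fix an increasing sequence of finite-rank projections $P_N\uparrow I_\h$, a pure state $\theta$ on $\A$, put $\psi_0(a)=\theta(a)I_\h$, and set $\phi_N(a)=P_N\phi(a)P_N\oplus (I-P_N)\psi_0(a)(I-P_N)$. Then $\phi_N\in\sha$ and $\phi_N\to\phi$ in BW-topology. The second summand equals $\sum_{k>N}\theta(\cdot)P_{\C h_k}$ for an orthonormal basis $\{h_k\}$ of $\h$, a countable sum of pure CP maps; and the first summand $P_N\phi(\cdot)P_N$ is a \emph{unital} CP map into $\B(P_N\h)\cong M_{d_N}$ (since $P_N\phi(1)P_N=P_N$). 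So it remains to prove that every UCP map $\A\to M_n$ is a BW-limit of UCP maps that are finite sums of pure CP maps $\A\to M_n$. Granting this, each $\phi_N$ is a BW-limit of countable sums of pure CP maps into $\bh$ that lie in $\sha$, hence $\phi_N\in\mathcal E$ by Lemma \ref{sum of pure maps is in C*-convex hull}, and therefore $\phi=\lim_N\phi_N\in\mathcal E$.

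The heart of the matter, which I expect to be the main obstacle, is the claim above for maps into $M_n$. Here I would invoke the classical Krein-Milman theorem: the set of UCP maps $\A\to M_n$ is BW-compact and convex, so it is the BW-closed convex hull of its ordinary extreme points, and it suffices to show each such extreme point $\psi$ is a finite sum of pure CP maps. Let $(\pi,W,\hpi)$ be the minimal Stinespring triple of $\psi$. By Arveson's extreme point condition (Theorem \ref{Extreme point condition}), $T\mapsto W^\ast TW$ is injective on $\pi(\A)'$. Consequently $\pi(\A)'$ is finite-dimensional: otherwise it would contain an infinite family $\{E_j\}$ of nonzero mutually orthogonal projections, and injectivity together with orthogonality of the $E_j$ would force $\{W^\ast E_jW\}_j$ to be an infinite linearly independent subset of the $n^2$-dimensional space $M_n$, a contradiction. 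A finite-dimensional von Neumann algebra $\pi(\A)'$ forces $\pi$ to decompose as a finite direct sum $\bigoplus_k \pi_k^{\oplus r_k}$ of finite multiples of inequivalent irreducible representations, whence $\psi(a)=W^\ast\pi(a)W=\sum_{k,l}W_{k,l}^\ast\pi_k(a)W_{k,l}$ is a finite sum, each nonzero summand being a compression of an irreducible representation and hence pure CP. Since a convex combination of UCP maps that are finite sums of pure CP maps is again a UCP map of the same type, the classical Krein-Milman theorem yields the required approximation and completes the proof.
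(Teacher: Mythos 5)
Your proof is correct, and while your treatment of the type $I$ factor case coincides with the paper's (approximate by normal maps via Proposition \ref{UCP maps are approximated by normal UCP maps}, decompose via Corollary \ref{expression for normal map as a sum}, apply Lemma \ref{sum of pure maps is in C*-convex hull}), your argument for the separable case takes a genuinely different route. The paper invokes a corollary of Voiculescu's theorem to replace the minimal Stinespring representation $\pi$ (up to BW-approximation of $\phi$) by a direct sum of irreducibles, which immediately exhibits $\phi$ as a WOT-countable sum of pure CP maps and hands the problem to Lemma \ref{sum of pure maps is in C*-convex hull}; this is where separability of $\A$ enters, since Voiculescu's theorem requires $\hpi$ separable. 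You instead compress to finite-rank corners via $\phi_N=P_N\phi(\cdot)P_N\oplus\theta(\cdot)(I-P_N)$, reduce to UCP maps into $M_n$, and there use the \emph{classical} Krein--Milman theorem together with the observation (via Arveson's criterion, Theorem \ref{Extreme point condition}) that a linear extreme point of $S_{\C^n}(\A)$ has finite-dimensional commutant $\pi(\A)'$ and is therefore a finite sum of pure CP maps; Lemma \ref{sum of pure maps is in C*-convex hull} then finishes as before. I checked the supporting details — purity of the corner maps viewed inside $\bh$ (their minimal Stinespring representations are unchanged), purity of positive multiples and of the summands $\theta(\cdot)\lvert h_k\rangle\langle h_k\rvert$, and the decomposition $\pi\cong\oplus_k\pi_k^{\oplus r_k}$ forced by $\dim\pi(\A)'\leq n^2$ — and they all hold. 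What each approach buys: the paper's is shorter once Voiculescu is available; yours is more elementary and, strikingly, appears nowhere to use separability of $\A$ (classical Krein--Milman, BW-compactness of $S_{\C^n}(\A)$, Arveson's criterion, and Lemma \ref{sum of pure maps is in C*-convex hull} are all valid for arbitrary unital $\A$ when $\h$ is separable). Since the paper explicitly leaves the non-separable case open, you should double-check this point and, if it survives scrutiny, state the stronger conclusion — but I was unable to locate a gap.
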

\begin{proof}
\textit{Case I:} First assume that $\A$ is a separable $\cst$-algebra.
Let $\phi\in\sha$, and let $(\pi,V,\hpi)$ be its minimal Stinespring triple. Since both $\A$ and $\h$ are separable, the Hilbert space $\hpi$ is also separable. By a corollary of  Voiculescu's theorem (see
Theorem 42.1, \cite{Co}), there exists a sequence $\{U_n\}$ of unitaries on $\hpi$ and a representation $\rho:\A\to\B(\hpi)$ such that $\rho$ is a direct sum of irreducible representations and \begin{equation*}
    \pi(a)=\lim_{n\to\infty}U_n^*\rho(a)U_n~~\mbox{ in WOT},
\end{equation*}
for all $a\in\A$. Therefore if we set $W_n=U_nV$, then each $W_n$ is an isometry, and $\phi(a)=\lim_{n\to\infty}W_n^*\rho(a)W_n$ in WOT for all $a\in\A$. In other words, $\phi$ is approximated in BW-topology by UCP  maps, all of which are compression of the representation $\rho$ that is a direct sum of irreducible representations. Thus without loss of generality, we  assume that $\pi$  itself is a direct sum of a finite or countable irreducible representations, say,
\begin{equation}
    \pi=\oplus_{n\geq1}\pi_n,
\end{equation}
where  $\pi_n:\A\to\mathcal{B}(\K_n)$ is an irreducible representation on some Hilbert space $\K_n$. Now for each $n\geq1$, let $Q_n$ denote the projection of $\hpi$ onto $\K_n$, and let $V_n=Q_nV\in\mathcal{B}(\h,\K_n)$. Consider the completely positive map $\phi_n:\A\to\bh$  defined by $\phi_n(a)=V_n^*\pi_n(a)V_n$ for all $a\in\A$. Since $\pi_n$ is irreducible, each $\phi_n$ is a pure completely positive map. Also note that in WOT convergence, we have
\begin{equation*}
    \sum_{n\geq1}\phi_n(a)=\sum_{n\geq1}V^*Q_n\pi_n(a)Q_nV=V^*(\sum_{n\geq1}Q_n\pi_n(a)Q_n)V=V^*(\oplus_{n\geq1}\pi_n(a))V=V^*\pi(a)V=\phi(a),
\end{equation*}
for all $a\in\A$. The required assertion that $\phi$ is in BW-closure of $\cst$-convex hull of $\cst$-extreme points of $S_\h(\A)$ now follows from Lemma \ref{sum of pure maps is in C*-convex hull}.

\textit{Case II:} Let $\A$ be a type $I$ factor, say $\A=\bg$ for some Hilbert space $\G$. In view of Proposition \ref{UCP maps are approximated by normal UCP maps}, it  suffices to approximate a normal UCP map by $\cst$-convex combinations of $\cst$-extreme points of $ S_\h(\bg)$. Let $\phi:\bg\to\bh$ be a normal UCP map.  Then by Corollary \ref{expression for normal map as a sum}, there exists a finite or countable sequence of contractions $\{V_n\}_{n\geq1}$ in $\mathcal{B}(\h,\G)$ such that
\begin{equation}
    \phi(X)=\sum_{n\geq1}V_n^*XV_n\;\;\mbox{ for all }~ X\in\bg, \quad\text{(WOT Convergence)}.
\end{equation}
Note that the maps $X\mapsto V_n^*XV_n$ from $\bg$ to $\bh$ are pure maps. The claim now follows from Lemma \ref{sum of pure maps is in C*-convex hull}.
\end{proof}

\begin{remark}
In Case I of Theorem \ref{Krein-Milman type theorem} above, we have invoked a corollary of Voiculescu's result for representations on separable $\cst$-algebras acting on  separable Hilbert spaces. In recent years, there have been some study of Voiculescu's theorems beyond separable case by applications coming from logic to operator algebras (see Vaccaro \cite{Va}). The results of \cite{Va} are for separably acting representations on certain non-separable $\cst$-algebras, and are not directly applicable  in the current situation, as  the Hilbert space $\hpi$ in Case I of Theorem \ref{Krein-Milman type theorem} need not remain separable if $\A$ is not separable. Nevertheless, one can try to modify the proof above or look for  possible variations in Vaccaro's result  to extend our work beyond separable case.
\end{remark}

\section{Examples and applications}\label{section:examples and application}

In the final section, we discuss a number of examples of UCP maps with their $\cst$-extremity properties. We shall also see an application  to a well-known result from classical functional analysis about factorization property of Hardy algebras. We believe that the connection between $\cst$-extreme points and factorization property of the algebra $\M$ in Corollary \ref{a factorization property of algebras coming out of C* extreme point} will produce many more examples and applications.

First we look into the question of when tensor products of two $\cst$-extreme points are $\cst$-extreme. This will help us in producing more $\cst$-extreme points out of the existing ones. The tensor product in question is minimal tensor product. See \cite{Pa} for definitions and related properties.

For any two unital $\cst$-algebras $\A_1$ and $\A_2$, let $\A_1\otimes\A_2$ denote their minimal (or spatial) tensor product (when $\A_1,\A_2$ are von Neumann algebras, we denote by $\A_1\overline{\otimes}\A_2$ the von Neumann algebra generated by $\A_1\otimes\A_2$). Then for any two UCP maps $\phi_i:\A_i\to\B(\h_i)$, $i=1,2$, the assignment $a_1\otimes a_2\mapsto\phi_1(a_1)\otimes\phi_2(a_2)$, for $a_i\in\A_i$, extends to a UCP map from $\A_1\otimes\A_2$ to $\B(\h_1\otimes\h_2)$, which we denote by $\phi_1\otimes\phi_2$ (see Theorem 12.3, \cite{Pa}). The next proposition talks about $\cst$-extremity of tensor products, where one of the components is pure. We use the following well-known fact: if $\B_i\subseteq\B(\h_i), i=1,2,$ are two von Neumann algebras, then $(\B_1\otimes\B_2)'=\B_1'\overline{\otimes}\B_2'$ (Theorem IV.5.9, \cite{Ta}).

\begin{proposition}
Let $\phi_i:\A_i\to\B(\h_i)$, $i=1,2$, be two UCP maps, and let  $\phi_2$ be pure. Then $\phi_1$ is $\cst$-extreme (resp. extreme)  in $ S_{\h_1}(\A_1)$ if and only if  $\phi_1\otimes\phi_2$ is $\cst$-extreme (resp. extreme)  in $ S_{\h_1\otimes\h_2}(\A_1\otimes\A_2)$.
\end{proposition}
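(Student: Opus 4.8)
The plan is to write down the minimal Stinespring triple of $\phi_1\otimes\phi_2$ explicitly and then read off both equivalences directly from Corollary \ref{a C*-extreme criterian in language of factorization of positive operators} (for $\cst$-extremity) and from Arveson's extreme point condition Theorem \ref{Extreme point condition} (for extremity), each time reducing the statement for $\phi_1\otimes\phi_2$ to the same statement for $\phi_1$.

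First I would fix minimal Stinespring triples $(\pi_1,V_1,\h_{\pi_1})$ for $\phi_1$ and $(\pi_2,V_2,\h_{\pi_2})$ for $\phi_2$. Since $\phi_2$ is pure, $\pi_2$ is irreducible, so $\pi_2(\A_2)'=\C I_{\h_{\pi_2}}$, and since $\phi_2$ is unital $V_2$ is an isometry, so $V_2V_2^*$ is a nonzero projection on $\h_{\pi_2}$. A direct check on elementary tensors $a_1\otimes a_2$ (extended by linearity and norm-continuity) shows $(\phi_1\otimes\phi_2)(\cdot)=(V_1\otimes V_2)^*(\pi_1\otimes\pi_2)(\cdot)(V_1\otimes V_2)$, and since $[\pi_i(\A_i)V_i\h_i]=\h_{\pi_i}$ the minimality condition $[(\pi_1\otimes\pi_2)(\A_1\otimes\A_2)(V_1\otimes V_2)(\h_1\otimes\h_2)]=\h_{\pi_1}\otimes\h_{\pi_2}$ holds; hence $(\pi_1\otimes\pi_2,V_1\otimes V_2,\h_{\pi_1}\otimes\h_{\pi_2})$ is the minimal Stinespring triple of $\phi_1\otimes\phi_2$. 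Using the commutation theorem for von Neumann algebras (Theorem IV.5.9, \cite{Ta}) together with irreducibility of $\pi_2$ I would then identify
\[
(\pi_1\otimes\pi_2)(\A_1\otimes\A_2)'=\pi_1(\A_1)'\,\overline{\otimes}\,\pi_2(\A_2)'=\{T\otimes I_{\h_{\pi_2}}:T\in\pi_1(\A_1)'\}.
\]

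The core is the translation step. Set $W=V_1\otimes V_2$, so $WW^*=(V_1V_1^*)\otimes(V_2V_2^*)$. Given a positive $D=D_1\otimes I_{\h_{\pi_2}}$ in the commutant with $D_1\in\pi_1(\A_1)'$, one has $W^*DW=(V_1^*D_1V_1)\otimes I_{\h_2}$, invertible iff $V_1^*D_1V_1$ is. For a candidate $S=S_1\otimes I_{\h_{\pi_2}}$ with $S_1\in\pi_1(\A_1)'$, the three requirements of Corollary \ref{a C*-extreme criterian in language of factorization of positive operators}, namely $D=S^*S$, $SWW^*=WW^*SWW^*$, and $W^*SW$ invertible, become respectively $D_1=S_1^*S_1$, $(S_1V_1V_1^*)\otimes(V_2V_2^*)=(V_1V_1^*S_1V_1V_1^*)\otimes(V_2V_2^*)$, and $(V_1^*S_1V_1)\otimes I_{\h_2}$ invertible; using the elementary facts that $A\otimes C=B\otimes C$ with $C\neq0$ forces $A=B$ and that $A\otimes I=0$ iff $A=0$, these are exactly $D_1=S_1^*S_1$, $S_1V_1V_1^*=V_1V_1^*S_1V_1V_1^*$, and $V_1^*S_1V_1$ invertible. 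Hence the factorization criterion for $\phi_1\otimes\phi_2$ holds iff it holds for $\phi_1$, which is the $\cst$-extreme equivalence. For the extreme case, Arveson's condition (Theorem \ref{Extreme point condition}) for $\phi_1\otimes\phi_2$ is injectivity of $T\otimes I_{\h_{\pi_2}}\mapsto(V_1^*TV_1)\otimes I_{\h_2}$ on $\pi_1(\A_1)'$, which by the same remark is precisely injectivity of $T\mapsto V_1^*TV_1$, i.e.\ extremity of $\phi_1$.

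I do not expect a genuine obstacle: the only points needing care are verifying that $(\pi_1\otimes\pi_2,V_1\otimes V_2,\h_{\pi_1}\otimes\h_{\pi_2})$ is indeed the \emph{minimal} Stinespring triple for the \emph{minimal} tensor product (so that $\pi_1\otimes\pi_2$ is a bona fide representation of $\A_1\otimes\A_2$ on the Hilbert space tensor product), and pinning down the commutant, where irreducibility of $\pi_2$ is exactly what collapses its second tensor factor to scalars; the repeated appeals to ``$A\otimes C=B\otimes C$, $C\neq0\Rightarrow A=B$'' are routine.
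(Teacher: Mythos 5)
Your proposal is correct and follows essentially the same route as the paper's proof: identify $(\pi_1\otimes\pi_2, V_1\otimes V_2, \h_{\pi_1}\otimes\h_{\pi_2})$ as the minimal Stinespring triple, use irreducibility of $\pi_2$ and the commutation theorem to reduce the commutant to $\pi_1(\A_1)'\otimes I_{\h_{\pi_2}}$, and then translate the conditions of Corollary \ref{a C*-extreme criterian in language of factorization of positive operators} (resp.\ Theorem \ref{Extreme point condition}) factorwise. Your write-up is in fact slightly more explicit than the paper's on the minimality check and on the elementary tensor-cancellation steps, but the argument is the same.
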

\begin{proof}
Let $(\pi_i,V_i,\K_i)$ be the minimal Stinespring triple of $\phi_i$ for $i=1,2$. Then it is immediate that $(\pi_1\otimes\pi_2, V_1\otimes V_2,  \K_1\otimes\K_2)$ is the minimal Stinespring triple for $\phi_1\otimes\phi_2$.
Set $\pi=\pi_1\otimes\pi_2$.  Note that since $\pi_2(\A_2)'=\C\cdot I_{\K_2}$ (as $\phi_2$ is pure), it follows from above mentioned result that \[\pi(\A)'=(\pi_1(\A_1)\otimes\pi_2(\A_2))'=\pi_1(\A)'\overline{\otimes }I_{\K_2}=\pi_1(\A)'\otimes I_{\K_2}.\]
Now for any operator $D=D_1\otimes I_{\K_2}\in\pi(\A)'$, we note that $D_1$ is positive and $V_1^*D_1V_1$ is invertible if and only if $D_1\otimes I_{\K_2}$ is positive  and $(V_1\otimes V_2)^*(D_1\otimes I_{\K_2})(V_1\otimes V_2)$ is invertible. Also $D_1(V_1\h_1)\subseteq V_1\h_1$ if and only if $(D_1\otimes I_{\K_2})(V_1\otimes V_2)(\h_1\otimes \h_2)\subseteq (V_1\otimes V_2)(\h_1\otimes\h_2)$. The assertion about equivalence of $\cst$-extreme points now follows from equivalent criteria in Corollary \ref{a C*-extreme criterian in language of factorization of positive operators}. The assertions about extreme points follow similarly using Extreme point condition (Theorem \ref{Extreme point condition}).
\end{proof}

Since the identity representation $\id_n:M_n\to M_n$ is pure, the following corollary  about ampliation of a $\cst$-extreme map is immediate.

\begin{corollary}
Let $\phi$ be a $\cst$-extreme point in $\sha$. Then the map $\phi\otimes \id_{n}:\A\otimes M_n\to\B(\h\otimes \C^n)$ is $\cst$-extreme in $S_{\h\otimes\C^n}(\A\otimes M_n)$, for each $n\in\N$.
\end{corollary}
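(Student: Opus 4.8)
The plan is to obtain this as an immediate consequence of the preceding proposition, applied with $\A_1=\A$, $\phi_1=\phi$, $\A_2=M_n$, and $\phi_2=\id_n$, once we observe that the identity representation $\id_n\colon M_n\to M_n$ is a pure UCP map. Indeed, $\id_n$ is a unital $\ast$-homomorphism, hence UCP, and $(\id_n, I_{\C^n}, \C^n)$ is manifestly its minimal Stinespring triple. Since $\id_n(M_n)'=M_n'=\C\cdot I_{\C^n}$, the representation $\id_n$ is irreducible, and therefore $\id_n$ is pure, because a completely positive map is pure exactly when the representation in its minimal Stinespring triple is irreducible.

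With this observation, the preceding proposition applies verbatim and yields that $\phi\otimes\id_n$ is $\cst$-extreme in $S_{\h\otimes\C^n}(\A\otimes M_n)$ if and only if $\phi$ is $\cst$-extreme in $\sha$; the forward direction is precisely the assertion of the corollary. The remaining work is purely a matter of identifications: the minimal tensor product $\A\otimes M_n$ is the algebra $M_n(\A)$ of $n\times n$ matrices over $\A$, the Hilbert space $\h\otimes\C^n$ is $\h^{(n)}$, and under the canonical identification $\B(\h)\otimes M_n=\B(\h\otimes\C^n)$ the map $\phi\otimes\id_n$ is exactly the ampliation $[a_{ij}]\mapsto[\phi(a_{ij})]$.

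I expect no real obstacle here, as the statement is a formal corollary of the previous proposition. The single point that genuinely requires verification is the purity of $\id_n$, and that is the step I would write out in full (it is the one-line computation $\id_n(M_n)'=\C\cdot I_{\C^n}$ together with the pure-$\Leftrightarrow$-irreducible criterion); everything else I would dispatch by citing the proposition and recording the identifications above.
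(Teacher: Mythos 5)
Your proposal is correct and matches the paper's own argument exactly: the paper likewise derives this corollary from the preceding proposition by noting that the identity representation $\id_n\colon M_n\to M_n$ is pure (being irreducible, since $M_n'=\C\cdot I_{\C^n}$). Your extra care in writing out the minimal Stinespring triple of $\id_n$ and the standard identifications is fine but adds nothing beyond what the paper treats as immediate.
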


For the next proposition, we set up some notations. Let $X$ be a
countable set. For any Hilbert space $\h$ and a von Neumann algebra
$\B\subseteq\bh$, we  consider the Hilbert space $\ell^2_\h(X)$ and
von Neumann algebra $\ell^\infty_{\B}(X)$ given by
\[\ell^2_\h(X)=\{f:X\to\h; \Sigma_{x\in X}\|f(x)\|^2<\infty\},
~\mbox{ and }~\ell^\infty_\B(X)=\{F:X\to\B; F \mbox{ is
bounded}\}.\] Then $\ell^\infty_\B(X)$ acts on the Hilbert space
$\ell^2_\h(X)$ via the operator $M_F,F\in\ell^\infty_\B(X)$, defined
by 
\[M_Ff(x)=F(x)f(x),~ \mbox{ for } f\in\ell^2_\h(X)~\mbox{ and }~ x\in X.\]
We  write
$\ell^2_\C(X)$ and $\ell^\infty_\C(X)$ simply by $\ell^2(X)$ and
$\ell^\infty(X)$ respectively. Also we identify the Hilbert space
$\ell_\h^2(X)$ with $\ell^2(X)\otimes \h$, and the algebra
$\ell^\infty_\B(X)$ with $\ell^\infty(X)\overline{\otimes}\B$, so
that we shall use them interchangeably. If there is no possibility
of confusion, we shall drop $X$ from $\ell^2(X), \ell^2_\h(X)$ etc.

\begin{proposition}\label{tensor product of phi and i}
Let $\phi$ be a $\cst$-extreme point in $\sha$, and let  $i:\ell^\infty(X)\to\B(\ell^2(X))$ be the natural inclusion map for some countable set $X$. Then $i\otimes \phi$ is $\cst$-extreme  in $S_{\ell^2\otimes\h}(\ell^\infty\otimes\A)$.
\end{proposition}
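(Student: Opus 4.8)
The plan is to write down the minimal Stinespring triple of $i\otimes\phi$, identify its commutant explicitly, and then check the factorization criterion of Corollary \ref{a C*-extreme criterian in language of factorization of positive operators}. Let $(\pi,V,\hpi)$ be the minimal Stinespring triple of $\phi$. Since $i$ is itself a representation, I would first verify that $(i\otimes\pi,\ I_{\ell^2(X)}\otimes V,\ \ell^2(X)\otimes\hpi)$ is the minimal Stinespring triple of $i\otimes\phi$: indeed $(i\otimes\pi)(F\otimes a)=M_F\otimes\pi(a)$, so the closed span of $\{(M_F\otimes\pi(a))(I\otimes V)(\delta_y\otimes h)\}=\{(F(y)\delta_y)\otimes\pi(a)Vh\}$ is $\ell^2(X)\otimes[\pi(\A)V\h]=\ell^2(X)\otimes\hpi$. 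Identifying $\ell^2(X)\otimes\hpi$ with $\ell^2_{\hpi}(X)$ and writing a diagonal operator as $M_D$, $D=(D_x)_{x\in X}$, $(M_Df)(x)=D_xf(x)$, I would then use the commutation theorem for von Neumann algebra tensor products (Theorem IV.5.9, \cite{Ta}) together with the fact that $i(\ell^\infty(X))$ is a masa in $\B(\ell^2(X))$ to conclude
\[
(i\otimes\pi)\bigl(\ell^\infty(X)\otimes\A\bigr)'=\bigl\{\,M_D\ :\ D_x\in\copa\ \text{for all }x,\ \textstyle\sup_x\|D_x\|<\infty\,\bigr\}.
\]
Writing $W=I_{\ell^2(X)}\otimes V$, one has $WW^*=M_{(VV^*)_x}$ (the constant family) and $W^*M_DW=M_{(V^*D_xV)_x}$ on $\ell^2_{\h}(X)$.

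With this in hand I would verify the criterion of Corollary \ref{a C*-extreme criterian in language of factorization of positive operators}. Let $\mathbf{D}=M_D$ be a positive element of $(i\otimes\pi)(\ell^\infty(X)\otimes\A)'$ such that $W^*\mathbf{D}W=M_{(V^*D_xV)_x}$ is invertible; since a diagonal operator on $\bigoplus_x\h$ is invertible exactly when each block is invertible with uniformly bounded inverses, this means every $V^*D_xV$ is invertible and $\sup_x\|(V^*D_xV)^{-1}\|<\infty$. For each fixed $x\in X$, the $\cst$-extremity of $\phi$ applied through Corollary \ref{a C*-extreme criterian in language of factorization of positive operators}, together with the last bullet of Observation \ref{observation}, produces $S_x\in\copa$ with $D_x=S_x^*S_x$, $S_xVV^*=VV^*S_xVV^*$, $V^*S_xV$ invertible, and the norm identity $\|(V^*S_xV)^{-1}\|^2=\|(V^*D_xV)^{-1}\|$. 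Since $\|S_x\|^2=\|S_x^*S_x\|=\|D_x\|\le\sup_x\|D_x\|<\infty$, the family $S=(S_x)_x$ defines a bounded operator $\mathbf{S}=M_S$ lying in $(i\otimes\pi)(\ell^\infty(X)\otimes\A)'$. Coordinatewise one reads off $\mathbf{S}^*\mathbf{S}=M_{(S_x^*S_x)_x}=\mathbf{D}$; since $WW^*$ is the constant family, the identities $S_xVV^*=VV^*S_xVV^*$ give $\mathbf{S}WW^*=WW^*\mathbf{S}WW^*$; and $W^*\mathbf{S}W=M_{(V^*S_xV)_x}$ is invertible because $\sup_x\|(V^*S_xV)^{-1}\|^2=\sup_x\|(V^*D_xV)^{-1}\|<\infty$. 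Corollary \ref{a C*-extreme criterian in language of factorization of positive operators} then gives that $i\otimes\phi$ is $\cst$-extreme in $S_{\ell^2\otimes\h}(\ell^\infty\otimes\A)$.

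The step I expect to be the main obstacle --- and the reason the argument needs the sharpened form of the extremity condition --- is the uniform control of the quantities $\|(V^*S_xV)^{-1}\|$ over $x\in X$. A bare invocation of the $\cst$-extremity of $\phi$ only guarantees, for each individual $x$, that $V^*S_xV$ is invertible, with no a priori bound as $x$ varies; without such a bound the diagonal operator $W^*\mathbf{S}W$ need not be invertible and the criterion of Corollary \ref{a C*-extreme criterian in language of factorization of positive operators} would fail to apply. This is exactly what the equality $\|(V^*S_xV)^{-1}\|^2=\|(V^*D_xV)^{-1}\|$ from Observation \ref{observation} (which in turn comes from the explicit choice $S=UD^{1/2}$ in the proof of Theorem \ref{Farenick Zhou criterion}) is designed to provide, while the bound $\|S_x\|^2=\|D_x\|$ secures boundedness of $\mathbf{S}$; the remaining verifications are routine coordinatewise bookkeeping on $\ell^2_{\hpi}(X)$.
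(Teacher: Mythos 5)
Your proposal is correct and follows essentially the same route as the paper: identify the minimal Stinespring triple of $i\otimes\phi$ as $(i\otimes\pi,\,I_{\ell^2}\otimes V,\,\ell^2\otimes\hpi)$, compute the commutant as the bounded diagonal families with entries in $\copa$, reduce invertibility of $W^*M_DW$ to uniform invertibility of the blocks $V^*D_xV$, and then apply Corollary \ref{a C*-extreme criterian in language of factorization of positive operators} coordinatewise, using the norm identity $\|(V^*S_xV)^{-1}\|^2=\|(V^*D_xV)^{-1}\|$ from Observation \ref{observation} to secure the uniform bound. You have also correctly isolated the one genuinely delicate point (the uniform control of $\|(V^*S_xV)^{-1}\|$), which is exactly where the paper's proof leans on the same observation.
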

\begin{proof}
Let $(\pi, V,\hpi)$ be the minimal Stinespring triple for $\phi$. Then
$(\rho, U,\h_{\rho})$ is the minimal Stinespring triple for $i\otimes \phi$, where $\h_{\rho}=\ell^2\otimes\hpi=\ell^2_{\hpi}$, $U=i\otimes V:\ell^2\otimes\h\to\ell^2\otimes\hpi$, and $\rho=i\otimes\pi$. As mentioned above, we have
\[\rho(\ell^\infty\otimes\A)'=(\ell^\infty\otimes\pi(\A))'=\ell^\infty\overline{\otimes}\copa=\ell^\infty_{\copa}.\]
Now let $M_D\in \ell^\infty_{\copa}$ be a positive operator such that $ U ^*M_D U $ is invertible. Then there exists $ \alpha  >0$  such that $ U ^*M_D U \geq\alpha U^*U$.  Note that for any $f\in \ell^2_{\h}$ and $x\in X$, we have
\begin{equation*}
     U ^*M_D U f(x)=(V^*D(x)V)f(x).
\end{equation*}
Therefore for any unit vectors  $g\in \ell^2$ and $h\in\h$, we have
\begin{equation*}
    \alpha\leq \langle U^*M_DU(g\otimes h), g\otimes h\rangle=\sum_{x\in X}\langle (V^*D(x)V)g(x)h, g(x)h\rangle =\sum_{x\in X} \langle (V^*D(x)V)h, h\rangle \;|g(x)|^2,
\end{equation*}
 and since $g\in \ell^2$ varies over all unit vectors, it follows (by choosing $g$ to be the canonical basis elements of $\ell^2$) that $\langle(V^*D(x)V)h,h\rangle\geq \alpha$ for all $x\in X.$  Again since $h\in \h$ is arbitrary, it follows that $V^*D(x)V\geq \alpha$ for all $x\in X$, i.e. $V^*D(x)V$ is invertible in $\bh$. Since $\phi$ is $\cst$-extreme in $\sha$,  there exists  an operator  $S(x)\in\pi(\A)'$ for each $x\in X$, such that $D(x)=S(x)^*S(x)$, $S(x)VV^*=VV^*S(x)VV^*$ and $V^*S(x)V$ is invertible. Also note  that 
 \[\|(V^*S(x)V)^{-1}\|^2=\|(V^*D(x)V)^{-1}\|\leq 1/\alpha.\]
 If $S$ denotes the map $x\mapsto S(x)$ from $X$ to $\copa$, then it is immediate to verify that $S\in\ell^\infty_{\copa}$ such that $M_D={M_S}^*M_S$ and $M_SUU^*=UU^*M_SUU^*$. Also since $\sup_{x\in X}\|(V^*S(x)V)^{-1}\|^2\leq 1/\alpha$, it follows that $ U ^*M_S U $ is invertible. Since $M_D$ is arbitrary, we conclude that  $i\otimes\phi$ is $\cst$-extreme.
\end{proof}

If the set $X$ in Proposition \ref{tensor product of phi and i} is a two point set, then we get the following:

\begin{corollary}
Let $\phi$ be a $\cst$-extreme point in $\sha$. Then the map $\psi:\A\oplus\A\to\B(\h\oplus\h)$ defined by $\psi(a\oplus b)=\phi(a)\oplus\phi(b)$, for all $a,b\in\A$, is a $\cst$-extreme point in $S_{\h\oplus\h}(\A\oplus\A)$.
\end{corollary}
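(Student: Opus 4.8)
The plan is to obtain this as the special case $X=\{1,2\}$ of Proposition \ref{tensor product of phi and i}. First I would record the relevant identifications. With $X$ the two-point set, $\ell^2(X)\cong\C^2$ and $\ell^\infty(X)\cong\C\oplus\C$, and the natural inclusion $i:\ell^\infty(X)\to\B(\ell^2(X))$ sends a pair $(\lambda_1,\lambda_2)$ to the diagonal operator on $\C^2$ with those entries. Since $\ell^\infty(X)$ is finite dimensional, the minimal tensor product $\ell^\infty(X)\otimes\A$ is just $\A\oplus\A$ as a $\cst$-algebra, and $\ell^2(X)\otimes\h$ is just $\h\oplus\h$ as a Hilbert space; under these identifications the $j$-th coordinate function $e_j$ of $X$ corresponds to the element of $\A\oplus\A$ (resp.\ the diagonal projection in $\B(\h\oplus\h)$) supported in the $j$-th summand.

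Next I would check that $i\otimes\phi$ is precisely the map $\psi$ under these identifications. Evaluating on elementary tensors, $(i\otimes\phi)(e_j\otimes a)=i(e_j)\otimes\phi(a)$, which in the $\h\oplus\h$ picture is the operator with $\phi(a)$ in the $j$-th diagonal block and $0$ elsewhere; summing the $j=1$ and $j=2$ contributions gives $(i\otimes\phi)(a\oplus b)=\phi(a)\oplus\phi(b)=\psi(a\oplus b)$. Since $\phi$ is $\cst$-extreme in $\sha$, Proposition \ref{tensor product of phi and i} says $i\otimes\phi$ is $\cst$-extreme in $S_{\ell^2(X)\otimes\h}(\ell^\infty(X)\otimes\A)$, which is exactly $S_{\h\oplus\h}(\A\oplus\A)$; hence so is $\psi$.

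There is no genuine obstacle here: the only points needing a line of justification are that the minimal tensor product with the finite-dimensional algebra $\ell^\infty(X)$ really collapses to the direct sum $\A\oplus\A$, and that the inclusion $i$ above is literally the map appearing in Proposition \ref{tensor product of phi and i}. Alternatively, one could give a self-contained argument: if $(\pi,V,\hpi)$ is the minimal Stinespring triple of $\phi$, then $(\pi\oplus\pi,\,V\oplus V,\,\hpi\oplus\hpi)$ is the minimal Stinespring triple of $\psi$, one computes $(\pi\oplus\pi)(\A\oplus\A)'=\copa\oplus\copa$, and then applies the factorization criterion of Corollary \ref{a C*-extreme criterian in language of factorization of positive operators} componentwise (the finite direct sum causes no boundedness issue); but invoking Proposition \ref{tensor product of phi and i} is shorter.
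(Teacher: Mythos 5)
Your proposal is correct and follows exactly the paper's route: the paper obtains this corollary precisely as the special case of Proposition \ref{tensor product of phi and i} where $X$ is a two-point set, with the same identifications $\ell^\infty(X)\otimes\A\cong\A\oplus\A$ and $\ell^2(X)\otimes\h\cong\h\oplus\h$ that you spell out. The extra verifications you include (and the alternative self-contained argument you sketch) are fine but not needed beyond what the paper leaves implicit.
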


 The next proposition provides a family of $\cst$-extreme points, which can be thought as a generalization of Example 2 in \cite{FaMo}, and whose proof follows almost the same lines. We give the proof for the sake of completeness. For doing so, we need the following fact from $\cst$-convexity of unit ball of $\bh$ (see  \cite{HoMoPa} for definitions and Theorem 1.1 therein):  all isometries and co-isometries are  $\cst$-extreme points of closed unit ball of $\bh$.

 We also use the  following assertion which is easy to verify  (also see Theorem 3.18, \cite{Pa}): if $(\pi,V,\hpi)$ is the minimal Stinespring triple for a UCP   map $\phi\in\sha$, then for any $a\in\A$, $\phi(a)^*\phi(a)=\phi(a^*a)$ if and only if $V\phi(a)=\pi(a)V$.

 Below, $C^*(T)$ denotes the unital $\cst$-algebra generated by an operator $T$.

\begin{proposition}\label{isometries gives C*-extreme}
Let $S$ be a unitary, and let $\phi:C^*(S)\to\bh$ be a UCP map such that $\phi(S)$ is an isometry or a co-isometry. Then $\phi$ is $\cst$-extreme as well as extreme in $S_{\h}(C^*(S))$.
\end{proposition}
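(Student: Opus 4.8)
The plan is to reduce the proof to two facts recalled just before the statement: an isometry or co-isometry is an extreme, indeed a $\cst$-extreme, point of the closed unit ball $\ball(\bh)$ \cite{HoMoPa}; and a UCP map on $C^*(S)$ sending $S$ to an isometry is forced to be ``multiplicative on the powers of $S$'' through the Stinespring criterion $\phi(a)^*\phi(a)=\phi(a^*a)\iff V\phi(a)=\pi(a)V$ \cite{Pa}. The idea is to push any convex (resp. $\cst$-convex) decomposition of $\phi$ forward to the single operator $\phi(S)$, and then recover equality (resp. unitary equivalence) of the constituent maps from a rigidity property of $\phi$.

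The first step is this rigidity lemma: a UCP map $\psi:C^*(S)\to\bh$ with $\psi(S)$ an isometry or a co-isometry is completely determined by the value $\psi(S)$. I would assume $\psi(S)$ is an isometry; the co-isometry case reduces to this on replacing $S$ by $S^*$, since $C^*(S)=C^*(S^*)$ and $\psi(S^*)=\psi(S)^*$. Let $(\rho,W,\K)$ be the minimal Stinespring triple of $\psi$. As $S$ is unitary, $\psi(S)^*\psi(S)=I_\h=\psi(I)=\psi(S^*S)$, so \cite{Pa} gives $W\psi(S)=\rho(S)W$; an induction then yields $\rho(S)^nW=W\psi(S)^n$, whence $\psi(S^n)=W^*\rho(S)^nW=W^*W\psi(S)^n=\psi(S)^n$ for every $n\geq 0$, and by taking adjoints $\psi(S^{-n})=\psi(S)^{*n}$. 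Since $S$ is normal with spectrum contained in $\T$, the linear span of $\{S^n:n\in\mathbb{Z}\}$ is dense in $C^*(S)$, so continuity of $\psi$ shows $\psi$ is determined by $\psi(S)$. Consequently, two UCP maps on $C^*(S)$ that agree on $S$, with common value an isometry or co-isometry, coincide.

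Extremity then follows quickly: if $\phi=\sum_i\lambda_i\psi_i$ is a convex combination in $S_\h(C^*(S))$, then $\phi(S)=\sum_i\lambda_i\psi_i(S)$ is a convex combination of contractions equal to $\phi(S)$, and since $\phi(S)$ is an isometry or co-isometry it is an extreme point of $\ball(\bh)$; hence $\psi_i(S)=\phi(S)$ for all $i$, and the rigidity lemma gives $\psi_i=\phi$. For $\cst$-extremity, let $\phi(\cdot)=\sum_iT_i^*\phi_i(\cdot)T_i$ be a proper $\cst$-convex combination. Evaluating at $S$ and using that each $\phi_i(S)$ is a contraction, $\phi(S)=\sum_iT_i^*\phi_i(S)T_i$ exhibits $\phi(S)$ as a proper $\cst$-convex combination inside $\ball(\bh)$; as $\phi(S)$ is an isometry or co-isometry, it is a $\cst$-extreme point of $\ball(\bh)$ by \cite{HoMoPa}, so there are unitaries $U_i\in\bh$ with $\phi_i(S)=U_i^*\phi(S)U_i$, which is again an isometry or co-isometry. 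Now $\phi_i$ and $U_i^*\phi(\cdot)U_i$ are UCP maps on $C^*(S)$ agreeing on $S$, so the rigidity lemma gives $\phi_i=U_i^*\phi(\cdot)U_i$; thus $\phi$ is $\cst$-extreme.

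The one delicate point is the rigidity lemma: one must check that the Stinespring criterion of \cite{Pa} actually applies (this is precisely where unitarity of $S$ is used) and that it propagates from $S$ to all powers $S^n$, and one must dispose of the co-isometry case via the symmetry $C^*(S)=C^*(S^*)$. Everything else is a routine transfer of the (ordinary and $\cst$-) extremity of isometries and co-isometries in $\ball(\bh)$ along the evaluation map $\psi\mapsto\psi(S)$.
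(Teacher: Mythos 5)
Your proposal is correct and follows essentially the same route as the paper: the Stinespring/multiplicative-domain identity $V\phi(S)=\pi(S)V$ giving $\phi(S^n)=\phi(S)^n$, the $C^\ast$-extremity (resp.\ extremity) of isometries and co-isometries in the unit ball from \cite{HoMoPa}, and density of $\Span\{S^n,{S^*}^m\}$ in $C^*(S)$. The only cosmetic difference is that you package the determination of a UCP map by its value at $S$ as a separate rigidity lemma (and reduce the co-isometry case via $S\mapsto S^*$), whereas the paper carries out the same computation inline.
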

\begin{proof}
We assume that  $\phi(S)$ is an  isometry. The case of  $\phi(S)$ a co-isometry follows similarly. Let $(\pi,V,\hpi)$ be the minimal Stinespring triple for $\phi$. Since $\phi(S)$ is an isometry, we have $\phi(S)^*\phi(S)=I_\h=\phi(1)=\phi(  S^*  S)$,  so  it follows (as mentioned above) that $V\phi(S)= \pi(S)V.$ This in particular implies for each $n\in\mathbb{N}$ that $V\phi(S)^n= \pi(S)^nV$, which yields
\begin{equation}\label{phi^n(S)=phi(S^n)}
     \phi(S)^n=V^*\pi(S)^nV=V^*\pi(S^n)V=\phi(S^n).
\end{equation}
Now to prove that $\phi$ is $\cst$-extreme in $S_\h(C^*(S))$, let $\phi=\sum_{i=1}^nT_i^*\phi_i(\cdot)T_i$ be a proper $C^*$-convex combination for some UCP maps $\phi_i$ and invertible operators $T_i\in\bh$ with $\sum_{i=1}^nT_i^*T_i=I_\h$. Since  $\phi(S)$ is an isometry, it is a $C^*$-extreme point in the closed unit ball of $\bh$ (Theorem 1.1, \cite{HoMoPa}); hence  there exist unitaries $U_i\in\bh$ satisfying
\[\phi(S)=U_i^*\phi_i(S)U_i\]
for each $i$.
This implies that each $\phi_i(S)$ is an isometry, and in a similar fashion as in \eqref{phi^n(S)=phi(S^n)}, we get
\begin{equation}
    \phi_i(S)^n=\phi_i(S^n)~~\mbox{ for all }n\in\mathbb{N}.
\end{equation}
Thus for each $n\in\mathbb{N}$, we have
\begin{equation*}
    \phi(S^n)=\phi(S)^n=(U_i^*\phi_i(S)U_i)^n=U_i^*\phi_i(S)^nU_i=U_i^*\phi_i(S^n)U_i.
\end{equation*}
By taking adjoint both the sides, we also get  $\phi({S^*}^n)=U_i^*\phi_i({S^*}^n)U_i.$ Since $S$ is unitary, it follows that $\overline{\Span}\{S^n, {S^*}^m; n,m\in\N\} =C^*(S)$. Thus we conclude that
 $\phi(T)=U_i^*\phi_i(T)U_i$ for every $T\in C^*(S)$ i.e. $\phi$ is unitarily equivalent to $\phi_i$. The case of $\phi$ being extreme follows on similar lines, as isometries and co-isometries are extreme points of the closed unit ball of $\bh$.
\end{proof}

As a special case of Proposition \ref{isometries gives C*-extreme}, we have the following result.
Here $z\in C(\T)$ is the function on the unit circle $\T$  given by $z(e^{i\theta})=e^{i\theta}$ for $\theta\in\R$.

\begin{corollary}\label{isometry or coisometry on CT gives C*-extreme points}
Let $\phi:C(\mathbb{T})\to \bh$ be a UCP map such that $\phi(z)$ is an isometry or a co-isometry. Then $\phi$ is  $C^*$-extreme  as well as  extreme  in $S_\h(C(\T))$.
\end{corollary}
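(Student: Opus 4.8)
**Proof plan for Corollary \ref{isometry or coisometry on CT gives C*-extreme points}.**

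The plan is to deduce this corollary directly from Proposition \ref{isometries gives C*-extreme} by taking $S$ to be the appropriate unitary generator. The key observation is that $C(\T)$ is singly generated as a $\cst$-algebra: indeed $C(\T) = \cst(z)$, where $z$ is the coordinate function $z(e^{i\theta}) = e^{i\theta}$. This is because $z$ is a unitary element of $C(\T)$ (as $|z| \equiv 1$ on $\T$), and the unital $\cst$-algebra it generates contains all trigonometric polynomials $\Span\{z^n, \bar z^m : n,m \in \N\}$, which is dense in $C(\T)$ by the Stone--Weierstrass theorem. Hence $\cst(z) = C(\T)$.

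First I would note that $z$ is a unitary in $C(\T)$. Then, applying Proposition \ref{isometries gives C*-extreme} with $S = z$ (so that $\cst(S) = C(\T)$), any UCP map $\phi : C(\T) \to \bh$ for which $\phi(z)$ is an isometry or co-isometry is automatically $\cst$-extreme as well as extreme in $S_\h(C(\T))$. That is literally the statement of the corollary, so essentially nothing remains to be done beyond recording the identification $\cst(z) = C(\T)$.

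There is no real obstacle here; the only point requiring a sentence of justification is that $z$ generates $C(\T)$ as a unital $\cst$-algebra, which follows from Stone--Weierstrass as indicated above. Everything else is an immediate specialization of the preceding proposition, so the proof can be stated in two or three lines.
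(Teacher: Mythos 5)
Your proposal is correct and is exactly the paper's route: the corollary is stated there as an immediate special case of Proposition \ref{isometries gives C*-extreme}, obtained by taking $S=z$ and noting that $z$ is a unitary generating $C(\T)$ as a unital $C^*$-algebra. Your added remark that $C^*(z)=C(\T)$ follows from Stone--Weierstrass is the only justification needed, and it matches the (implicit) argument in the paper.
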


As an application of Corollary \ref{isometry or coisometry on CT gives C*-extreme points}, we give a new and simplified proof of a  classical result of Szeg\"o and its operator valued  analogue about factorization property of Hardy algebras. Let $\K$ be a Hilbert space (possibly infinite dimensional), and let $L^2_\K(\T)$ denote the Hilbert space of  $\K$-valued square integrable functions on $\T$ with respect to one-dimensional Lebesgue measure (which is isomorphic to $L^2(\T)\otimes\K$). Let $ H^2_\K(\T)$ denote the subspace 
\[\{f\in L^2_\K(\T); \int_0^{2\pi}f(e^{i\theta})e^{-in\theta}d\theta=0\mbox{ for all }n<0\}\]
    of $L^2_\K(\T)$  (called {\em vector-valued Hardy space}). Let $L^\infty_{\B(\K)}(\T)$ be the von Neumann algebra of all essentially bounded measurable functions from $\T$ to $\mathcal{B}(\K)$,  which acts on $L^2_\K(\T)$ by left multiplication i.e. for $F\in L^\infty_{\bk}(\T)$, the operator $M_F:L^2_\K(\T)\to L^2_\K(\T)$ is defined by 
\[M_Ff(x)=F(x)f(x)\quad\mbox{ for all}~~ f\in L^2_\K(\T), x\in\T.\]
Let $H^\infty_{\B(\K)}(\T)$ be its subalgebra defined by
\[H^\infty_{\B(\K)}(\T)=\{F\in  L^\infty_{\B(\K)}(\T);\int_0^{2\pi}F(e^{i\theta})e^{-in\theta}d\theta=0 \mbox{ for all }n<0\}.\]
The algebra $H^\infty_{\bk}(\T)$ is called the {\em operator-valued Hardy algebra}. Note that $C(\T)\subseteq L^\infty(\T)\subseteq L^\infty_{\bk}(\T)$. We have the following factorization property of $H^\infty_{\bk}(\T)$ in $L^\infty_{\bk}(\T)$.

\begin{corollary}\label{factorization property of hardy algebra}
For any positive and invertible $D\in L^\infty_{\mathcal{B}(\K)}(\T)$, there exists an invertible $S$ with $S, S^{-1}\in H^\infty_{\B(\K)}(\T)$ such that $D=S^*S$.
\end{corollary}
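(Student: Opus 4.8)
The plan is to deduce this from Corollary \ref{isometry or coisometry on CT gives C*-extreme points} via the bridge between $\cst$-extremity and factorization provided by Corollary \ref{a factorization property of algebras coming out of C* extreme point}. First I would set up the right UCP map: consider the natural compression map $\phi:C(\T)\to\B(H^2_\K(\T))$ given by $\phi(f)=P_{H^2_\K(\T)}M_f{}_{|_{H^2_\K(\T)}}$, where we view $f\in C(\T)$ as the $\B(\K)$-valued function $f\cdot I_\K$. Then $\phi(z)$ is the vector-valued unilateral shift on $H^2_\K(\T)$, which is an isometry, so by Corollary \ref{isometry or coisometry on CT gives C*-extreme points} the map $\phi$ is $\cst$-extreme in $S_{H^2_\K(\T)}(C(\T))$.

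Next I would identify the minimal Stinespring triple of $\phi$ and the associated commutant. Exactly as in Example \ref{invertibility cannot be dropped}, $\phi$ is a compression of the representation $\pi:C(\T)\to\B(L^2_\K(\T))$, $\pi(f)=M_{f\cdot I_\K}$, with isometry $V$ the inclusion of $H^2_\K(\T)$ into $L^2_\K(\T)$; minimality holds because $[\pi(C(\T))V H^2_\K(\T)]$ contains all trigonometric-polynomial multiples of $H^2_\K$, which is dense in $L^2_\K(\T)$. The commutant $\pi(C(\T))'$ is exactly $L^\infty_{\B(\K)}(\T)$ acting by multiplication (since $\{M_f\cdot I_\K:f\in C(\T)\}$ generates $L^\infty(\T)\otimes\C I_\K$ as a von Neumann algebra, whose commutant in $\B(L^2(\T)\otimes\K)$ is $L^\infty(\T)\,\overline\otimes\,\B(\K)=L^\infty_{\B(\K)}(\T)$). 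Under this identification, a positive invertible $D\in\pi(C(\T))'$ is precisely a positive invertible $D\in L^\infty_{\B(\K)}(\T)$, and the algebra $\M=\{T\in\pi(C(\T))';\,TVV^*=VV^*TVV^*\}$ of \eqref{eq:algebra with factorization} is exactly $\{M_F:F\in L^\infty_{\B(\K)}(\T),\ M_F(H^2_\K)\subseteq H^2_\K\}$. A standard Fourier-coefficient argument shows $M_F(H^2_\K)\subseteq H^2_\K$ if and only if $F\in H^\infty_{\B(\K)}(\T)$, so $\M\cong H^\infty_{\B(\K)}(\T)$.

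Finally I would invoke Corollary \ref{a factorization property of algebras coming out of C* extreme point}: since $\phi$ is $\cst$-extreme, the algebra $\M$ has factorization in $\pi(C(\T))'=L^\infty_{\B(\K)}(\T)$. Unwinding the identifications, this says precisely that for every positive invertible $D\in L^\infty_{\B(\K)}(\T)$ there is an invertible $S$ with $S,S^{-1}\in H^\infty_{\B(\K)}(\T)$ and $D=S^*S$, which is the claim. The main obstacle — really the only non-formal point — is verifying the multiplier description $\{F\in L^\infty_{\B(\K)}(\T):M_F(H^2_\K)\subseteq H^2_\K\}=H^\infty_{\B(\K)}(\T)$ and confirming that the separability hypothesis in Corollary \ref{a factorization property of algebras coming out of C* extreme point} (inherited from the running convention that Hilbert spaces are separable) is not an issue: one should note that the factorization conclusion only ever uses Larson's theorem on the separable nest-algebra side, and for general $\K$ one can reduce to the separable case or simply observe that the cited results go through, since $H^2_\K(\T)$ may be taken separable when $\K$ is. I would remark that when $\K=\C$ this recovers the classical Szeg\H{o} factorization theorem.
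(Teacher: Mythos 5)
Your proposal is correct and follows essentially the same route as the paper: the same UCP map $\phi(f)=P_{H^2_\K(\T)}{M_f}_{|_{H^2_\K(\T)}}$, the same appeal to Corollary \ref{isometry or coisometry on CT gives C*-extreme points} via the isometry $\phi(z)$, the same identification of $\pi(C(\T))'$ with $L^\infty_{\B(\K)}(\T)$ and of the algebra $\M$ with $H^\infty_{\B(\K)}(\T)$, and the same final invocation of Corollary \ref{a factorization property of algebras coming out of C* extreme point}. The extra details you supply (minimality of the dilation, the multiplier description, the separability remark) are all points the paper either asserts without proof or leaves implicit, and they check out.
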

\begin{proof}
Consider the UCP map $\phi:C(\T)\to \B( H^2_\K(\T))$ defined by
\begin{equation}\label{UCP map on hardy space}
    \phi(f)=P_{ H^2_\K(\T)}{M_f}_{|_{ H^2_\K(\T)}},~\mbox{ for all }f\in C(\T).
\end{equation}
Clearly $\phi(z)$ is an isometry, so it follows from  Corollary \ref{isometry or coisometry on CT gives C*-extreme points} that $\phi$ is a $\cst$-extreme point in $ S_{ H^2_\K(\T)}(C(\T))$. Note that the map $\phi$ is already in minimal Stinespring form, where the representation $\pi$ acts on the Hilbert space $L^2_\K(\T)$ by $\pi(f)=M_f$, for all $f\in C(\T)$. It is well-known that  $\pi(C(\T))'=L^\infty_{\B(\K)}(\T)$ (Theorem 52.8, \cite{Co}), and it is easy to verify that
\[ H^\infty_{\B(\K)}(\T)=\{F\in L^\infty_{\B(\K)}(\T); M_F( H^2_\K(\T))\subseteq H^2_\K(\T)\}.\]
The required assertion now follows from Corollary \ref{a factorization property of algebras coming out of C* extreme point}.
\end{proof}

\begin{example}
Let $T\in\bh$ be an isometry or a co-isometry. Consider the linear map $\phi:C(\T)\to\bh$ satisfying $\phi(p+\bar{q})=p(T)+q(T)^*$ for polynomials $p$ and $q$. Then $\phi$ extends to a UCP map on $C(\T)$ (Theorem 2.6, \cite{Pa}), and it follows from Corollary \ref{isometry or coisometry on CT gives C*-extreme points} that $\phi$ is $\cst$-extreme as well as extreme in $S_{\h}(C(\T))$.
\end{example}

Following is an example $\phi$ of a $\cst$-extreme map of $S_\h(C(\T))$ such that $\phi(z)$ need not be an isometry or a co-isometry.

\begin{example}
Let $g:\T\to\T$ be a homeomorphism, and let $\phi:C(\T)\to\bh$ be a UCP  map. Set $\psi:C(\T)\to\bh$ by $\psi(f)=\phi(f\circ g)$ for  all $f\in C(\T)$. Then it is easy to verify that $\phi$ is $\cst$-extreme in $S_\h(C(\T))$ if and only if $\psi$ is $\cst$-extreme in $S_\h(C(\T))$. Moreover one can  choose a homeomorphism $f$ such that $\phi(z)$ is an isometry but $\psi(z)$ is neither an isometry nor a co-isometry.
\end{example}

The following are two examples of  (normal) UCP maps which are not  $\cst$-extreme points. In order to show this, we use the fact that  nest algebras associated with uncountable complete nests  do not have factorization.

\begin{example}\label{example of non C*-extreme point using indexing set of rational}
Let $\K$ be a Hilbert space, and let $\{\K_q\}_{q\in\Q}$ be a nest of subspaces indexed by rationals $\Q$ such that  $\K_q\subsetneq\K_{q'}$  if $q< q'$, and $\K=\vee_{q\in\Q}\K_q$ . Let $\G$ be a Hilbert space, and let $\{\G_q\}_{q\in\Q}$ be any collection of mutually orthogonal subspaces of $\G$. Consider the subspace $\h=\oplus_{q\in\Q}\G_q\otimes\K_q$ of $\G\otimes\K$, and  the map $\phi:\bg\to\bh$ defined by \[\phi(X)=P_\h(X\otimes I_\K)_{|_{\h}},~~\mbox{ for all } X\in\bg.\]
Note that the algebra $\M=\{T\in\bk; (I_\G\otimes T)(\h)\subseteq\h\}$ is nothing but $\Alg\E$, where $\E$ is the nest $\E=\{\K_q\}_{q\in\Q}$. Even though the nest $\E$ is  countable, its completion is not a countable nest (indeed, completion of $\E$ is given by $\{0, \K, \K_q, \L_r; q\in\Q, r\in\R\}$ where $\L_r=\bigvee_{p<r}\K_p$); so it follows from Theorem \ref{Larson's factorization result} that $\M$ does not have factorization in $\bk$. Consequently, $I_\G\otimes\M$ does not have factorization in $I_\G\otimes\bk=\copa$, where $\pi(X)=X\otimes I_\K$ is the minimal Stinespring representation of $\phi$. Thus we conclude from Corollary \ref{a factorization property of algebras coming out of C* extreme point} that $\phi$ is not a $\cst$-extreme point in $ S_\h(\bg)$.
\end{example}

\begin{example}
Let $\K=L^2([0,1])$ with respect to Lebesgue measure, and let $\h=\{\chi_\Delta f; f\in L^2([0,1]\times [0,1])\}\subseteq \K\otimes\K$, where $\Delta=\{(s,t); s,t\in[0,1], 0\leq s\leq t\leq1\}\subseteq[0,1]\times[0,1]$. Here $\chi_\Delta$ denotes the characteristic function on the set $\Delta$. Define $\phi:\bk\to\bh$ by
\begin{equation*}
    \phi(X)=P_\h(X\otimes I_\K)_{|_{\h}}~~\mbox{for all }X\in\bk.
\end{equation*}
We claim that $\phi$ is not a $\cst$-extreme point in $S_\h(\bk)$. First consider the following observations, which  are straightforward to verify:
\begin{itemize}
    \item $\h=\ospan\{\chi_{[0,t]}f\otimes\chi_{[t,1]}g; t\in [0,1], f,g\in\K\}$.

\item $\h^\perp=\ospan\{\chi_{[s,1]}f\otimes \chi_{[0,s]}g; s\in [0,1], f,g\in\K\}$.

\item $\K\otimes\K=\ospan\{(X\otimes I_\K )h; h\in \h,X\in\bk\}$.

\item $\phi(X)=P_\h\pi(X)_{|_\h}$ is the minimal Stinespring dilation for $\phi$ where $\pi:\bk\to\B(\K\otimes\K)$ is defined by $\pi(X)=X\otimes I_\K $, $X\in\bk$.

\item $\pi(\bk)'=\{I_\K \otimes S; S\in\bk\}$.
\end{itemize}
Let $\M=\{ S\in\bk; (I_\K\otimes S)(\h)\subseteq\h\}$. We claim that $\M\subseteq  \Alg\E$, for the complete nest
$\E=\{E_t; t\in [0,1]\}$, where
\[E_t=\{\chi_{[t,1]}f; f\in \K\}, \mbox{ for } t\in [0,1].\]
 Since  $\E$ is uncountable, it will follow from Theorem \ref{Larson's factorization result} that $\Alg\E$ does not have factorization in $\bk$; hence  $\M$ does not have factorization in $\bk$, that is, $I_\K\otimes\M$ does not have factorization in $I_\K\otimes\bk=\pi(\bk)'$.  This will imply from Corollary \ref{a factorization property of algebras coming out of C* extreme point} that $\phi$ is not $\cst$-extreme in $S_\h(\bk)$.

Now let $S\in\M$, so that $(I_\K \otimes S)(\h)\subseteq \h$.
Fix $t\in (0,1]$, and let $0<s<t$. Note that $E_s^\perp=\{\chi_{[0,s]}f; f\in \K\}$. Now for any $f,g\in \K$, we note from above observations that $ \chi_{[0,t]}\otimes \chi_{[t,1]}g\in \h$ (so that $(I_\K \otimes S)( \chi_{[0,t]}\otimes \chi_{[t,1]}g)\in \h$) and $\chi_{[s,1]}\otimes \chi_{[0,s]}f\in \h^\perp$; hence
\begin{equation*}
\begin{split}
    0&=\left\langle (I_\K \otimes S)( \chi_{[0,t]}\otimes \chi_{[t,1]}g), \chi_{[s,1]}\otimes \chi_{[0,s]}f\right\rangle=\left\langle  \chi_{[0,t]}\otimes S(\chi_{[t,1]}g), \chi_{[s,1]}\otimes \chi_{[0,s]}f\right\rangle\\
    &=\langle \chi_{[0,t]}, \chi_{[s,1]}\rangle \langle S(\chi_{[t,1]}g), \chi_{[0,s]}f\rangle= (t-s)\langle S(\chi_{[t,1]}g), \chi_{[0,s]}f\rangle.
\end{split}
\end{equation*}
Since $t-s\neq0$, it follows that $\langle S(\chi_{[t,1]}g), \chi_{[0,s]}f\rangle=0.$ This shows that $S(\chi_{[t,1]}g)\perp E_s^\perp$, which is to say $S(\chi_{[t,1]}g)\in E_s$. Since $g\in \K$ is arbitrary, it follows that  $S( E_t )\subseteq E_s$. Since $s<t$ is arbitrary, we conclude that
\begin{equation*}
    S( E_t )\subseteq \bigcap_{0<s<t}E_s= E_t .
\end{equation*}
This shows  that $S\in\Alg\E$; thus we conclude our claim that $\M\subseteq\Alg\E$.
\end{example}

Inspired from the example of $\cst$-extreme point as in \eqref{eq:Hardy space example}, we now consider its noncommutative analogue.
For a $\cst$-subalgebra $\A$ of $\bk$ and a subspace $\h$ of $\K$, consider the UCP map $\phi:\A\to\bh$ given by \[\phi(X)=P_\h X_{|_{\h}},\quad\mbox{for}~~ X\in\A.\]
If $\A=\bk$, then clearly $\phi$ is a pure map, so that $\phi$ is $\cst$-extreme in $\sha$. An example of $\cst$-extreme point of this form (when $\A\neq \bk$) is the map in \eqref{UCP map on hardy space}. But for arbitrary $\A$, we do not know if $\phi$ is always $\cst$-extreme in $ S_\h(\A)$.

Let $\A$ be a finite von Neumann algebra with a distinguished
faithful trace $\tau$. Let $L^2(\tau)$ denote the  Hilbert space
induced by $\tau$, which is the  closure of $\A$ with respect to the
inner product on $\A$ defined by $\langle x,y\rangle=\tau(x^*y)$ for
$x,y\in\A$. Then the left regular representation
$\pi:\A\to\B(L^2(\tau))$ defined by $\pi(x)=L_x$ for all $x\in\A$, is
cyclic with cyclic vector $\delta=1$,  where $L_x:L^2(\tau)\to
L^2(\tau)$ is  given by
\[L_x(y)=xy,\quad\mbox{ for all }~y\in\A.\]
Now let $\M$
be a subalgebra of $\A$ such that $\M$ has factorization in $\A$ (as
defined in \ref{definition of algebras with factorization}).
Examples of such algebras are finite maximal subdiagonal algebras
introduced by Arveson \cite{Ar67}, which also include nest
subalgebras. Consider the subspace $ H^2=[\M]\subseteq L^2(\tau)$
(called {\em noncommutative Hardy space}), and let $\phi:\A\to\B( H^2)$ be
the map defined by
\[\phi(x)=P_{ H^2}{L_x}_{|_{ H^2}},\]
for $x\in\A$.  It is clear that $\phi$ is a UCP map. We have the following:

\begin{proposition}
For $\A,\M$ and $\phi$  as above,  $\phi$ is a $\cst$-extreme point in $ S_{ H^2}(\A)$.
\end{proposition}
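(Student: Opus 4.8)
The plan is to verify the factorization criterion for $\cst$-extremity from Corollary~\ref{a C*-extreme criterian in language of factorization of positive operators}. The first task is to pin down the minimal Stinespring triple of $\phi$. Applying factorization to $D=1$ produces a unitary $S\in\M$ with $S^{-1}\in\M$, so $1=SS^{-1}\in\M$, hence $1\in\M\subseteq H^2$. With $\pi=L$ the left regular representation and $V\colon H^2\hookrightarrow L^2(\tau)$ the inclusion isometry, one has $\phi(x)=V^*\pi(x)V$ and $[\pi(\A)VH^2]\supseteq[L_\A 1]=[\A]=L^2(\tau)$, so $(\pi,V,L^2(\tau))$ is the minimal Stinespring triple. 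The commutant $\pi(\A)'$ is the right von Neumann algebra $R_\A=\{R_y:y\in\A\}$, where $R_y\xi=\xi y$; I will use the standard identities $R_y^{*}=R_{y^{*}}$, $R_aR_b=R_{ba}$, and the fact that $y\mapsto R_y$ is a $*$-isomorphism onto $\pi(\A)'$, so $R_d\ge 0$ iff $d\ge 0$ in $\A$, and $R_s$ is invertible iff $s$ is.

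The reduction step goes as follows. Fix a positive $D\in\pi(\A)'$ with $V^{*}DV$ invertible and write $D=R_d$ with $d\ge 0$. Granting for the moment the key assertion that $d$ is invertible in $\A$, Proposition~\ref{M* has factorization} (factorization of $\M^{*}$) gives an invertible $s$ with $s,s^{-1}\in\M$ and $d=ss^{*}$. Put $S=R_s\in\pi(\A)'$. Then $S^{*}S=R_{s^{*}}R_s=R_{ss^{*}}=R_d=D$; since $s\in\M$ we get $R_s[\M]\subseteq[\M s]\subseteq[\M]=H^2$, i.e.\ $S(VH^2)\subseteq VH^2$, equivalently $SVV^{*}=VV^{*}SVV^{*}$; and $R_{s^{-1}}$ also preserves $H^2$ while $R_sR_{s^{-1}}=R_{s^{-1}s}=I=R_{s^{-1}}R_s$, so $R_s|_{H^2}$ is invertible, i.e.\ $V^{*}SV$ is invertible. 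By Corollary~\ref{a C*-extreme criterian in language of factorization of positive operators} this shows $\phi$ is $\cst$-extreme in $S_{H^2}(\A)$.

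It remains to prove the key assertion, which I expect to be the only real obstacle: if $V^{*}R_dV$ is invertible on $H^2$, then $d$ is invertible in $\A$. I would argue by contradiction. If $V^{*}R_dV\ge\beta I_{H^2}$ with $\beta>0$, then for $m\in\M\subseteq H^2$ one has $\langle R_dm,m\rangle_{L^2(\tau)}=\tau(m^{*}md)$ and $\|m\|_{L^2(\tau)}^{2}=\tau(m^{*}m)$, so $\tau(m^{*}md)\ge\beta\,\tau(m^{*}m)$ for all $m\in\M$. Suppose $d$ is not invertible, so $0\in\sigma(d)$. The idea is to force $m^{*}m$ to be spectrally concentrated where $d$ is small: for each $k\in\N$, apply factorization of $\M^{*}$ to the positive invertible element $(d+1)^{k}$ to get $s_k\in\M$ with $s_k^{-1}\in\M$ and $(d+1)^{k}=s_ks_k^{*}$, and set $m_k=s_k^{-1}\in\M$, so $m_k^{*}m_k=(d+1)^{-k}$. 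Writing $\mu$ for the spectral distribution of $d$ (that is, $\mu(B)=\tau(\chi_B(d))$), the ratio $\tau(m_k^{*}m_kd)/\tau(m_k^{*}m_k)=\tau\big(d(d+1)^{-k}\big)/\tau\big((d+1)^{-k}\big)$ is the $\mu$-barycentre of $\lambda$ weighted by $(\lambda+1)^{-k}$. Since $0\in\operatorname{supp}\mu$, picking $\delta\in(0,\beta)$, splitting the integrals over $[0,\delta]$ and $[\delta,\|d\|]$, and using $\mu([0,\delta/2])>0$ together with $\big((1+\delta/2)/(1+\delta)\big)^{k}\to 0$ shows this ratio is $<\beta$ for all large $k$, contradicting $\tau(m_k^{*}m_kd)\ge\beta\,\tau(m_k^{*}m_k)$. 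Hence $d$ is invertible, completing the proof. The remaining items are routine: the exact commutant identification, the $R$-algebra identities, and the elementary spectral-measure estimate.
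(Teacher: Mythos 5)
Your proof is correct, and its overall skeleton coincides with the paper's: identify the minimal Stinespring triple as the inclusion $V\colon H^2\hookrightarrow L^2(\tau)$ with $\pi(\A)'=\{R_y;\,y\in\A\}$, reduce everything to showing that a positive $d\in\A$ with $P_{H^2}{R_d}|_{H^2}$ invertible is itself invertible, and then feed the factorization $d=ss^*$ (with $s,s^{-1}\in\M$) into Corollary \ref{a C*-extreme criterian in language of factorization of positive operators} via $S=R_s$. The one step where you genuinely diverge is the invertibility of $d$. The paper argues directly: from $\tau((d-\alpha)z^*z)\geq 0$ for all $z\in\M$ and the fact that $\{z^*z;\,z\in\M\}$ is dense in the positive cone of $\A$ (every positive \emph{invertible} element is exactly of this form by factorization), one gets $\tau((d-\alpha)y)\geq0$ for all $y\geq0$, and the trace property then converts this into $\tau(a^*(d-\alpha)a)\geq 0$ for all $a$, i.e.\ $d\geq\alpha$. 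You instead argue by contradiction, using factorization only to manufacture the specific test elements $m_k=s_k^{-1}$ with $m_k^*m_k=(d+1)^{-k}$, and then a spectral-concentration estimate to drive the Rayleigh quotient $\tau(m_k^*m_k d)/\tau(m_k^*m_k)$ below $\beta$ when $0\in\sigma(d)$. Both are valid; the paper's route is shorter and yields the sharper conclusion $d\geq\alpha$ outright, while yours avoids the density step at the cost of the functional-calculus estimate (where, as you note, faithfulness of $\tau$ is what guarantees $\mu([0,\delta/2])>0$). Your preliminary observation that $1\in\M$, which justifies the minimality $[\pi(\A)VH^2]=L^2(\tau)$, is a detail the paper leaves implicit and is a welcome addition.
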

\begin{proof}
Note that  $(\pi,V,L^2(\tau))$ is the minimal Stinespring triple, where $V$ is the inclusion map from $H^2$ to $L^2(\tau)$. It is a well-known fact that $\pi(\A)'=\{R_x; x\in\A\}$ (see Proposition 11.16, \cite{Pi}), where $R_x\in\B(L^2(\tau))$ is the right multiplication operator defined by $R_x(y)=yx$ for all $y\in\A$.

Now to show that $\phi$ is $\cst$-extreme in $S_{H^2}(\A)$, we let $R_x$ to be a positive operator in $\pi(\A)'$ for some $x\in\A$ such that $P_{ H^2}{R_x}_{|{ H^2}}$ is invertible. Clearly $x\geq 0$ in $\A$. We claim that $x$ is invertible in $\A$. Since $P_{ H^2}{R_x}_{|_{ H^2}}$ is invertible, there is an $\alpha>0$  such that $P_{ H^2}{R_x}_{|_{ H^2}}\geq \alpha I_{ H^2}$. Hence for all $z\in\M$, we have  $\langle zx,z\rangle=\langle R_xz,z\rangle\geq \alpha\langle z,z\rangle$, that is, $\tau((x-\alpha)z^*z)=\langle z(x-\alpha),z\rangle\geq 0$. Since $\{z^*z;z\in\M\}$ is dense in the set  of all positive elements of $\A$ (as $\M$ has factorization in $\A$), it follows that $\tau((x-\alpha)y)\geq0$, for all $y\geq 0$ in $\A$. Hence for all $a\in\A$, we get using the trace property of $\tau$ that
\[\langle (x-\alpha)a,a\rangle=\tau(a^*(x-\alpha)a)=\tau((x-\alpha)aa^*)\geq0,\]
which is to say that $x-\alpha\geq0$ in $\A$. This shows that $x$ is invertible. Therefore by factorization of $\M$ in $\A$, there exists an invertible element $z$ with $z,z^{-1}\in\M$ such that $x=zz^*$; thus $R_x=R_{zz^*}=R_{z^*}R_z=R_z^*R_z$. Further, since $z\in\M$, it follows that $R_z(\M)\subseteq\M$ and hence $R_z( H^2)\subseteq H^2$. Also since $z^{-1}\in\M$, we have $R_z^{-1}( H^2)=R_{z^{-1}}( H^2)\subseteq H^2$, which in particular implies that ${R_z}_{|_{ H^2}}$ is invertible. Since $R_x$ is arbitrary in $\copa$, we conclude that $\phi$ is a $\cst$-extreme point in $ S_{ H^2}(\A)$.
\end{proof}

\begin{acknowledgment*}
The first author thanks J C Bose Fellowship (SERB, India) for financial support. We sincerely thank the referee for several constructive suggestions which helped us to improve the paper.
\end{acknowledgment*}

\end{document}